\DeclareSymbolFontAlphabet{\mathbb}{AMSb}
\DeclareSymbolFontAlphabet{\mathbbl}{bbold}
\newcommand{\prism}{{\mathlarger{\mathbbl{\Delta}}}}
\newcommand{\BG}{{\mathbb{G}}}
\DeclareMathOperator{\fin}{{fin}}
\DeclareMathOperator{\qcqs}{{qcqs}}
\DeclareMathOperator{\WCart}{{WCart}}
\DeclareMathOperator{\Frac}{{Frac}}
\DeclareMathOperator{\FrmSch}{{FrmSch}}
\newcommand{\FSch}{{\FrmSch^\flat}}
\newcommand{\qFSch}{{\FrmSch^\flat_{\qcqs}}}
\DeclareMathOperator{\Isoc}{{Isoc}}
\DeclareMathOperator{\perf}{{perf}}
\DeclareMathOperator{\Tot}{{Tot}}
\newcommand{\HHom}{\underline{\on{Hom}}}
\DeclareMathOperator{\cris}{{cris}}
\DeclareMathOperator{\Cris}{{Cris}}
\newcommand{\QCoh}{{\on{QCoh}}}
\newcommand{\cA}{{\mathcal A}}
\newcommand{\cB}{{\mathcal B}}
\newcommand{\cF}{{\mathcal F}}
\newcommand{\cO}{{\mathcal O}}
\newcommand{\cP}{{\mathcal P}}
\newcommand{\sB}{{\mathscr B}}
\newcommand{\sF}{{\mathscr F}}
\newcommand{\sG}{{\mathscr G}}
\newcommand{\sR}{{\mathscr R}}
\newcommand{\sX}{{\mathscr X}}
\newcommand{\sY}{{\mathscr Y}}
\newcommand{\sZ}{{\mathscr Z}}
\newcommand{\nc}{\newcommand}
\nc\wh{\widehat}
\nc\on{\operatorname}
\nc\Gr{\on{Gr}}
\nc\Fl{\on{Fl}}
\newtheorem{cor}[subsubsection]{Corollary}
\newtheorem{lem}[subsubsection]{Lemma}
\newtheorem{prop}[subsubsection]{Proposition}
\newtheorem{thm}[subsubsection]{Theorem}
\theoremstyle{remark}
\newtheorem{rem}[subsubsection]{Remark}
\newcommand{\BF}{{\mathbb{F}}}
\newcommand{\BN}{{\mathbb{N}}}
\newcommand{\BQ}{{\mathbb{Q}}}
\newcommand{\BZ}{{\mathbb{Z}}}
\DeclareMathOperator{\Bun}{{Bun}}
 \DeclareMathOperator{\Spf}{{Spf}}
\newcommand{\limto}{{\displaystyle\lim_{\longrightarrow}}}
\newcommand{\rightlim}{\mathop{\limto}}
\newcommand{\leftlim}{\mathop{\displaystyle\lim_{\longleftarrow}}}
\newcommand{\limfromn}{\leftlim\limits_{\raise3pt\hbox{$n$}}}
\newcommand{\limton}{\rightlim\limits_{\raise3pt\hbox{$n$}}}
\newcommand{\rightlimit}[1]{\mathop{\lim\limits_{\longrightarrow}}\limits%
                    _{\raise3pt\hbox{$\scriptstyle #1$}}}
\newcommand{\leftlimit}[1]{\mathop{\lim\limits_{\longleftarrow}}\limits%
                    _{\raise3pt\hbox{$\scriptstyle #1$}}}
\newcommand{\epi}{\twoheadrightarrow}
\newcommand{\iso}{\buildrel{\sim}\over{\longrightarrow}}
\newcommand{\mono}{\hookrightarrow}
\DeclareMathOperator{\Coker}{{Coker}}
\DeclareMathOperator{\End}{{End}}
\DeclareMathOperator{\Hom}{{Hom}}
\DeclareMathOperator{\Ker}{{Ker}} \DeclareMathOperator{\id}{{id}}
\DeclareMathOperator{\Spec}{{Spec}}
\theoremstyle{definition}
\newtheorem{defin}[subsubsection]{Definition}
\newtheorem{ex}[subsubsection]{Example}
\numberwithin{equation}{section}
\newcommand{\Fr}{\operatorname{Fr}}
\newcommand{\Acris}{{\cA}}
\newcommand{\Font}{{\cF}}
\newcommand{\Pow}{{\cP}}
\begin{document}
\title[A stacky approach to crystals]{A stacky approach to crystals}
\author{Vladimir Drinfeld}
\address{University of Chicago, Department of Mathematics, Chicago, IL 60637, USA}
\email{drinfeld@uchicago.edu}
\dedicatory{To Professor C.~N.~Yang with deepest admiration}

\begin{abstract}
Inspired by a theorem of Bhatt-Morrow-Scholze, we develop a stacky approach to crystals and isocrystals on ``Frobenius-smooth" schemes over $\BF_p\,$. This class of schemes goes back to Berthelot-Messing and contains all smooth schemes over perfect fields of characteristic $p$. 

To treat isocrystals, we prove some descent theorems for sheaves of Banachian modules, which could be interesting in their own right.
\end{abstract}

\keywords{$F$-isocrystal, slope filtration, Tannakian category, formal group, semiperfect ring}
\subjclass[2010]{Primary 14F30}

\maketitle

\section{Introduction}   \label{s:2Introduction}

Fix a prime $p$. 

\subsection{A theorem of Bhatt-Morrow-Scholze}
Let $X$ be a smooth scheme over a perfect field $k$ of characteristic $p$. Let $X_{\perf}$ be the perfection of $X$, i.e.,
\[
X_{\perf}:=\underset{\longleftarrow}\lim (\ldots\overset{\Fr}\longrightarrow X\overset{\Fr}\longrightarrow X\overset{\Fr}\longrightarrow X).
\]
Let $W(X_{\perf})$ be the $p$-adic formal scheme whose underlying topological space is that of $X_{\perf}$ (or of $X$) and whose structure sheaf is obtained by applying to 
$\cO_{X_{\perf}}$ the functor of $p$-typical Witt vectors. Theorem~1.10 of \cite{BMS} can be reformulated as a canonical isomorphism
\[
R\Gamma_{\cris} (X,\cO )=R\Gamma (W(X_{\perf})/\sG ,\cO ),
\]
where $\sG$ is a certain flat affine groupoid (in the category of $p$-adic formal schemes) acting on $W(X_{\perf})$, and $W(X_{\perf})/\sG$ is the quotient stack\footnote{The stacks $(W({X_{\perf})}/\sG)\otimes_{\BZ_p}(\BZ/p^r\BZ)$ are usually \emph{not} Artin stacks 
because the two canonical morphisms $\sG\to W({X_{\perf})}$ usually have infinite type.}.
If $X$ is affine this gives an explicit complex computing $R\Gamma_{\cris} (X,\cO )$ (the complex is constructed using the nerve of $\sG$).

One can define $\sG$ to be the unique groupoid acting on $W(X_{\perf})$ such that the morphism $\sG\to W(X_{\perf})\times W(X_{\perf})$ is obtained by taking the divided power envelope of  
the ideal of the closed subscheme 
$$X_{\perf}\times_X X_{\perf}\subset W(X_{\perf})\times W(X_{\perf}).$$
We prefer to define the groupoid $\sG$ by describing its nerve $\Acris_{\bullet}$ using Fontaine's func\-tor~$A_{\cris}$ (see \S\ref{sss:Acris}, in which we follow \cite{BMS}).

\subsection{A generalization}
Our first main result is Theorem~\ref{t:crystals via stacks}. It says that if $X$ is as above then a crystal of quasi-coherent $\cO$-modules on the absolute crystalline site of $X$ is the same as an $\cO$-module $M$ on the stack $W(X_{\perf})/\sG$, and the complex $R\Gamma (W(X_{\perf})/\sG ,M)$ identifies with the cohomology of the corresponding crystal. 

We think of crystalline cohomology not in terms of the de Rham complex but in terms of the bigger and more tautological \v {C}ech-Alexander complex. This approach makes Theorem~\ref{t:crystals via stacks} almost obvious.

Following Berthelot-Messing \cite[\S 1]{BM}, we allow $X$ to be any $\BF_p$-scheme $X$ which is \emph{Frobenius-smooth} in the sense of \S\ref{ss:morally smooth}. E.g., if $k$ is a perfect $\BF_p$-algebra then $\Spec k[[x_1,\ldots ,x_n]]$ or any smooth scheme over $\Spec k$ is allowed.

\begin{rem}   \label{r:prismatization}
As explained in Appendix~\ref{appendix}, for any Frobenius-smooth $\BF_p$-scheme $X$, the stack $W(X_{\perf})/\sG$ canonically identifies with the \emph{prismatization} of $X$. The notion of prismatization is not used in the main body of this article (in fact, it did not exist when the original version of the article was written).
\end{rem}

\subsection{Isocrystals}
Now let $X$ be a Noetherian $\BF_p$-scheme. For such schemes Frobenius-smoothness is equivalent to $\Omega^1_X$ being locally free and coherent (see \S\ref{ss:noetherian morally smooth}); let us assume this. E.g., if $k$ is a perfect field of characteristic $p$ then $\Spec k[[x_1,\ldots ,x_n]]$ or any smooth scheme over $\Spec k$ is allowed. 

\subsubsection{What we mean by an isocrystal}
Consider the category of crystals of finitely generated quasi-coherent $\cO$-modules on the absolute crystalline site of $X$. Tensoring it by $\BQ$, one gets a category denoted by $\Isoc (X)$, whose objects are called \emph{isocrystals}. (Thus our isocrystals are not necessarily convergent in the sense of \cite{O}.)

\subsubsection{The result on isocrystals}
Our second main result (Theorem~\ref{t:isocrystals via stacks}) provides a canonical equivalence
\begin{equation}  
\Isoc (X)\iso\Bun_{\BQ}(W(X_{\perf})/\sG ),
\end{equation}
where the right-hand side is, so to say, the category of vector bundles on $(W(X_{\perf})/\sG )\otimes_{\BZ_p}\BQ_p$ (more details are explained in \S\ref{sss:2Bun_Q} below).

\subsubsection{``Banachian games" and $\Bun_\BQ\,$}   \label{sss:2Bun_Q}
For any $\BZ_p$-flat $p$-adic formal scheme\footnote{See \S\ref{sss:formal stacks} for the precise meaning of these words. Note that $\sY$ does not have to be Noetherian.} $\sY$, let $\Bun_{\BQ}(\sY )$ denote the category of finitely generated locally projective $(\cO_{\sY}\otimes\BQ )$-modules. We prove that the assignment $\sY\mapsto\Bun_{\BQ}(\sY )$ satisfies fpqc descent (see Proposition~\ref{p:descent for Bun_Q}). Because of this, the meaning of $\Bun_{\BQ}(W(X_{\perf})/\sG)$ is clear.

More generally, in \S\ref{ss:Banachian games} we prove fpqc descent for the category of sheaves of Banachian\footnote{By a Banachian space over $\BQ_p$ we mean a complete topological vector space such that the topology comes from a non-Archimedean norm.} $(\cO_{\sY}\otimes\BQ)$-modules, denoted by $\QCoh^\flat (\sY )\otimes\BQ$ (see Corollary~\ref{c:descent for QCohQ}).

\subsection{Acknowledgements}
I thank A.~Beilinson, P.~Berthelot, B.~Bhatt, M.~D'Addezio, A.~Mathew, A.~Ogus, N.~Rozenblyum, and M.~Temkin for valuable advice and references.
The author's research was partially supported by NSF grants DMS-1303100 and DMS-2001425.

\section{Crystals and crystalline cohomology} \label{s:Crystals and crystalline}
\subsection{A class of schemes}  \label{ss:morally smooth}

\begin{lem} \label{l:morally smooth}
The following properties of an $\BF_p$-scheme $X$  are equivalent:

(i) the Frobenius morphism $\Fr:X\to X$ is syntomic (i.e., flat, of finite presentation, and with each fiber being a locally complete intersection);

(ii) every point of $X$ has a neighborhood isomorphic to $\Spec B$ for some $\BF_p$-algebra $B$ with a finite $p$-basis in the sense of \cite[Def.~1.1.1]{BM} (i.e., there exist $x_1,\ldots , x_d\in B$ such that every element of $B$ can be written uniquely as $\sum\limits_{\alpha\in J}b_\alpha^px^\alpha$, $b_\alpha\in B$, where $J:=\{0,1,\ldots ,p-1\}^d$ and for $\alpha=(\alpha_1,\ldots ,\alpha_d)\in J$ one sets $x^\alpha:=x_1^{\alpha_1}\cdot\ldots\cdot x_d^{\alpha_d}$). 
\end{lem}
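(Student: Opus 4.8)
The plan is to reduce to the affine, local situation and then analyze the Frobenius as the ring endomorphism $\varphi\colon B\to B$, $b\mapsto b^p$, of a commutative $\BF_p$-algebra $B$. Both properties are local on $X$ (syntomicity is local on source and target, and (ii) is manifestly a condition on neighborhoods), so it suffices to treat $X=\Spec B$ and to compare the two properties in a neighborhood of a fixed point. Throughout I would exploit that $\varphi$ factors as $B\twoheadrightarrow B^p\hookrightarrow B$, where $B^p\subseteq B$ is the subring of $p$-th powers, and that the uniqueness clause in (ii) forces $\varphi$ to be injective (if $b^p=0$ then $(b,0,\dots ,0)$ and $(0,\dots ,0)$ would be two expansions of $0$), so that $\varphi$ identifies $B$ with $B^p$ and a finite $p$-basis makes $B$ a free $B^p$-module of rank $p^d$ on the monomials $x^\alpha$, $\alpha\in J$.

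For (ii) $\Rightarrow$ (i) I would turn the $p$-basis into an explicit presentation. The relations $x_i^p\in B^p$ together with freeness give $B\cong B^p[T_1,\dots ,T_d]/(T_1^p-x_1^p,\dots ,T_d^p-x_d^p)$ as $B^p$-algebras, with $T_i\mapsto x_i$. Transporting the $B^p$-structure back to $B$ along $\varphi\colon B\iso B^p$ exhibits the Frobenius as the map $B\to B[T_1,\dots ,T_d]/(T_1^p-x_1,\dots ,T_d^p-x_d)$. This target is free over $B$ with basis $\{T^\alpha:\alpha\in J\}$, so $\varphi$ is flat; it is visibly of finite presentation (this is exactly where finiteness of the $p$-basis enters); and each fibre is $\kappa(\mathfrak p)[T_1,\dots ,T_d]/(T_i^p-\overline{x_i})$, cut out by the regular sequence $(T_i^p-\overline{x_i})_i$, hence a local complete intersection. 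Thus $\varphi$ is syntomic.

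For the converse (i) $\Rightarrow$ (ii) I would first observe that, since every $x\in B$ satisfies $x^p\in B^p$, the map $\varphi$ is integral; being also flat and of finite presentation, it is therefore finite locally free of some rank $r$, and flatness forces $B$ reduced, so again $\varphi\colon B\iso B^p$. Next I would show that $\Omega^1_{B/\BF_p}=\Omega^1_{B/B^p}$ is finite locally free of some rank $d$ with $r=p^d$: locally $\varphi$ admits an LCI presentation, and because $d$ annihilates every $p$-th power the boundary map of the cotangent complex $L_{B/B^p}$ vanishes, so that $\Omega^1_{B/B^p}$ splits off as a locally free summand, its rank being pinned down by comparison on the fibres, which are LCI finite $\kappa(\mathfrak p)$-algebras. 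After shrinking $\Spec B$ I would choose $x_1,\dots ,x_d\in B$ with $dx_1,\dots ,dx_d$ a basis of $\Omega^1_{B/B^p}$.

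It then remains to prove that such $x_i$ form a $p$-basis, i.e.\ that the $\varphi$-semilinear map $\Phi\colon B^{\oplus J}\to B$, $(b_\alpha)\mapsto\sum_\alpha b_\alpha^p x^\alpha$, is bijective. I would rewrite $\Phi$ as a $B$-linear map out of a free module and, using faithful flatness of $\varphi$, reduce bijectivity to the same statement on the fibres $B\otimes_{B^p}\kappa(\mathfrak p)$; on a fibre the LCI hypothesis identifies the ring with $\kappa(\mathfrak p)[T_i]/(T_i^p-a_i)$, and the fact that the $dx_i$ form a basis of differentials forces the $x_i$ to be the $T_i$ up to an invertible change, so that $\{x^\alpha\}$ is visibly a basis there; a Nakayama/fibrewise-isomorphism argument then upgrades this to bijectivity of $\Phi$ over $B$. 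I expect this last direction — establishing that $\Omega^1$ is locally free and that a basis of it lifts to a genuine $p$-basis, carried out fibre by fibre and in the stated non-Noetherian generality, where Kunz-type regularity criteria are unavailable — to be the main obstacle, the remaining steps being formal manipulations with the factorization $\varphi=(B^p\hookrightarrow B)\circ(B\iso B^p)$.
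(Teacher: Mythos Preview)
The paper gives no proof of this lemma: the statement closes with a bare \qed, so there is nothing to compare your argument against. The author evidently regards the equivalence as standard background (the relevant material is in \cite[\S 1.1]{BM} and \cite[\S 1.1]{dJ}). Your direction (ii)$\Rightarrow$(i) is correct and is the obvious argument.

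For (i)$\Rightarrow$(ii) your strategy is sound, but two steps are not justified. First, ``because $d$ annihilates every $p$-th power the boundary map of the cotangent complex $L_{B/B^p}$ vanishes'' is not right as written: the differential in a two-term model of $L_{B/B^p}$ depends on the chosen presentation and need not be zero (for $B=\BF_p[x]$ take the redundant presentation $B^p[T_1,T_2]/(T_1^p-x^p,\,T_1-T_2)$). What your remark about $p$-th powers actually buys is that Frobenius induces the \emph{zero} map $\varphi^{*}L_{B/\BF_p}\to L_{B/\BF_p}$ (clear on a simplicial polynomial resolution); hence the transitivity triangle for $\BF_p\to B^p\to B$ splits as $L_{B/B^p}\simeq L_{B/\BF_p}\oplus\varphi^{*}L_{B/\BF_p}[1]$, and since $L_{B/B^p}$ is perfect with amplitude in $[-1,0]$, vanishing of $H^{-2}$ on the right forces $H^{-1}(L_{B/\BF_p})=0$, so $\Omega^1_{B/\BF_p}=\Omega^1_{B/B^p}$ is finite projective. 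Second, on the fiber you write ``the LCI hypothesis identifies the ring with $\kappa(\fp)[T_i]/(T_i^p-a_i)$'', but LCI alone does not give this: $\BF_p[T]/(T^2)$ is a zero-dimensional LCI $\BF_p$-algebra with all $p$-th powers in $\BF_p$, yet for $p>2$ it is not of that shape. One genuinely needs the local freeness of $\Omega^1$ (just established) to force the fiber into this form and hence to show that the $B^p$-algebra map $B^p[T_1,\dots,T_d]/(T_i^p-x_i^p)\to B$ is an isomorphism. You correctly flag this as ``the main obstacle''; the outline is right, but this step is where the work is and it is not yet carried out.
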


\begin{proof} 
Let us only explain why (i) implies that the cotangent complex of $X$ over $\BF_p$ is a vector bundle. The relative cotangent complex for $\Fr:X\to X$ is a perfect complex in degrees 0, -1. On the other hand, it is the cone of the map $\Fr^*L_X\to L_X$, where $L_X$ is the cotangent complex of $X$ over $\BF_p$. But this map is zero. So we see that $L_X\oplus\Fr^*L_X[1]$ is a perfect complex in degrees 0, -1. So $L_X$ is a vector bundle.
\end{proof} 

$\BF_p$-schemes satisfying the equivalent conditions of Lemma~\ref{l:morally smooth} will be called \emph{Frobenius-smooth}.
We are mostly interested in the following two classes of such schemes.

\begin{ex}   \label{ex:1morally smooth}
$X$ is a smooth scheme over a perfect field $k$ of characteristic $p$.
\end{ex}

\begin{ex}   \label{ex:2morally smooth}
$X=\Spec k[[x_1,\ldots ,x_n]]$, where $k$ is a perfect field of characteristic $p$.
\end{ex}

\begin{rem}
The equivalent conditions of Lemma~\ref{l:morally smooth} still hold if in Examples \ref{ex:1morally smooth}-\ref{ex:2morally smooth} one replaces ``perfect field" with
``perfect $\BF_p$-algebra".
\end{rem}

\begin{rem}
In \cite[\S 1]{BM} Berthelot and Messing study $\BF_p$-algebras with a \emph{not necessarily finite} $p$-basis; they prove that in some respects they are as good as smooth $\BF_p$-algebras. More results of this type can be found in \cite[\S 1]{dJ}. In particular, any $\BF_p$-algebra with a $p$-basis is formally smooth (see \cite[\S 1.1.1]{BM}), and moreover, its cotangent complex is a free module concentrated in degree $0$ (see \cite[Lemma 1.1.2]{dJ}).
\end{rem}

\begin{lem} \label{l:F-finiteness}
Let $X$ be an $\BF_p$-scheme   whose Frobenius endomorphism is finite. Let $n\in\BN$. Let $\Delta\subset X^n$ be the diagonal $X$ and $\Delta':=(\Fr_{X^n})^{-1}(\Delta)\subset X^n$. Let $I\subset\cO_{\Delta'}$ be the ideal of $\Delta\subset\Delta'$. Then $I$ is finitely generated.
\end{lem}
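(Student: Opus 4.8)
The plan is to reduce at once to the affine case and then to write down an explicit finite generating set for $I$, using the finiteness of Frobenius in the concrete form ``$A$ is module-finite over its subring of $p$-th powers''. Since the absolute Frobenius is the identity on underlying topological spaces, $\Delta'$ and $\Delta$ have the same underlying space, which lies in $\bigcup_i U_i^n$ for any affine cover $\{U_i=\Spec A_i\}$ of $X$. As finite generation of the ideal $I$ is local on $\Delta'$, it suffices to treat each $U_i^n$, i.e.\ to assume $X=\Spec A$ with the Frobenius of $A$ finite; equivalently $A=\sum_{\mu=1}^m A^p u_\mu$ for finitely many $u_\mu\in A$, where $A^p\subset A$ denotes the subring of $p$-th powers. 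I take $u_1=1$.

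Next I set up the algebra. Write $R:=A^{\otimes n}$ (tensor product over $\BF_p$), let $m\colon R\to A$ be the multiplication map and $J:=\Ker m$, so $\Delta=\Spec(R/J)$. The absolute Frobenius of $R$ is $r\mapsto r^p$, hence the ideal of $\Delta'=\Fr_{X^n}^{-1}(\Delta)$ is $J':=(j^p:j\in J)$; note $J'\subseteq J$, so $\cO_{\Delta'}=R/J'$ and $I=J/J'$. Let $\iota_k\colon A\to R$ be the insertion into the $k$-th tensor factor. A standard computation shows that $J$ is generated as an ideal by the differences $\iota_k(a)-\iota_1(a)$ for $a\in A$ and $2\le k\le n$.

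The heart of the argument is a ``Freshman's dream'' collapse modulo $J'$. For any $c\in A$ one has $\iota_k(c)-\iota_1(c)\in J$, so $(\iota_k(c)-\iota_1(c))^p\in J'$; since $(x-y)^p=x^p-y^p$ in characteristic $p$ and $\iota_k$ is a ring map, this reads $\iota_k(c^p)\equiv\iota_1(c^p)\pmod{J'}$. I would then expand an arbitrary generator using $a=\sum_\mu c_\mu^p u_\mu$ and compute modulo $J'$:
\begin{equation*}
\iota_k(a)-\iota_1(a)=\sum_\mu\big(\iota_k(c_\mu^p)\iota_k(u_\mu)-\iota_1(c_\mu^p)\iota_1(u_\mu)\big)\equiv\sum_\mu\iota_1(c_\mu^p)\big(\iota_k(u_\mu)-\iota_1(u_\mu)\big)\pmod{J'},
\end{equation*}
where the congruence replaces $\iota_k(c_\mu^p)$ by $\iota_1(c_\mu^p)$. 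Thus every generator of $J$ lies, modulo $J'$, in the $R$-submodule spanned by the finite set $\{\iota_k(u_\mu)-\iota_1(u_\mu)\}_{2\le k\le n,\,1\le\mu\le m}$, so $I=J/J'$ is finitely generated.

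The routine points are the identification of the ideal of $\Delta'$ with $J'$ and the generation of $J$ by the differences $\iota_k(a)-\iota_1(a)$. The step that carries the real content — and the one I would be most careful about — is the congruence $\iota_k(c^p)\equiv\iota_1(c^p)\pmod{J'}$ together with the expansion above, since this is precisely where the finiteness of the decomposition $A=\sum_\mu A^p u_\mu$ is converted into finiteness of $I$; the globalization, by contrast, rests only on the elementary fact that Frobenius is a homeomorphism, so no Noetherian hypothesis is needed.
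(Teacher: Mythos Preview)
Your proof is correct but takes a more hands-on route than the paper's. The paper argues module-theoretically: any one of the projections $\pi:\Delta'\to X$ is finite (in your notation, the same congruence $\iota_k(c^p)\equiv\iota_1(c^p)\pmod{J'}$ that you isolate shows that $R/J'$ is generated as an $\iota_1(A)$-module by the finitely many products $\prod_{k\ge 2}\iota_k(u_{\mu_k})$), so $\pi_*\cO_{\Delta'}$ is a finitely generated $\cO_X$-module; the short exact sequence
\[
0\longrightarrow\pi_*I\longrightarrow\pi_*\cO_{\Delta'}\longrightarrow\cO_X\longrightarrow 0
\]
then exhibits $\pi_*I$ as the kernel of a surjection onto the free rank-one module $\cO_X$ (the sequence is even split by the diagonal section $X\cong\Delta\hookrightarrow\Delta'$), so $\pi_*I$ is finitely generated over $\cO_X$ and hence $I$ is a finitely generated ideal of $\cO_{\Delta'}$. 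Your version instead produces an explicit generating set $\{\iota_k(u_\mu)-\iota_1(u_\mu)\}_{k,\mu}$ for $I$, which the paper's argument does not. The underlying mechanism --- the Frobenius congruence between tensor factors --- is the same in both cases; the paper's packaging is a couple of lines, while yours gives more concrete information.
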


Let us recall that according to the definition from EGA, ``finitely generated" really means ``locally finitely generated".

\begin{proof} 
Let $\pi:\Delta'\mono X^n\to X$ be one of the $n$ projections. The morphism $\pi$ is finite because $\Fr_X$ is.
So the middle term of the exacts sequence
\[
0\to \pi_*I\to\pi_*\cO_{\Delta'}\to\cO_X\to 0
\]
is a finitely generated $\cO_X$-module. Since our exact sequence is locally split, this implies that
$\pi_*I$ is a  finitely generated $\cO_X$-module. So $I$ is a finitely generated ideal in $\cO_{\Delta'}$.
\end{proof}

\begin{cor} \label{c:F-finiteness}
In the situation of Lemma~\ref{l:F-finiteness} the ideal  $I$ is nilpotent on every quasi-compact open subset of $X$. \qed
\end{cor}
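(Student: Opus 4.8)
The plan is to reduce to an affine ring computation and then combine the finite generation supplied by \lemref{l:F-finiteness} with the elementary fact that every local section of $I$ is annihilated by Frobenius. First I would note that the absolute Frobenius is a homeomorphism on underlying spaces (it is the identity on topological points), so $\Delta$ and $\Delta'=\Fr_{X^n}^{-1}(\Delta)$ have the same underlying space, namely that of $X$ via the diagonal; hence an open subset of $X$ determines an open subset of $\Delta'$ and the claim makes sense. Since the assertion is local and concerns a quasi-compact open, I may cover it by finitely many affines and bound the nilpotence index on each, so it suffices to treat $X=\Spec B$. Here $X^n=\Spec C$ with $C:=B^{\otimes n}$ (tensor powers over $\BF_p$), the diagonal $\Delta$ is cut out by the kernel $J$ of the multiplication map $C\to B$, and the scheme-theoretic preimage $\Delta'=\Fr_{X^n}^{-1}(\Delta)$ is cut out by the extension of $J$ along the $p$-th power map $C\to C$, i.e. by the ideal $J'$ generated by $\{\,j^p : j\in J\,\}$. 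As $j\in J$ forces $j^p\in J$, we get $J'\subseteq J$ and $I=J/J'$.

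The key point is then immediate: for every $j\in J$ the element $j^p$ is by definition a generator of $J'$, so $j^p\in J'$ and the class of $j$ in $I=J/J'$ has vanishing $p$-th power. In other words, every local section of $I$ is $p$-nilpotent.

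To finish I would invoke \lemref{l:F-finiteness}, which tells us that $I$ is (locally) finitely generated, say $I=(a_1,\dots,a_m)$ with $a_i^p=0$. Any product of $N:=m(p-1)+1$ of these generators repeats some $a_i$ at least $p$ times by the pigeonhole principle and therefore vanishes, so $I^N=0$. Passing back to a general quasi-compact open $U$, one obtains such a bound on each affine of a finite cover and takes the maximum of the finitely many resulting exponents; this shows that $I$ is nilpotent on $U$. No uniform global bound is expected when $X$ is not quasi-compact, since the number of generators, hence $N$, may grow without bound, which is precisely why the statement is phrased for quasi-compact opens.

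The only step that genuinely requires care is the scheme-theoretic identification of $\Delta'$, namely verifying that its defining ideal $J'$ is generated by the $p$-th powers of elements of $J$; once this is in hand, the conclusion is the purely formal statement that a finitely generated ideal with $p$-nilpotent generators is nilpotent.
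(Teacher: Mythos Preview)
Your proof is correct and is exactly the argument the paper has in mind: the paper's own proof is simply ``$\qed$'', i.e.\ the corollary is regarded as immediate from \lemref{l:F-finiteness} together with the observation that every section of $I$ has $p$-th power zero. You have correctly identified and spelled out both ingredients --- the identification of the ideal of $\Delta'$ as the ideal generated by $p$-th powers of $J$, and the pigeonhole bound $I^{m(p-1)+1}=0$ once $I$ has $m$ generators --- so nothing more is needed.
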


\subsection{Some simplicial formal schemes}
Let $X$ be an $\BF_p$-scheme.

\subsubsection{The simplicial scheme $\Pow_{\bullet}\,$}  \label{sss:Pow}
Let $X_{\perf}$ be the perfection of $X$, i.e.,
\[
X_{\perf}:=\underset{\longleftarrow}\lim (\ldots\overset{\Fr}\longrightarrow X\overset{\Fr}\longrightarrow X\overset{\Fr}\longrightarrow X).
\]

For an integer $n\ge 0$,  let $[n]:=\{0,1,\ldots, n\}$ and let $\Pow_n\subset X_{\perf}^{[n]}$ be the preimage of the diagonal $X\mono X^{[n]}$; equivalently, $\Pow_n$ is the fiber product (over $X$) of $n+1$ copies of $X_{\perf}$. The schemes $\Pow_n$ form a simplicial scheme $\Pow_{\bullet}=\Pow_{\bullet}(X)$ over $X$. 

The scheme $\Pow_0=X_{\perf}$ is perfect. For any $n$, the scheme $\Pow_n$ is \emph{semiperfect} in the sense of \cite{SW}, i.e., the Frobenius endomorphism of $\Pow_n$ is a closed embedding. Note that the underlying topological space of $\Pow_n$ is that of $X$. As usual, the structure sheaf of $\Pow_n$ is denoted by $\cO_{\Pow_n}$.

\subsubsection{The simplicial formal scheme $\Font_{\bullet}\,$}   \label{sss:Font}
Let $\Font_n$ be the ``Fontainization" of $\Pow_n$, i.e., 
\[
\Font_n=\underset{\longrightarrow}\lim ( \Pow_n\overset{\Fr}\longrightarrow  \Pow_n\overset{\Fr}\longrightarrow \Pow_n\overset{\Fr}\longrightarrow \ldots).
\]
Each $\Font_n$ is a perfect formal scheme over $\BF_p\,$, whose underlying topological space is that of~$X$. 
These formal schemes form a simplicial formal scheme $\Font_{\bullet}=\Font_{\bullet}(X)$.

Corollary~\ref{c:F-finiteness} implies that if $\Fr_X$ is finite then $\Font_n$ is equal to the formal completion of $X_{\perf}^{[n]}=X_{\perf}^{n+1}$
along the closed subscheme $\Pow_n\subset X_{\perf}^{[n]}\,$.

\subsubsection{The simplicial formal scheme $\Acris_{\bullet}\,$}   \label{sss:Acris}
Let $\Acris_n$ be the $p$-adic formal scheme obtained from $\Pow_n$ by applying Fontaine's functor $A_{\cris}$ 
(see \cite[\S 2.2]{F94} or \cite[\S 2.4]{Dr}). 
So the underlying topological space of $\Acris_n$ is that of $X$, and the structure sheaf $\cO_{\Acris_n}$ is the $p$-adic completion of the PD hull\footnote{The definition of PD hull is given in \cite[\S I.2.3]{B74}. Throughout this article, PD hulls are taken in the category of PD algebras over $(\BZ_p,p\BZ_p)$ (rather than over $(\BZ_p,0)$). In other words, we assume that $\gamma_n(p)$ is equal to $p^n/n!\in p\BZ_p$.} of the surjection $W(\cO_{\Font_n})\epi\cO_{\Pow_n}$ (as usual, PD stands for ``divided powers" and $W$ for the $p$-typical Witt vectors). The formal schemes $\Acris_n$ form a simplicial formal scheme $\Acris_{\bullet}=\Acris_{\bullet}(X)$. 

By functoriality, the morphism $\Fr_X :X\to X$ induces a canonical simplicial morphism $F:\Acris_{\bullet}(X)\to \Acris_{\bullet}(X)$. The morphism $F:\Acris_n\to \Acris_n$ is usually \emph{not an isomorphism} if $n>0$ (if $n=0$ it is an isomorphism because  $\Acris_0=W(X_{\perf})$).

\subsection{Notation and terminology related to quasi-coherent sheaves}  \label{ss:qcoh notation}

\subsubsection{$p$-adic formal schemes and stacks}  \label{sss:formal stacks}
By a $p$-adic formal scheme (resp.~$p$-adic formal stack) $\sY$ we mean a sequence of schemes (resp.~stacks) $\sY_n$ equipped with isomorphisms $\sY_{n+1}\otimes_{\BZ}\BZ/p^n\BZ\iso \sY_n$.
(Then $\sY_n$ is over $\BZ/p^n\BZ$ and $\sY_{n+1}\otimes_{\BZ}\BZ/p^n\BZ=\sY_{n+1}\otimes_{\BZ/p^{n+1}\BZ}\BZ/p^n\BZ$). In this situation we often write $\sY\otimes \BZ/p^n\BZ$ instead of $\sY_n$.

$\sY$ is said to be $\BZ_p$-flat if each $\sY_n$ is flat over $\BZ/p^n\BZ$.

\subsubsection{The notation $\QCoh (\sY)$}   \label{sss:qcoh notation}
If $\sY$ is a scheme or stack we write $\QCoh (\sY)$ for the category of quasi-coherent sheaves on $\sY$.

Now suppose that $\sY$ and $\sY_n$ are as in \S\ref{sss:formal stacks}. Then we write $\QCoh (\sY)$ for the projective limit of the categories $\QCoh (\sY_n)$ with respect to the pullback functors $\QCoh (\sY_{n+1})\to\QCoh (\sY_n)$ (so an object of $\QCoh (\sY)$ is a sequence of objects $\cF_n\in\QCoh (\sY_n)$ with isomorphisms $\cF_{n+1}/p^n\cF_{n+1}\iso\cF_n$). Note that $\QCoh (\sY)$ is a Karoubian additive category (but usually not an abelian one). We often write $\cF/p^n\cF$ instead of $\cF_n$.

\subsubsection{$\BZ_p$-flatness}   \label{sss:qcoh flatness}
Let $\sY$ and $\sY_n$ be as in \S\ref{sss:formal stacks} and let $\cF\in\QCoh (\sY)$. We say that $\cF$ is \emph{$\BZ_p$-flat} if each $\cF_n$ is flat over $\BZ/p^n\BZ$. 
Let $\QCoh^\flat  (\sY)\subset\QCoh (\sY )$ be the full subcategory of $\BZ_p$-flat objects.

\subsubsection{Finite generation}   \label{sss:qcoh-fin}
We say that $\cF\in\QCoh (\sY)$ is \emph{finitely generated} if $\cF_n$ is finitely generated\footnote{Let us recall that according to the definition from EGA, ``finitely generated" really means ``locally finitely generated".} for each $n$ (or equivalently, for $n=1$). Let $\QCoh_{\fin} (\sY )\subset\QCoh (\sY )$ be the full subcategory of finitely generated objects.
Let $\QCoh_{\fin}^\flat  (\sY ):=\QCoh_{\fin} (\sY )\cap\QCoh^\flat  (\sY)$.

\subsubsection{Cohomology}   \label{sss:qcoh cohomology}
Let $\sY$ and $\sY_n$ be as in \S\ref{sss:formal stacks}. Let $\cF$ and $\cF_n$ be as in \S\ref{sss:qcoh notation}. Then we define $R\Gamma (\sY,\cF )$ to be the homotopy projective limit of $R\Gamma (\sY_n,\cF_n )$.

\subsubsection{Equivariant objects}   \label{sss:equivariant objects}
Let $\Gamma\rightrightarrows \sY $ be a groupoid in the category of $p$-adic formal schemes. Assume that the canonical morphisms $\Gamma\to \sY$ are flat, quasi-compact, and quasi-separated. Then for each $n$ we have the quotent stack $\sY_n/\Gamma_n$ (the words ``quotient" and ``stack" are understood in the sense of the fpqc topology). The stacks $\sY_n/\Gamma_n$ form a $p$-adic formal stack $\sY/\Gamma$. In this situation objects of $\QCoh (\sY/\Gamma )$ are also called \emph{$\Gamma$-equivariant objects} of $\QCoh (\sY)$.

\subsubsection{Objects of $\QCoh (\sY )$ as sheaves}  \label{sss:bad&good news}
Let $\sY$ be a $p$-adic formal scheme and $\cF\in\QCoh (\sY)$. Let $\cF_\infty$ be the projective limit of the sheaves $\cF_n=\cF/p^n\cF$ in the category of presheaves on the topological space $\sY$; it is clear that $\cF_\infty$ is a sheaf. As explained to me by M.~Temkin, 
\begin{equation}  \label{e:goodnews}
\cF_\infty/p^n\cF_\infty=\cF_n
\end{equation}
(this will be proved in \S\ref{sss: proof of goodnews}). Moreover,
\begin{equation}  \label{e:2goodnews}
R\Gamma (\sY ,\cF_\infty)=R\Gamma  (\sY ,\cF),
\end{equation}
where $R\Gamma  (\sY ,\cF)$ was defined in \S\ref{sss:qcoh cohomology}: indeed, for any open affine $U\subset\sY$ the derived projective limit of $H^0(U,\sF_n)$ equals the usual one because the maps $H^0(U,\sF_{n+1})\to H^0(U,\sF_n)$ are surjective.

Because of \eqref{e:goodnews} and \eqref{e:2goodnews}, we will usually not distinguish $\cF_\infty$ from $\cF$.

Let $\cO_\sY$ denote the projective limit of the sheaves $\cO_{\sY_n}$. Then the ringed space $(\sY ,\cO_{\sY})$ is a formal scheme in the sense of EGA, and $\cF_\infty$ is a sheaf of $\cO_\sY$-modules.

\subsubsection{Proof of \eqref{e:goodnews}}  \label{sss: proof of goodnews}
It suffices to show that for every open affine $U\subset\sY$ the sequence
\[
H^0(U,\cF_\infty)\overset{p^n}\longrightarrow H^0(U,\cF_\infty)\to H^0(U,\cF_n)\to 0
\]
is exact. Let $M:=H^0(U,\cF_\infty)$ and  $M_n:=H^0(U,\cF_n)$. Then $M$ is the projective limit of~$M_n$. The transition maps $M_{n+1}\to M_n$ are surjective, so the map 
$M\to M_n$ is surjective for each~$n$.

Let  $M^n:=\Ker (M\epi M_n)$, so $M_n=M/M^n$. We have to show that $M^n=p^nM$.

Note that $M$ is separated and complete with respect to the filtration formed by $M^i$, $i\in\BN$.
Since $M_i=M_{i+1}/p^iM_{i+1}$, we have
\begin{equation}  \label{e: proof of goodnews}
M^i=p^iM+M^{i+1}.
\end{equation}
So $M^n=p^nM+M^{n+1}\supset p^nM$. On the other hand, if $x\in M^n$ then applying \eqref{e: proof of goodnews} for $i=n, n+1,n+2,\ldots\,$, we get
\[
x=p^nm_0+x_1, \quad x_1=p^{n+1}m_1+x_2,  \quad x_2=p^{n+2}m_2+x_3, \; \ldots
\]
for some elements $m_j\in M$ and $x_j\in M^{n+j}$. Then $x=p^nm$, where $m=\sum\limits_{j=0}^\infty p^jm_j$ (this infinite series converges because $p^jm_j\in M^j$ and $M$ is complete).  \qed

\subsection{Formulation of the results}
\subsubsection{Convention}
By a \emph{crystal} on an $\BF_p$-scheme $X$ we mean a crystal of quasi-coherent $\cO$-modules on the absolute crystalline site
$\Cris (X)$. For the definition of this site, see the end of \S III.1.1.3 of \cite{B74}. 

The proof of the following theorem will be given in \S\ref{ss:flatgroupoid}-\ref{ss:stacky cohomology}.

\begin{thm}   \label{t:crystals via stacks}
Let $X$ be a Frobenius-smooth scheme (i.e., an $\BF_p$-scheme satisfying the equivalent conditions of Lem\-ma~\ref{l:morally smooth}). 

(i) The simplicial formal scheme $\Acris_{\bullet}$ from \S\ref{sss:Acris} is the nerve\footnote{Recall that a groupoid  (and more generally, a category) is uniquely determined by its nerve.} of a flat affine\footnote{This means that the two canonical morphisms $\sG\to W(X_{\perf} )$ are flat and affine.}  groupoid $\sG$ acting on $\Acris_0=W(X_{\perf} )$.

(ii) A crystal on $X$ is the same as an object $M\in\QCoh (W(X_{\perf})/\sG )$ (or equivalently, a $\sG$-equivariant object of $\QCoh (W(X_{\perf}))$.

(iii) For $M$ as above, $R\Gamma_{\cris} (X,M)=R\Gamma (W(X_{\perf})/\sG ,M)$.
\end{thm}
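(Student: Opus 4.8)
\emph{Strategy.} The governing observation is that $\Pow_\bullet$ (\secref{sss:Pow}) is the \v{C}ech nerve of the projection $X_{\perf}\to X$, hence tautologically the nerve of the banal groupoid $X_{\perf}\times_X X_{\perf}\rightrightarrows X_{\perf}$; in particular the Segal maps $\Pow_n\iso\Pow_1\times_{\Pow_0}\cdots\times_{\Pow_0}\Pow_1$ are isomorphisms. For part (i) the plan is to show that the functorial passage from $\Pow_\bullet$ to $\Font_\bullet$ to $\Acris_\bullet$ (Fontainization, then $p$-typical Witt vectors, divided-power hull, and $p$-completion) carries this nerve to the nerve of a groupoid $\sG\rightrightarrows W(X_{\perf})$. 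Concretely I would verify the Segal condition, that
\[
\Acris_n\longrightarrow\Acris_1\times_{\Acris_0}\cdots\times_{\Acris_0}\Acris_1\qquad(n\text{ factors})
\]
is an isomorphism. Passing to perfections and applying $W$ commute with the fiber products defining $\Pow_\bullet$, so the crux is a base-change statement for Fontaine's functor on the semiperfect rings $\cO_{\Pow_n}$: that $A_{\cris}$ of the fiber product $\cO_{\Pow_n}$ is the $p$-completed tensor product of copies of $A_{\cris}(\cO_{\Pow_1})$ over $A_{\cris}(\cO_{X_{\perf}})=W(\cO_{X_{\perf}})$. Flatness and affineness of the two maps $\sG\rightrightarrows W(X_{\perf})$ would follow from the same flatness property of $A_{\cris}$. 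This is precisely where Frobenius-smoothness enters: by \lemref{l:morally smooth} the relevant rings are built from $\BF_p$-algebras with a finite $p$-basis, whose cotangent complexes are free in degree $0$, which is what makes the divided-power envelopes flat; and \corref{c:F-finiteness} guarantees that $\Font_n$ is the honest formal completion, so that these envelopes are the expected ones.

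For part (ii) I would realize crystals as descent data along $\sG$. First, $X_{\perf}\to X$ is faithfully flat---indeed $\Fr_X$ is syntomic by \lemref{l:morally smooth}, so $X_{\perf}$ is a filtered inverse limit of faithfully flat $X$-schemes---hence an fpqc cover whose \v{C}ech nerve is $\Pow_\bullet$. Since crystals of quasi-coherent modules satisfy flat descent, a crystal on $X$ is the same as a compatible system of crystals on the $\Pow_n$ with descent data. Now each $\Pow_n$ is semiperfect (\secref{sss:Pow}), and $A_{\cris}$ produces its universal divided-power thickening $\Acris_n$; for semiperfect rings this thickening controls the whole crystalline site, so that crystals on $\Pow_n$ are the same as quasi-coherent $\cO_{\Acris_n}$-modules. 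Combining these two facts, a crystal on $X$ becomes a compatible system of $\cO_{\Acris_n}$-modules whose transition maps are isomorphisms, i.e. by part (i) an object of $\QCoh(W(X_{\perf})/\sG)$, equivalently a $\sG$-equivariant object of $\QCoh(W(X_{\perf}))$.

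For part (iii) the cohomology comparison is then formal. By the definitions in \secref{sss:qcoh cohomology} and \secref{sss:equivariant objects}, $R\Gamma(W(X_{\perf})/\sG,M)$ is the totalization of the cosimplicial object $n\mapsto R\Gamma(\Acris_n,M_n)$ indexed by the nerve $\Acris_\bullet$. On the other hand, flat descent along $\Pow_\bullet$ computes $R\Gamma_{\cris}(X,M)$ as the totalization of $n\mapsto R\Gamma_{\cris}(\Pow_n,M|_{\Pow_n})$, and the semiperfectness of $\Pow_n$ identifies each such term with $R\Gamma(\Acris_n,M_n)$. The two totalizations coincide, which is the asserted equality; this is the precise sense in which the \v{C}ech--Alexander complex and the cohomology of the quotient stack are literally the same object.

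The step I expect to be the main obstacle is the base-change and flatness input in part (i): that $A_{\cris}$ turns the fiber products $X_{\perf}\times_X\cdots\times_X X_{\perf}$ into $p$-completed tensor products and that the resulting structure maps are flat, since this is what is genuinely particular to the present situation (and where Frobenius-smoothness is indispensable). Once it is in place, parts (ii) and (iii) amount to recognizing the standard \v{C}ech--Alexander machinery inside the stacky formalism, which is the sense in which the statement becomes ``almost obvious'' when one works with $\Acris_\bullet$ rather than with a de Rham complex. A secondary point deserving care is the fpqc descent for crystals invoked in part (ii), which I would reduce to ordinary Grothendieck descent for quasi-coherent modules on the terms of $\Acris_\bullet$.
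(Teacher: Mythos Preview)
Your strategy is sound but diverges from the paper's in one structural choice: the paper introduces an auxiliary formal lift $\sX=\Spf\tilde B$ of $X$ and the simplicial object $\sX_\bullet$ (the $p$-completed PD hulls of the diagonals $X\hookrightarrow\sX^{[n]}$), and routes every step through it. For (i), rather than verifying the Segal condition for $\Acris_\bullet$ directly, the paper quotes \cite[Cor.~1.3.2]{BM} to see that $\sX_\bullet$ is the nerve of a flat affine groupoid $\sR$ on $\sX$ (its structure sheaf is explicitly a completed PD-polynomial algebra), lifts $X_{\perf}\to X$ to a flat $\tilde\pi:W(X_{\perf})\to\sX$, and uses that PD hulls commute with flat base change to get $\Acris_n\simeq\sX_n\times_{\sX^{[n]}}W(X_{\perf})^{[n]}$; the Segal isomorphisms and flatness for $\Acris_\bullet$ are then inherited from $\sX_\bullet$. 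For (ii)--(iii) the difference is sharper: the paper does \emph{not} invoke fpqc descent or cohomological descent for the crystalline topos along $X_{\perf}\to X$. It uses instead that every object of $\Cris(X)$ admits a map to $\sX\otimes(\BZ/p^r\BZ)$ for some $r$ (again via \cite[Prop.~1.2.6]{BM}), which immediately identifies crystals on $X$ with $\QCoh(\sX/\sR)$ and $R\Gamma_{\cris}(X,M)$ with the \v{C}ech--Alexander complex $R\Gamma(\sX/\sR,M)$; the Cartesian square above then yields the isomorphism of stacks $W(X_{\perf})/\sG\iso\sX/\sR$, finishing both parts. Your approach is more intrinsic (no auxiliary lift), but it leans on two claims you only name: that $X\mapsto\{\text{crystals on }X\}$ is an fpqc stack, and that $R\Gamma_{\cris}$ satisfies cohomological descent along $X_{\perf}\to X$. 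Both are true, but your proposed reduction of the first to ``ordinary Grothendieck descent for quasi-coherent modules on the terms of $\Acris_\bullet$'' reads as circular---that identification is precisely statement (ii)---so you would need an independent argument for crystalline descent, which is exactly what the paper's detour through $\sX$ lets it avoid.
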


In the case $M=\cO$ statement (iii) is equivalent to \cite[Thm.~1.10]{BMS}, but our proof is different (we think of crystalline cohomology in terms of the  \v {C}ech-Alexander complex\footnote{The  \v {C}ech-Alexander complex is discussed in  \cite[\S 5.1, 5.5]{Gr68},  \cite[\S 2]{BhdJ}, and \cite[\S 1.7-1.8]{B13}.}
rather than the de Rham complex).

\begin{rem}  \label{r:Tot}
An object $M\in\QCoh (W({X_{\perf})}/\sG )$ defines a collection of objects $M^n\in\QCoh (\Acris_n )$. For each $r\in\BN$, the sheaves $M^n/p^rM^n$ form a cosimplicial sheaf\footnote{More precisely, a cosimplicial sheaf of modules over the cosimplicial sheaf of rings formed by the structure sheaves of the formal schemes  $\Acris_n\,$.} on the topological space $X$.
Statement (iii) of the theorem can be reformulated as a canonical isomorphism
\begin{equation}   \label{e:Tot}
R\Gamma_{\cris} (X,M)\iso\underset{r}{\underset{\longleftarrow}\lim}  R\Gamma (X,\Tot (M^{\bullet}/p^rM^{\bullet})).
\end{equation}
\end{rem}

\begin{rem}   
The stacks $(W({X_{\perf})}/\sG)\otimes_{\BZ_p}(\BZ/p^r\BZ)$ are usually \emph{not} Artin stacks 
because the two canonical morphisms $\sG\to W({X_{\perf})}$ usually have infinite type.
\end{rem}

\subsection{Proof of Theorem~\ref{t:crystals via stacks}(i)}     \label{ss:flatgroupoid}
We can assume that $X=\Spec B$, where $B$ is an $\BF_p$-algebra with a finite $p$-basis. 

\subsubsection{The simplicial formal scheme $\sX_{\bullet}\,$}   \label{sss:sXbullet}
By \cite[Prop.~1.1.7]{BM} or by \cite[Remark~1.2.3(a)]{dJ}, $X$ admits a lift $\sX=\Spf\sB$, where $\sB$ is a flat $p$-adically complete $\BZ_p$-algebra with $\sB/p\sB=B$. 
Fix $\sX$. Let $\sX_n$ be the $p$-adically completed PD hull of the (ideal of the) diagonal
\[
X\mono X^{n+1}\subset \sX^{n+1}=\sX^{[n]}.
\]
Then $\sX_n$ is a $p$-adic formal scheme whose underlying topological space is $X$. The formal schemes $\sX_n$ form a simplicial formal scheme $\sX_{\bullet}\,$.

Fix a $p$-basis $x_1,\ldots ,x_d\in B$. 
Let $\tilde x_j\in\cB$, $\tilde x_j\mapsto x_j$. We have canonical embeddings
$i_m:\cB=H^0(\sX,\cO_{\sX})\mono H^0(\sX^{[n]},\cO_{\sX^{[n]}})$.

\begin{prop}   \label{p:flatness by BM}
As an $i_m(\cB )$-algebra, $H^0(\sX_n,\cO_{\sX_n})$ is the $p$-adically completed algebra of PD polynomials over $i_m(\cB )$ with respect to the elements
$i_r(\tilde x_j)-i_m(\tilde x_j)$, where $r\in\{0,\ldots ,n\}$, $r\ne m$, $1\le j\le d$.
\end{prop}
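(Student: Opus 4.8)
The plan is to reduce the statement to a standard computation of the divided-power envelope of a coordinatized ideal. Since $X=\Spec B$ is affine, everything is about rings. Write $A:=i_m(\cB)$ and let $R:=H^0(\sX^{[n]},\cO_{\sX^{[n]}})$ be the $p$-adic completion of the $(n+1)$-fold tensor product $\cB\,\widehat\otimes_{\BZ_p}\cdots\,\widehat\otimes_{\BZ_p}\cB$. By construction $\sX_n=\Spf$ of the $p$-adic completion of the PD hull $D_J(R)$, where $J\subset R$ is the ideal of the diagonal $X\hookrightarrow\sX^{[n]}$ and the divided powers are taken compatibly with $(\BZ_p,p\BZ_p)$. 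Thus it suffices to produce an isomorphism of $A$-algebras between this $p$-completed PD hull and the $p$-completed PD polynomial algebra $A\langle\xi_{r,j}\rangle$ on the symbols $\xi_{r,j}:=i_r(\tilde x_j)-i_m(\tilde x_j)$, $r\in\{0,\ldots,n\}\setminus\{m\}$, $1\le j\le d$.

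First I would determine $J$ together with a coordinate system. Factor the diagonal as $X\hookrightarrow\sX\hookrightarrow\sX^{[n]}$, the second arrow being the diagonal section of the $m$-th projection $\mu\colon\sX^{[n]}\to\sX$; dually $\mu\colon R\to A$ is the multiplication map and $\mu\circ i_m=\id_A$. Writing $K:=\ker\mu$ for the ideal of the $\sX$-diagonal, we have $J=K+pR$. The essential non-formal input is that $B$ carries a finite $p$-basis: by \cite[Prop.~1.1.7]{BM} (equivalently \cite[Lemma~1.1.2]{dJ}) the ring $\cB$ is formally smooth over $\BZ_p$ with $\Omega^1_{\cB/\BZ_p}$ free on $d\tilde x_1,\ldots,d\tilde x_d$. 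Base-changing along the $m$-th factor shows $R$ is formally smooth over $A$ with $\Omega^1_{R/A}$ free on the classes $d\xi_{r,j}$ (the differentials of the $m$-th factor becoming zero relative to $A$). Since $\mu$ is a retraction of the formally smooth $A$-algebra $R$, the elements $\xi_{r,j}\in K$, whose differentials form a basis of $K/K^2=\Omega^1_{R/A}\otimes_{R,\mu}A$, constitute a formal coordinate system for $\mu$; in particular $\widehat R_K\cong A[[\xi_{r,j}]]$, so that $J$ is topologically generated by $p$ and the $\xi_{r,j}$.

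With the coordinates fixed I would invoke the standard structure of the PD envelope of such an ideal. Because $p\in A$ already carries divided powers compatible with the chosen PD structure, forming $D_J(R)$ only adds free divided powers of the additional generators $\xi_{r,j}$; this yields the free PD polynomial algebra, $D_J(R)\cong A\langle\xi_{r,j}\rangle$, free as an $A$-module on the PD monomials $\prod\xi_{r,j}^{[\alpha_{r,j}]}$ (see \cite[\S I.4]{B74}). Completing $p$-adically identifies $H^0(\sX_n,\cO_{\sX_n})$ with the $p$-adically completed PD polynomial algebra over $A=i_m(\cB)$ on the $\xi_{r,j}$, which is precisely the assertion.

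The step I expect to be the main obstacle is the coordinate claim, not the divided-power bookkeeping. Since $B$ need not be of finite type over $\BF_p$, the rings involved are non-Noetherian and $R$ is not literally a polynomial algebra over $A$, so the generation of $K$ and the structure of $D_J(R)$ must be extracted from formal smoothness and the freeness of $\Omega^1_{R/A}$ rather than from an explicit presentation. Concretely, one has to check that the purely algebraic identity $D_{pP+(\xi)}(P)=A\langle\xi_{r,j}\rangle$ for the polynomial ring $P=A[\xi_{r,j}]$ is unaffected by replacing $P$ with the formally smooth $R$ — most cleanly by comparing $K$-adic (hence $J$-adic) completions, where $\widehat R_K\cong A[[\xi_{r,j}]]$, and observing that both the PD envelope and the final $p$-adic completion depend only on this completed data. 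Finiteness of the $p$-basis guarantees that, for each fixed $n$, only finitely many variables $\xi_{r,j}$ occur, so no convergence issues arise beyond the $p$-adic ones.
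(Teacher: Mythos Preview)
Your proposal is correct and is in substance the same as the paper's approach: the paper's entire proof is to cite \cite[Cor.~1.3.2(i)]{BM} for the two-factor case and remark that the general case follows, whereas you unwind that reference directly for arbitrary $n$. The ingredients you invoke---formal smoothness of $\cB$ over $\BZ_p$ with $\Omega^1$ free on $d\tilde x_j$ from the $p$-basis, the resulting coordinate system $\xi_{r,j}$ for the diagonal ideal, and the standard computation of the PD envelope of a regularly presented ideal---are exactly those behind the cited Berthelot--Messing corollary.
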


\begin{proof}
For $n=2$ this is \cite[Cor.~1.3.2(i)]{BM}. The general case follows. 
\end{proof}

\begin{cor}    \label{c:flatness by BM}
$\sX_{\bullet}$ is the nerve of a flat affine groupoid $\sR$ acting on $\sX$. \qed
\end{cor}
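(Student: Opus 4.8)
The plan is to extract everything from \propref{p:flatness by BM}. I set $\sR:=\sX_1$ and take $\sX_0=\sX$ as the scheme of objects (indeed $\sX_0$ is the completed PD hull of $X\hookrightarrow\sX$, which is $\sX$ itself, the ideal $p\cB$ already being a PD ideal). The two face maps $d_0,d_1:\sX_1\to\sX$ correspond on coordinate rings to the embeddings $i_0,i_1:\cB\to H^0(\sX_1,\cO_{\sX_1})$. To recognise $\sX_\bullet$ as the nerve of a groupoid it then suffices to check two things: (a) the Segal maps $\sX_n\to\sX_1\times_\sX\cdots\times_\sX\sX_1$ ($n$-fold fibre product, target of each factor glued to source of the next) are isomorphisms, and (b) there is an inversion interchanging the two face maps. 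Flatness and affineness of $d_0,d_1$ I read off at once from the Proposition.

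For the flatness and affineness, take $m=0$ in \propref{p:flatness by BM}: $H^0(\sX_1,\cO_{\sX_1})$ is the $p$-adically completed PD polynomial algebra over $\cB$ on the $d$ generators $i_1(\tilde x_j)-i_0(\tilde x_j)$. Modulo $p^k$ this is the PD polynomial algebra over $\cB/p^k\cB$, which is free on the PD monomials in those generators; so $\sX_1\otimes\BZ/p^k\BZ$ is free, in particular flat, over $\sX\otimes\BZ/p^k\BZ$, and $\sX_1$ is by construction affine over $\sX$. This same freeness is what will make the fibre products in (a) behave well: it guarantees that the completed tensor product computing the coordinate ring of $\sX_1\times_\sX\cdots\times_\sX\sX_1$ commutes with reduction modulo $p^k$ and introduces no higher Tor.

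The substantial step is (a). Labelling the $n$ fibre-product factors by $r=1,\ldots,n$, write $\tau_j^{(r)}$ for the generator $i_1(\tilde x_j)-i_0(\tilde x_j)$ of the $r$-th copy of $\sX_1$. By the freeness just noted, the coordinate ring of $\sX_1\times_\sX\cdots\times_\sX\sX_1$ is the completed PD polynomial algebra over $\cB$ on the $nd$ generators $\tau_j^{(r)}$, where the gluing identifies the source of factor $r+1$ with the target of factor $r$. On the other hand \propref{p:flatness by BM} with $m=0$ presents $H^0(\sX_n,\cO_{\sX_n})$ as the completed PD polynomial algebra over $\cB$ on the generators $\xi_j^{(r)}:=i_r(\tilde x_j)-i_0(\tilde x_j)$, $r=1,\ldots,n$. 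The Segal map is the substitution realising the telescoping identity $\xi_j^{(r)}=\sum_{s\le r}\tau_j^{(s)}$ (equivalently $\tau_j^{(r)}=\xi_j^{(r)}-\xi_j^{(r-1)}$, with $\xi_j^{(0)}:=0$), which expresses the total increment from the $0$-th to the $r$-th point as the sum of the successive increments. Since a completed PD polynomial algebra is the divided-power algebra on the free module spanned by its generators, any $\cB$-linear change of basis of that module induces an isomorphism; the telescoping substitution is exactly such a unipotent integer change of basis, its effect on divided powers being spelled out by the PD binomial formula $(u+v)^{[k]}=\sum_{a+b=k}u^{[a]}v^{[b]}$. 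Hence the Segal maps are isomorphisms.

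For (b) I would use the symmetry of the construction: $\sX_1$ is the completed PD hull of the diagonal in $\sX\times\sX$, so the transposition of the two factors induces an involution of $\sX_1$ that interchanges $i_0$ and $i_1$, hence interchanges $d_0$ and $d_1$ and sends each $\tau_j$ to $-\tau_j$. This is the groupoid inversion. Combined with (a) it exhibits $\sX_\bullet$ as the nerve of a groupoid $\sR\rightrightarrows\sX$ whose structure maps are flat and affine. The one place that demands genuine care is (a): one must be sure that the completed tensor product really is the completed PD polynomial algebra on the $\tau_j^{(r)}$ — this is where the $\BZ_p$-flatness coming from \propref{p:flatness by BM} is indispensable — and that the telescoping substitution is an isomorphism of PD, not merely of polynomial, algebras.
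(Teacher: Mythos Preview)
Your proposal is correct and is precisely the argument the paper's bare \qed\ leaves implicit: from the explicit description of $H^0(\sX_n,\cO_{\sX_n})$ in \propref{p:flatness by BM} one reads off flatness (freeness of PD polynomial algebras modulo $p^k$) and the Segal condition (the telescoping change of PD generators $\xi_j^{(r)}\leftrightarrow\tau_j^{(r)}$), while the inversion is the swap of the two factors of $\sX\times\sX$. There is nothing to add.
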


\begin{lem}   \label{l:PD hulls}
The formation of PD hulls commutes with flat base change.
\end{lem}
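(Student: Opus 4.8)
The plan is to reduce the statement to a purely algebraic assertion about divided power envelopes and then to verify that assertion using the explicit construction of the envelope as a quotient of a free divided power algebra. Working over the base PD algebra $(\BZ_p,p\BZ_p,\gamma)$, let $C$ be an algebra, $J\subset C$ an ideal, and $D=D_\gamma(J)$ the PD hull (the construction of which is recalled in \cite[\S I.2.3]{B74}). A flat base change is encoded by a flat homomorphism $C\to C'$, and one sets $J':=JC'$. Since the assertion is Zariski-local on the schemes in question, it suffices to prove that the canonical map $D_\gamma(J)\otimes_C C'\to D_\gamma(J')$ is an isomorphism.

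First I would recall the construction of $D_\gamma(J)$. Regarding $J$ as a $C$-module, form the free divided power algebra $\Gamma_C(J)=\bigoplus_{n\ge 0}\Gamma^n_C(J)$, equipped with its canonical PD ideal $\Gamma^+_C(J)=\bigoplus_{n\ge 1}\Gamma^n_C(J)$. Following Berthelot, the PD hull can be realized as a quotient $D_\gamma(J)=\Gamma_C(J)/\mathcal K$, where $\mathcal K$ is the divided power ideal generated by the relations identifying, for each $f\in J$, the tautological degree-one element $\langle f\rangle\in\Gamma^1_C(J)=J$ with the degree-zero element $f\in\Gamma^0_C(J)=C$, together with the relations imposing compatibility with $\gamma$ on $p\BZ_p$. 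The point of this presentation is that every ingredient is manifestly functorial in $C$.

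The proof is then a base-change comparison carried out in two steps. By Roby's theorem that the divided power algebra commutes with base change, $\Gamma_C(J)\otimes_C C'\cong\Gamma_{C'}(J\otimes_C C')$; and since $C\to C'$ is flat the inclusion $J\hookrightarrow C$ stays injective after $\otimes_C C'$, so $J\otimes_C C'\cong J'$ and hence $\Gamma_C(J)\otimes_C C'\cong\Gamma_{C'}(J')$. The second step is to identify the image of $\mathcal K\otimes_C C'$ with the ideal $\mathcal K'$ used to build $D_\gamma(J')$: the chosen generators of $\mathcal K$ map precisely to the chosen generators of $\mathcal K'$, and because $C\to C'$ is flat the exact sequence $0\to\mathcal K\to\Gamma_C(J)\to D_\gamma(J)\to 0$ remains exact after $\otimes_C C'$, so $\mathcal K\otimes_C C'\hookrightarrow\Gamma_{C'}(J')$ has image exactly $\mathcal K'$. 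Passing to quotients yields $D_\gamma(J)\otimes_C C'\cong\Gamma_{C'}(J')/\mathcal K'=D_\gamma(J')$, which is the desired isomorphism.

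The step I expect to be the main obstacle is the second one, namely controlling the ideal $\mathcal K$ under base change. Since $\mathcal K$ is generated as a \emph{divided power} ideal rather than as an ordinary ideal, one must know that forming the PD ideal generated by a prescribed set of elements commutes with flat base change; this is exactly where Roby's base-change property for $\Gamma$ and the preservation of injectivity by the flat map $C\to C'$ are both indispensable, and it is the only place where flatness is used essentially. Once the uncompleted statement is established, the version for the $p$-adically completed PD hulls appearing in \S\ref{sss:Acris} follows, under the standing finiteness hypotheses, because $p$-adic completion commutes with the relevant flat base change.
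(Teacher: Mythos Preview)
The paper does not prove this lemma; it simply cites \cite[Prop.~I.2.7.1]{B74}. Your sketch is a correct outline of the argument behind that citation, so you are supplying what the paper omits rather than taking a genuinely different route.

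The step you flag as the obstacle is indeed the crux. It is handled cleanly once one invokes the fact (also in \cite{B74}) that a PD structure extends along any flat ring map: since $D_\gamma(J)\to D_\gamma(J)\otimes_C C'$ is flat, the PD structure on $D_\gamma(J)$ extends to $D_\gamma(J)\otimes_C C'=\Gamma_{C'}(J')/\operatorname{im}(\mathcal K\otimes_C C')$, so the image of $\mathcal K\otimes_C C'$ is an ideal modulo which the PD structure descends, whence $\mathcal K'\subset\operatorname{im}(\mathcal K\otimes_C C')$ by minimality; the reverse inclusion holds because every element of $\mathcal K$ is built from the chosen generators by PD operations and hence maps into $\mathcal K'$. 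Alternatively, one can bypass the explicit $\Gamma$-presentation entirely and verify the universal property of $D_\gamma(J')$ directly for $D_\gamma(J)\otimes_C C'$, using the same flat-extension fact to equip the latter with the required PD structure. Your final paragraph on $p$-adic completion is unnecessary here: the lemma as stated, and as used in \S\ref{sss:tildepi}, concerns the uncompleted PD hull; completion is applied only afterwards.
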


\begin{proof}
This  was proved by Berthelot \cite[Prop.~I.2.7.1]{B74}.
\end{proof}

\subsubsection{End of the proof}   \label{sss:tildepi}
By \cite[Prop.~1.2.6]{BM}, the morphism $\pi :X_{\perf}\to X$ lifts to a morphism $\tilde\pi :W(X_{\perf})\to\sX$. Since $\Fr_X:X\to X$ is flat, so are $\pi$ and  $\tilde\pi$.

Let $\Pow_n$ be as in \S\ref{sss:Pow}. The formal scheme $\Acris_n$ from  \S\ref{sss:Acris} is the $p$-adically completed PD hull of $\Pow_n$ in $W(X_{\perf})^{[n]}$. So by Lemma~\ref{l:PD hulls} and flatness of $\tilde\pi$, the diagram
\begin{equation}   \label{e:Cartesian by BM}
\CD
\Acris_n  @>\tilde\pi_n>> \sX_n   \\
@VVV    @VVV   \\
W(X_{\perf})^{[n]}   @>>> \sX^{[n]}
\endCD
\end{equation}
is Cartesian. So Corollary~\ref{c:flatness by BM} implies that $\Acris_{\bullet}$ is the nerve of a flat groupoid acting on~$W(X_{\perf})$. \qed

\begin{rem}    \label{r:iso of stacks}
In the situation of \S\ref{sss:tildepi}, $\tilde\pi :W(X_{\perf})\to\sX$ induces an isomorphism
\begin{equation}  \label{e:iso of stacks}
W(X_{\perf})/\sG\iso\sX/\sR ,
\end{equation}
where $\sG$ and $\sR$ are as in Theorem~\ref{t:crystals via stacks}(i) and Corollary~\ref{c:flatness by BM}, respectively. This follows from faithful flatness of $\tilde\pi$ and the fact that the diagram \eqref{e:Cartesian by BM} is Cartesian.
\end{rem}

\begin{rem}    \label{r:2iso of stacks}
The isomorphism \eqref{e:iso of stacks} does not depend on the choice of $\tilde\pi$. Indeed, if $\tilde\pi' :W(X_{\perf})\to\sX$ is another lift of $\pi$ then the morphism
$W(X_{\perf})\overset{(\tilde\pi,\tilde\pi' )}\longrightarrow\sX\times\sX$ factors through $\sX_1$; to see this, use the PD structure on the ideal of the subscheme 
$X_{\perf}\subset W(X_{\perf})$.
\end{rem}

\subsection{Proof of Theorem~\ref{t:crystals via stacks}(ii)}
\subsubsection{General remark}   \label{sss:def of Acris}
Recall that if $Y$ is a semiperfect $\BF_p$-scheme then $A_{\cris}(Y)\otimes (\BZ/p^r\BZ)$ is the final object of the crystalline site of $Y$ over $\BZ/p^r\BZ$ 
(see \cite[\S 2.2]{F94} or \cite[Prop.~2.2.1]{Dr}).
So a crystal on $Y$ is the same as an object $N\in\QCoh (A_{\cris}(Y))$, and the crystalline cohomology of the crystal is just $R\Gamma (A_{\cris}(Y),N)$.

\subsubsection{The functor in one direction}   \label{sss:in one direction}
Let us apply \S\ref{sss:def of Acris} to the semiperfect schemes $\Pow_n$ from \S\ref{sss:Pow}. Recall that $\Acris_n:=A_{\cris}(\Pow_n)$. So given a crystal on $X$, its pullback to $\Pow_n$ can be viewed as an object
$M^n\in\QCoh (\Acris_n)$. Moreover, the collection of objects $M^n$ is compatible via $*$-pullbacks\footnote{By this we mean that for every map $f:[m]\to [n]$ one has a canonical isomorphism $(f^+)^*M^m\iso M^n$, where $f^+:X^{[n]}\to X^{[m]}$ is induced by $f$, and these isomorphisms are compatible with composition of $f$'s.}.
By the definition of the groupoid $\sG$ (see Theorem~\ref{t:crystals via stacks}(i)), such a compatible collection is the same as a $\sG$-equivariant object of 
$\QCoh (\Acris_0)=\QCoh (W(X_{\perf}))$ or equivalently, an object of  $\QCoh(W(X_{\perf})/\sG)$. Thus we have constructed a functor
\begin{equation}  \label{e:in one direction}  
\{\mbox{Crystals on }X\}\to\QCoh (W(X_{\perf})/\sG).
\end{equation}

It remains to prove that the functor \eqref{e:in one direction} is an equivalence. The question is local, so we can assume that $X$ is the spectrum of an $\BF_p$-algebra with a finite $p$-basis.

\subsubsection{Factorizing the functor \eqref{e:in one direction}}   \label{sss:factorizing}
Let $\sX$ and $\sX_{\bullet}$ be as in \S\ref{sss:sXbullet}. 
Each scheme of the form $\sX_n\otimes(\BZ/p^r\BZ)$ is a PD thickening of $X$, so a crystal $M$ on $X$ defines for each $n$ an object $M_{\sX_n}\in\QCoh (\sX_n)$.
This collection of objects is compatible via $*$-pullbacks\footnote{Here we use that the map $\sX_n\otimes(\BZ/p^r\BZ)\to\sX_m\otimes(\BZ/p^r\BZ)$ corresponding to any map $[m]\to [n]$ is a PD morphism.}. By the definition of the groupoid $\sR$ (see Corollary~\ref{c:flatness by BM}), such a compatible collection is the same as an $\sR$-equivariant 
object of  $\QCoh (\sX )$ or equivalently, an object of $\QCoh (\sX/\sR )$.  Thus we have constructed a functor
\begin{equation}  \label{e:2in one direction}  
\{\mbox{Crystals on }X\}\to\QCoh (\sX/\sR).
\end{equation}
The functor \eqref{e:in one direction}  is the composition of \eqref{e:2in one direction} and the equivalence 
$$\QCoh (\sX/\sR)\iso\QCoh (W(X_{\perf})/\sG)$$ 
corresponding to \eqref{e:iso of stacks}. It remains to show that \eqref{e:2in one direction} is an equivalence. This is a consequence of the next lemma.

\begin{lem}    \label{l:"formal smoothness"}
Suppose that $X$ is the spectrum of a $\BF_p$-algebra with a finite $p$-basis. Let $\sX$ be as in \S\ref{sss:sXbullet}.
Then every object of the absolute crystalline site of $X$ admits a morphism to $\sX\otimes (\BZ/p^r\BZ)$ for some $r$.  
\end{lem}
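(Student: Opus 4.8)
The plan is to unwind what a morphism to $\sX\otimes(\BZ/p^r\BZ)$ means and then produce it by the infinitesimal lifting criterion. Since the assertion is local on $T$, I may assume that $T=\Spec A$ is affine and choose $r$ with $p^rA=0$; write $J:=\ker(A\to\cO_U)$ for the PD ideal of the thickening. Unwinding the definitions, a morphism $(U,T)\to(X,\sX\otimes(\BZ/p^r\BZ))$ in $\Cris(X)$ is exactly a ring homomorphism $\varphi\colon\sB/p^r\sB\to A$ lifting the structure map $B\to\cO_U$. PD-compatibility of such a $\varphi$ is automatic: the PD ideal of the target object $(X,\sX\otimes(\BZ/p^r\BZ))$ is $(p)$, and $p$ lies in $J$ because $\cO_U$ has characteristic $p$; since every ring homomorphism respects the canonical divided powers $\gamma_n(p)=p^n/n!$ (which exist in $A$ as the thickening is taken over $(\BZ_p,p\BZ_p)$), nothing further needs to be checked. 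Thus the entire content is the existence of the lift $\varphi$.

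To produce $\varphi$ I would combine the formal smoothness of $\sB$ afforded by the finite $p$-basis (Berthelot--Messing, \cite[\S 1.1]{BM}; see also \cite[\S 1]{dJ}), which makes each $\sB/p^i\sB$ formally smooth over $\BZ/p^i\BZ$, with a d\'evissage along the $p$-adic filtration of $A$. First one lifts $B=\sB/p\sB\to\cO_U$ to a homomorphism $\sB/p\sB\to A/pA$; then one climbs the tower of square-zero extensions $A/p^{i+1}A\epi A/p^iA$ (the kernel $p^iA/p^{i+1}A$ is square-zero for $i\ge 1$), at each stage extending the map already constructed by the infinitesimal lifting criterion for the formally smooth algebra $\sB/p^{i+1}\sB$. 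After $r-1$ steps this yields $\varphi\colon\sB/p^r\sB\to A/p^rA=A$, reducing to $B\to\cO_U$ modulo $J$.

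The main obstacle is the base step, namely the characteristic-$p$ lifting of $B\to\cO_U$ across the kernel $\bar J:=\ker(A/pA\to\cO_U)$. This ideal is a PD ideal but need not be nilpotent, so the plain infinitesimal criterion (which lifts only along nilpotent ideals) does not apply directly; this is precisely where the divided-power structure of the crystalline site must substitute for nilpotence. Concretely, every $y\in\bar J$ satisfies $y^p=p!\,\gamma_p(y)=0$, so elements of $\bar J$ are nilpotent, and I would exploit the $p$-basis to build the lift by hand: choose lifts in $A/pA$ of the images of $x_1,\dots,x_d$ and extend through the presentation of $B$ provided by the $p$-basis, the divided powers guaranteeing that the requisite expressions are well defined. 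Checking that this prescription genuinely defines a ring homomorphism, rather than merely a map on the $p$-basis, and that the affine construction can be globalized, is the part that will demand the most care.
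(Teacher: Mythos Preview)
Your proposal is correct and follows the same path as the paper: the paper's entire proof is the one-line citation ``Follows from \cite[Prop.~1.2.6]{BM}'', and what you have sketched---reducing to a ring-lifting problem, exploiting $y^p=p!\,\gamma_p(y)\in pA$ to handle the non-nilpotent PD ideal, and building the lift by hand from the $p$-basis---is essentially an outline of the proof of that proposition. The step you flag as needing the most care (checking that the assignment on the $p$-basis extends to a ring homomorphism) is precisely the content of \cite[Prop.~1.2.6]{BM}, so you may simply cite it instead of redoing the d\'evissage.
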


\begin{proof}
Follows from \cite[Prop.~1.2.6]{BM}. 
\end{proof}

\subsection{Proof of Theorem~\ref{t:crystals via stacks}(iii)}    \label{ss:stacky cohomology}
Let $M$ be a crystal on $X$. We can also think of $M$ as an object of $\QCoh (W(X_{\perf})/\sG)$.

\subsubsection{General remark}
Let $X_{\cris}$ be the absolute crystalline topos of $X$ (i.e., the category of sheaves on $\Cris (X)$).
Let $Y_{\bullet}$ be a simplicial ringed topos over $X_{\cris}$. Then one has a canonical morphism
\begin{equation}  \label{e:2Tot}
R\Gamma_{\cris}(X,M)\to\Tot (R\Gamma (Y_{\bullet}, M^{\bullet})),
\end{equation}
where $M^n$ is the pullback 
of $M$ to $Y_n\,$. Moreover, this construction is functorial in $Y_{\bullet}\,$.

\subsubsection{The map in one direction}  \label{sss:map in one direction}
Apply \eqref{e:2Tot} for $Y_{\bullet}=(\Pow_{\bullet})_{\cris}\,$, where $\Pow_{\bullet}$ is as in \S\ref{sss:Pow}. 
As explained in \S\ref{sss:def of Acris}, we can think of $M^n$ as an object of   $\QCoh(\Acris_n)$, where $\Acris_n:=A_{\cris}(\Pow_n)$; it is this object that was denoted by $M^n$ in Remark~\ref{r:Tot}. Moreover, $R\Gamma_{\cris}(\Pow_n, M^n)=R\Gamma (\Acris_n ,M^n)$ by  \S\ref{sss:def of Acris}.
Thus we can rewrite \eqref{e:2Tot} as a morphism
\begin{equation}   \label{e:3Tot}
R\Gamma_{\cris}(X,M)\to\Tot (R\Gamma(\Acris_{\bullet}, M^{\bullet}))=R\Gamma (W(X_{\perf})/\sG, M).
\end{equation}
It remains to prove that the morphism \eqref{e:3Tot} is an isomorphism. The question is local, so we can assume that $X$ is the spectrum of an $\BF_p$-algebra with a finite $p$-basis. 

\subsubsection{End of the proof}
Let $\sX$ and $\sX_{\bullet}$ be as in \S\ref{sss:sXbullet}. By Lemma~\ref{l:"formal smoothness"},
we get an isomorphism
\begin{equation}    \label{e:4Tot}
R\Gamma_{\cris}(X,M)\iso\Tot (R\Gamma(\sX_{\bullet},  M_{\sX_{\bullet}}))=\Tot (\Gamma(\sX_{\bullet},  M_{\sX_{\bullet}}))
\end{equation}
where $M_{\sX_{\bullet}}$ is as in \S\ref{sss:factorizing}; $\Tot (\Gamma(\sX_{\bullet},  M_{\sX_{\bullet}}))$ is called the  \emph{\v {C}ech-Alexander complex}.

We have 
\[
\Tot (\Gamma(\sX_{\bullet},  M_{\sX_{\bullet}}))=R\Gamma (\sX/\sR, M),
\]
where $\sR$ is the groupoid from Corollary~\ref{c:flatness by BM} and $\sX/\sR$ is the quotient stack (just as in \S\ref{sss:factorizing}, we can think of $M$ as an object of
$\QCoh (\sX/\sR)$). It remains to show that \eqref{e:3Tot} is the usual isomorphism 
\[
R\Gamma (\sX/\sR, M)\to R\Gamma (W(X_{\perf})/\sG, M)
\]
corresponding to the isomorphism of stacks \eqref{e:iso of stacks}. This follows from the next lemma.

\begin{lem}  \label{l:general toposlogy}
Choose $\tilde\pi :W(X_{\perf})\to\sX$ as in \S\ref{sss:tildepi} and let $\tilde\pi_n :\Acris_n\to\sX_n$ be as in dia\-gram~\eqref{e:Cartesian by BM}. Then the map \eqref{e:3Tot} equals the composition of \eqref{e:4Tot} and the
morphism $$\Tot (R\Gamma(\sX_{\bullet},  M_{\sX_{\bullet}}))\to \Tot (R\Gamma(\Acris_{\bullet}, M^{\bullet}))$$ that comes from the maps 
$\tilde\pi_n^*:R\Gamma (\sX_n ,M_{\sX_n})\to R\Gamma (\Acris_n ,M^n)$.
\end{lem}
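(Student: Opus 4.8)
The plan is to realize both \eqref{e:3Tot} and \eqref{e:4Tot} as instances of the single functorial construction \eqref{e:2Tot}, and then to read off the asserted factorization from the functoriality of \eqref{e:2Tot} in the simplicial ringed topos $Y_\bullet$. For \eqref{e:3Tot} one takes $Y_\bullet=(\Pow_\bullet)_{\cris}$, with structure morphisms $(\Pow_n)_{\cris}\to X_{\cris}$ coming from the projections $\Pow_n\to X$; the identification $R\Gamma((\Pow_n)_{\cris},M^n)=R\Gamma(\Acris_n,M^n)$ of \S\ref{sss:def of Acris}, valid because $\Acris_n$ is the final object of $\Cris(\Pow_n)$ modulo $p^r$, rewrites this instance of \eqref{e:2Tot} as \eqref{e:3Tot}. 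For \eqref{e:4Tot} one takes instead the simplicial topos $Y'_\bullet$ of sheaves on the (common) underlying space $X$ of $\sX_\bullet$, with structure morphisms $Y'_n\to X_{\cris}$ given by evaluation at the object $\sX_n\in\Cris(X)$; Lemma~\ref{l:"formal smoothness"} ensures that the resulting instance of \eqref{e:2Tot} is precisely the isomorphism \eqref{e:4Tot}.

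Next I would produce, from the chosen lift $\tilde\pi$, a morphism of simplicial ringed topoi $Y_\bullet\to Y'_\bullet$ over $X_{\cris}$ and identify the map it induces on cohomology. At level $n$, pulling the object $\sX_n\in\Cris(X)$ back along $\Pow_n\to X$ gives an object of $\Cris(\Pow_n)$, and since $\Acris_n$ is the final object of $\Cris(\Pow_n)$ there is a unique morphism from $\Acris_n$ to it; by the construction of diagram~\eqref{e:Cartesian by BM} this morphism is exactly $\tilde\pi_n:\Acris_n\to\sX_n$. The crystal structure on $M$ then furnishes a canonical identification $M^n=M_{\Acris_n}\iso\tilde\pi_n^*M_{\sX_n}$, and under this identification the level-$n$ component of \eqref{e:2Tot} for $Y_\bullet$ factors as the level-$n$ component for $Y'_\bullet$ followed by the pullback $\tilde\pi_n^*:R\Gamma(\sX_n,M_{\sX_n})\to R\Gamma(\Acris_n,M^n)$. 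Since $\tilde\pi_\bullet$ is a morphism of simplicial formal schemes, these identifications are compatible in the cosimplicial direction, so they assemble into a genuine morphism $Y_\bullet\to Y'_\bullet$ over $X_{\cris}$ whose effect on the $\Tot$-complexes is $\Tot$ of the maps $\tilde\pi_n^*$.

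Functoriality of \eqref{e:2Tot} in $Y_\bullet$ now yields a commutative triangle whose two edges issuing from $R\Gamma_{\cris}(X,M)$ are \eqref{e:4Tot} and \eqref{e:3Tot} and whose third edge is $\Tot$ of the $\tilde\pi_n^*$; this is exactly the assertion. The one delicate point is the previous paragraph: one must match the crystalline-topos description of the $\Pow_\bullet$-side with the description of the $\sX_\bullet$-side through objects of $\Cris(X)$, verify that the universal morphism out of the final object $\Acris_n$ coincides with $\tilde\pi_n$, and check that the resulting crystal-transition identification $M_{\Acris_n}\iso\tilde\pi_n^*M_{\sX_n}$ is natural in $n$. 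Granting this, the conclusion is a formal consequence of the functoriality already asserted for \eqref{e:2Tot}.
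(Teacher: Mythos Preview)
Your approach is essentially the same as the paper's: realize both \eqref{e:3Tot} and \eqref{e:4Tot} as instances of the functorial construction \eqref{e:2Tot} for suitable simplicial ringed topoi over $X_{\cris}$, produce a morphism between these simplicial topoi, and conclude by functoriality. The only difference is the model you choose for $Y'_\bullet$: the paper takes the slice topos $X_{\cris}/\sX_\bullet$, whereas you take the Zariski topos of the underlying space of $\sX_\bullet$ together with the evaluation morphism at $\sX_n$. The paper's choice makes the morphism $(\Pow_\bullet)_{\cris}\to Y'_\bullet$ over $X_{\cris}$ immediate (a lift through a slice topos is just a section of the pulled-back object, here supplied by $\tilde\pi_n$), which streamlines exactly the ``delicate point'' you flag; but your version is also correct once that verification is made.
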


\begin{proof}
For each $n$, $\sX_n$ is an object of the crystalline topos $X_{\cris}$. Let $X_{\cris}/\sX_n$ be the category of objects of $X_{\cris}$ over $\sX_n$; this category is a ringed topos over $X_{\cris}$. As $n$ varies, we get a simplicial ringed topos $X_{\cris}/\sX_{\bullet}$ over $X_{\cris}$. Moreover, we have a morphism 
$$(\Pow_{\bullet})_{\cris}\to X_{\cris}/\sX_{\bullet}$$ 
of simplicial ringed topoi over $X_{\cris}\,$.

The morphism  \eqref{e:4Tot} is the morphism  \eqref{e:2Tot} for $Y_{\bullet}=X_{\cris}/\sX_{\bullet}$.
 So the lemma follows from functoriality of the map \eqref{e:2Tot} with respect to $Y_{\bullet}\,$.
\end{proof}

\subsection{$H^0_{\cris}(X,\cO)$ and the ring of constants}
As before, we assume that $X$ is Frobenius-smooth in the sense of \S\ref{ss:morally smooth}. Let us compute $H^0_{\cris}(X,\cO)$, where $\cO$ is the structure sheaf on the absolute crystalline site of $X$.

\subsubsection{The ring of constants}  \label{sss:2ring of constants}
Let $A$ be the ring of regular functions on $X$ and $k:=\bigcap\limits_{n=1}^\infty A^{p^n}$. We call $k$ the \emph{ring of constants} of $X$. 

Frobenius-smoothness implies that $X$ is reduced. So $A$ is reduced, and $k$ is a perfect $\BF_p$-algebra.

\begin{lem}  \label{l:hat G_a}
Let $X=\Spec B$, where $B$ is an $\BF_p$-algebra with a $p$-basis\footnote{The definition of $p$-basis was given in Lemma~\ref{ss:morally smooth}.} $x_1,\ldots ,x_m$. Let $\pi :X\to\BG_a^m$ be the morphism defined by $x_1,\ldots ,x_m$. Let $\hat\BG_a$ be the formal additive group over $\BF_p$. 

(i) There is a unique action of $\hat\BG_a^m$ on $X$ such that $\pi :X\to\BG_a^m$ is $\hat\BG_a^m$-equivariant.

(ii) The ring of constants on $X$ is equal to the ring of all $\hat\BG_a^m$-invariant regular functions on $X$.

(iii) A closed subscheme $Y\subset X$ is preserved by the $\hat\BG_a^m$-action if and only if
for each $n\in\BN$ there exists a closed subscheme $Z\subset X$ such that $Y=(\Fr^n_X)^{-1}(Z)$.
\end{lem}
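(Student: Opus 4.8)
The plan is to realize the $\hat\BG_a^m$-action as a compatible system of actions of the infinitesimal group schemes $\alpha_{p^n}^m:=\ker(\Fr^n\colon\BG_a^m\to\BG_a^m)=\Spec A_n$, where $A_n:=\BF_p[s_1,\ldots,s_m]/(s_1^{p^n},\ldots,s_m^{p^n})$, using that $\hat\BG_a^m=\Spf\BF_p[[s_1,\ldots,s_m]]$ is their colimit. Everything rests on one structural observation about the $p$-basis. Iterating the defining property of $x_1,\ldots,x_m$, the ring $B$ is free over $B^{p^n}$ with basis $\{x^\beta\}_{\beta\in\{0,\ldots,p^n-1\}^m}$; equivalently, setting $\xi_i:=x_i^{p^n}\in B^{p^n}$, one has $B\cong B^{p^n}[x_1,\ldots,x_m]/(x_i^{p^n}-\xi_i)$. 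From this presentation I would first check that $x_i\mapsto x_i+s_i$, together with the inclusion $B^{p^n}\hookrightarrow B$ on coefficients, defines an algebra homomorphism $a_n^*\colon B\to B\otimes A_n$: it is well defined precisely because $(x_i+s_i)^{p^n}=x_i^{p^n}+s_i^{p^n}=\xi_i$ in $B\otimes A_n$, so the defining relation is preserved. The decisive step is then to observe that $a_n^*$ exhibits $\Fr^n_X\colon X\to X$ — i.e. the inclusion $B^{p^n}\hookrightarrow B$, using that $B$ is reduced so that $\Fr^n\colon B\iso B^{p^n}$ — as an $\alpha_{p^n}^m$-torsor: the canonical map $B\otimes_{B^{p^n}}B\to B\otimes A_n$ sending $1\otimes x_i$ to $x_i+s_i$ is an isomorphism, with inverse $s_i\mapsto 1\otimes x_i-x_i\otimes 1$. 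This torsor statement is the conceptual heart of the proof; granting it, parts (ii) and (iii) are pure descent.

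For (i), I would use that an action of $\hat\BG_a^m$ on $X$ is the same as a compatible system of coactions $a_n^*\colon B\to B\otimes A_n$ of the group schemes $\alpha_{p^n}^m$. At each finite level $x_1,\ldots,x_m$ generate $B$ as a $B^{p^n}$-algebra, so \emph{any} algebra homomorphism $B\to B\otimes A_n$ sending $x_i\mapsto x_i+s_i$ and restricting to the identity on $B^{p^n}$ is uniquely determined. This uniqueness does the work: it forces the coaction axioms for $a_n^*$ (both composites in coassociativity, and the counit composite, are homomorphisms of this form), it gives the uniqueness claimed in (i), and it yields compatibility of the $a_n^*$ across $n$ — one only has to see that the reduction of $a_{n+1}^*$ along $A_{n+1}\to A_n$ is still the identity on $B^{p^n}$, which holds because $a_{n+1}^*(b)=b+\varepsilon$ with $\varepsilon$ in the augmentation ideal forces $a_{n+1}^*(b^{p^n})=b^{p^n}+\varepsilon^{p^n}$ with $\varepsilon^{p^n}=0$ in $B\otimes A_n$ by the freshman's dream. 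The action is then $a^*=\varprojlim_n a_n^*$, with $\pi$-equivariance built into $a_n^*(x_i)=x_i+s_i$.

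For (ii), a function is invariant iff $a^*(b)=b$, i.e. $a_n^*(b)=b$ for every $n$; by the torsor structure the fixed ring at level $n$ is $B^{\alpha_{p^n}^m}=B^{p^n}$ (the invariants of an fppf torsor are the base), so the invariant ring is $\bigcap_n B^{p^n}$, which is exactly the ring of constants $k$. For (iii), fppf descent of closed immersions along the $\alpha_{p^n}^m$-torsor $\Fr^n_X\colon X\to X$ identifies the $\alpha_{p^n}^m$-stable closed subschemes of $X$ with the closed subschemes of the quotient $\Spec B^{p^n}\cong X$ pulled back along $\Fr^n_X$; these are precisely the $Y$ of the form $(\Fr^n_X)^{-1}(Z)$ (concretely, $I(Y)$ is generated by $I(Y)\cap B^{p^n}$). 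Since $Y$ is $\hat\BG_a^m$-stable iff it is $\alpha_{p^n}^m$-stable for every $n$, this is exactly the stated criterion.

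The step I expect to be most delicate is not any single computation but the passage to the limit underlying (i): setting up the formal-group action as a genuine compatible system and verifying both the coaction axioms and the cross-level compatibility of the $a_n^*$. The freshman's dream is the mechanism that makes these compatibilities cohere, and keeping track of reductions along $A_{n+1}\to A_n$ is where care is needed. By contrast, the torsor identity, once observed, is a short direct check, and it reduces the substantive content of (ii) and (iii) to standard faithfully flat descent.
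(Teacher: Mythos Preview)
Your proof is correct and follows essentially the same route as the paper: both reduce to the finite levels $\alpha_{p^n}^m=\Ker(\Fr^n:\BG_a^m\to\BG_a^m)$, both identify $\Fr^n_X:X\to X$ as an $\alpha_{p^n}^m$-torsor (the paper phrases this as the square with $\Fr^n:X\to X$ over $\Fr^n:\BG_a^m\to\BG_a^m$ being Cartesian), and both deduce (ii)--(iii) from that torsor property. The only difference is presentational: you construct the coactions explicitly on generators and verify the axioms by hand, whereas the paper reads off the action and its uniqueness directly from the Cartesian diagram.
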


\begin{proof}
(i) $\hat\BG_a$ is the inductive limit of the finite group schemes $\Ker (\BG_a^m\overset{\Fr^n}\longrightarrow\BG_a^m )$, $n\in\BN$. For every $n\in\BN$ the diagram
\[
\CD
X  @>\Fr^n>>X   \\
@V\pi VV    @VV\pi V   \\
\BG_a^m  @>\Fr^n>> \BG_a^m
\endCD
\]
is Cartesian (for $n=1$ by the definition of $p$-basis, the general case follows). Any action of $\Ker (\BG_a^m\overset{\Fr^n}\longrightarrow\BG_a^m )$ on $X$ is fiberwise with respect to the morphism $\Fr^n:X\to X$. So there is one and only one such action with the property that $\pi :X\to\BG_a^m$ is $\hat\BG_a^m$-equivariant.

(ii-iii) The morphism $\Fr^n:X\to X$ is a torsor with respect to $\Ker (\BG_a^m\overset{\Fr^n}\longrightarrow\BG_a^m )$.
\end{proof}

\begin{prop}  \label{p:constants}

Let $X$ be a Frobenius-smooth $\BF_p$-scheme. Let $k$ be the ring of constants of $X$. Then the  map $W(k)\to H^0_{\cris}(X,\cO)$ induced by the morphism $X\to\Spec k$ is an isomorphism. 
\end{prop}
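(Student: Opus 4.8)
The plan is to compute $H^0_\cris(X,\cO)$ as the ring of ``crystalline constants'' via the stacky description of Theorem~\ref{t:crystals via stacks}(iii), and then to identify this ring with $W(k)$ by exploiting the Frobenius endomorphism of the nerve $\Acris_\bullet$. I may assume $X=\Spec B$ is affine with a finite $p$-basis; the general case follows by Zariski descent, since both $U\mapsto H^0_\cris(U,\cO)$ and $U\mapsto k(U)$ are sheaves on the space $X$ (the former because $R\Gamma_\cris(X,\cO)=R\Gamma(W(X_\perf)/\sG,\cO)$ is computed on $X$, the latter because $k=H^0(X,\bigcap_n\cO_X^{p^n})$ for reduced $X$).

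First I would record the equalizer description. Taking $M=\cO$ in Theorem~\ref{t:crystals via stacks}(iii) and applying $H^0$ gives
\[
H^0_\cris(X,\cO)=H^0(W(X_\perf)/\sG,\cO)=\Ker\bigl(d_0^*-d_1^*\colon H^0(\Acris_0,\cO)\to H^0(\Acris_1,\cO)\bigr),
\]
where $d_0,d_1$ are the two face maps of the groupoid. Writing $B_\perf$ for the perfection of $B$, we have $H^0(\Acris_0,\cO)=W(B_\perf)$ and $H^0(\Acris_1,\cO)=A_\cris(R')$ with $R'=B_\perf\otimes_B B_\perf$; call the equalizer $E$. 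Since $\sG$ is flat (Theorem~\ref{t:crystals via stacks}(i)) and $W(X_\perf)$ is $\BZ_p$-flat, $A_\cris(R')$ is $p$-torsion-free, so $E$ is a $p$-saturated closed $\BZ_p$-submodule of $W(B_\perf)$. Hence $E$ is $p$-torsion-free and $p$-adically complete, and $E/pE$ embeds into $W(B_\perf)/p=B_\perf$; write $\bar E\subseteq B_\perf$ for its image.

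Next I would pin down $\bar E$ by two observations. (a) Composing $p_i:=d_i^*$ with the canonical thickening map $A_\cris(R')\to R'$ shows that for $b\in E$ the reduction $\bar b\in B_\perf$ satisfies $\bar b\otimes1=1\otimes\bar b$ in $B_\perf\otimes_B B_\perf$; as $B\to B_\perf$ is faithfully flat (the Frobenius of $B$ is flat by Frobenius-smoothness and $\Spec B_\perf\to\Spec B$ is a homeomorphism), faithfully flat descent yields $\bar E\subseteq B$. (b) The Frobenius endomorphism $F\colon\Acris_\bullet\to\Acris_\bullet$ of \S\ref{sss:Acris} is an isomorphism on $\Acris_0$ and, being simplicial, commutes with $d_0,d_1$; using that $F$ is injective on $A_\cris(R')$ (it reduces to the Frobenius of the semiperfect ring $R'$, and $A_\cris(R')$ is $p$-torsion-free) one checks from $F(d_0F^{-1}b-d_1F^{-1}b)=d_0b-d_1b=0$ that $F(E)=E$, so $F$ restricts to an automorphism of $E$. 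Reducing mod $p$, the Frobenius of $\bar E$ is bijective, i.e. $\bar E$ is a \emph{perfect} subring of $B$; for such a ring every element has all its $p$-power roots inside $B$, whence $\bar E\subseteq\bigcap_n B^{p^n}=k$. Finally the morphism $X\to\Spec k$, together with the identification $H^0_\cris(\Spec k,\cO)=W(k)$ (valid since $\Spec k$ is perfect), furnishes the map $W(k)\to E$ of the statement, whose reduction is the inclusion $k\hookrightarrow\bar E$; thus $k\subseteq\bar E$ and therefore $\bar E=k$.

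To conclude, $E$ is $p$-torsion-free and $p$-adically complete with $E/pE=k$ perfect, so the canonical map $W(k)\to E$ is an isomorphism, which is the assertion. I expect the main obstacle to be exactly the inclusion $\bar E\subseteq k$: the mod-$p$ equalizer condition by itself only gives $\bar E\subseteq B^p$, and promoting this to membership in $B^{p^n}$ for \emph{all} $n$ requires genuinely integral information. The device that supplies it is the Frobenius-equivariance of the whole nerve $\Acris_\bullet$, which forces $\bar E$ to be perfect; everything else is formal.
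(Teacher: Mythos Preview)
Your strategy---compute $H^0_{\cris}(X,\cO)$ as the equalizer $E\subset W(B_{\perf})$ and then use Frobenius-equivariance of $\Acris_\bullet$ to force $\bar E$ to be perfect---is genuinely different from the paper's, and most of it is fine. But the crucial step~(b) has a real gap: the assertion that $F$ is injective on $A_{\cris}(R')$ is not justified by your parenthetical. Reducing $F$ modulo $p$ does \emph{not} give the Frobenius of $R'$; it gives an endomorphism of $A_{\cris}(R')/p$, which (via diagram~\eqref{e:Cartesian by BM} and Proposition~\ref{p:flatness by BM}) is a PD-polynomial ring over $B_{\perf}$ in generators $z_j$. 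A direct computation shows $F(z_j)\equiv 0\pmod p$ (because $\sigma(w)\equiv w^p\pmod p$ and $z_j^p=p!\,z_j^{[p]}$), so $F\bmod p$ annihilates the entire PD-ideal. Thus $p$-torsion-freeness of $A_{\cris}(R')$ alone cannot yield injectivity of $F$; one would need a genuine argument tracking $p$-adic valuations through the PD-filtration. Without it you cannot conclude $\sigma^{-1}(E)\subseteq E$, hence cannot conclude $\bar E$ is perfect, and your route to $\bar E\subseteq k$ breaks down at exactly the point you yourself flag as ``the main obstacle.''

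The paper sidesteps this entirely. It identifies $H^0_{\cris}(X,\cO)$ with $\bigcap_i\Ker D_i$ inside a flat lift $\tilde B$, then observes that the $D_i$ extend to divided-power operators $D_i^{(l)}$ giving a $\hat\BG_a^m$-action on $\tilde B$; an element killed by all $D_i$ is $\hat\BG_a^m$-invariant, and Lemma~\ref{l:hat G_a}(ii) says the invariants in $B$ are exactly~$k$. In fact this same mechanism shows that the mod-$p$ equalizer of $B_{\perf}\rightrightarrows A_{\cris}(R')/p$ is already equal to $k$ (pass to $\sX_\bullet$ via Remark~\ref{r:iso of stacks} and use the PD Taylor expansion $i_1(b)=\sum_\gamma D^{(\gamma)}(b)\,z^{[\gamma]}$), which is strictly stronger than what your step~(a) extracts by projecting further to $R'$. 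So your closing remark that ``the mod-$p$ equalizer condition by itself only gives $\bar E\subseteq B^p$'' undersells what is available: done carefully, the mod-$p$ equalizer already lands in $k$, and no Frobenius trick is needed.
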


\begin{proof}
We can assume that $X=\Spec B$, where $B$ is an $\BF_p$-algebra with a $p$-basis $x_1,\ldots ,x_m$. 
By \cite[Prop.~1.1.7]{BM}, there exists a flat $p$-adically complete $\BZ_p$-algebra $\tilde B$ with $\tilde B/p\tilde B=B$. For each $i$ choose a lift of $x_i$ to $\tilde B$; this lift will still be noted by $x_i\,$. By \cite[Prop.~1.3.1]{BM}, for each $n\in\BN$ the module of differentials of $\tilde B/p^n \tilde B$ is a free $(\tilde B/p^n \tilde B)$-module with basis $dx_i\,$, $1\le i\le m$. So for each $i\le m$ there is a unique derivation 
$D_i:\tilde B\to\tilde B$ such that $D_i(x_j)=\delta_{ij}\,$.  
One has
\[
H^0_{\cris}(X,\cO)=\bigcap_{i=1}^m\Ker (\tilde B\overset{D_i}\longrightarrow \tilde B);
\]
this follows, e.g., from \cite[Prop.~1.3.3]{BM} (because $H^0_{\cris}(X,\cO)$ is the $\BZ_p$-module of endomorphisms of the crystal $\cO$).

Since $x_1,\ldots ,x_m$ form a $p$-basis in $B$, for each $r\in\BN$ the ring $\tilde B$ is topologically generated by $x_1,\ldots ,x_m$ and elements of the form $\tilde f^{{p^r}}$, $\tilde f\in\tilde B$. This implies that for each $i\le m$ and $l\in\BN$ one has
\[
D_i^l(\tilde B)\subset l!\cdot\tilde B,
\]
so one has the commuting operators $D_i^{(l)}:=(l!)^{-1}D_i^l$ acting on $\tilde B$ and satisfying the Leibniz formula
\[
D_i^{(l)}(\tilde f\tilde g)=\sum_{a+b=l}D_i^{(a)}(\tilde f)D_i^{(b)}(\tilde g), \quad \tilde f,\tilde g\in\tilde B
\]
and the relation $D_i^{(r)}D_i^{(s)}=\binom{r+s}{r}D_i^{(r+s)}$. These operators define an action of $\hat\BG_a^m$ on $\tilde B$, where $\hat\BG_a$ is the additive formal group over $\BZ_p\,$. The corresponding action of $\hat\BG_a^m$ on $B$ is the one from Lemma~\ref{l:hat G_a}(i).

To prove that the map $W(k)\to H^0_{\cris}(X,\cO)$ is an isomorphism, it suffices to check that if $\tilde f\in\tilde B$ is killed by $D_1,\ldots,D_m$ and $f\in B$ is the image of  
$\tilde f$ then $f\in k$.  It is clear that $\tilde f$ is $\hat\BG_a^m$-invariant. So $f$ is $\hat\BG_a^m$-invariant. By Lemma~\ref{l:hat G_a}(ii), this means that $f\in k$.
\end{proof}

\section{Isocrystals}  \label{s:Isoc}
\subsection{A class of schemes}  \label{ss:noetherian morally smooth}
\begin{prop} \label{p:noetherian morally smooth}
The following properties of an  $\BF_p$-scheme $X$  are equivalent:

(i) $X$ is  Noetherian and Frobenius-smooth (see Lemma~\ref{l:morally smooth} and the sentence after it);

(ii) $X$ is  Noetherian, reduced, and $\Omega^1_X$ is locally free and finitely generated.

These properties imply that the scheme $X$ is regular and excellent.
\end{prop}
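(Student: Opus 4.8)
The plan is to prove the two implications separately and then read off regularity and excellence from the characterization by Frobenius; since all the conditions occurring in (i) and (ii) are local on $X$, throughout I take $X=\Spec B$ with $B$ Noetherian. The implication (i)$\Rightarrow$(ii) is the soft direction. If $B$ has a finite $p$-basis $x_1,\dots,x_d$ then, as recorded in the remark following Lemma~\ref{l:morally smooth}, $B$ is formally smooth over $\BF_p$ with cotangent complex free and concentrated in degree $0$ (see \cite[\S1.1.1]{BM} and \cite[Lemma~1.1.2]{dJ}); explicitly $\Omega^1_{B/\BF_p}$ is free with basis $dx_1,\dots,dx_d$, so $\Omega^1_X$ is locally free of finite rank, which is (ii).

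The implication (ii)$\Rightarrow$(i) is the heart of the matter. After localizing I may assume $\Omega^1_{B/\BF_p}$ is free of rank $d$, and I choose $x_1,\dots,x_d\in B$ with $dx_1,\dots,dx_d$ a basis; the goal is that these form a $p$-basis, i.e.\ that the Frobenius-semilinear map $\bigoplus_{\alpha\in J}B\to B$, $(b_\alpha)_\alpha\mapsto\sum_\alpha b_\alpha^{\,p}x^\alpha$, is bijective. I would first record the identification $\Omega^1_{B/\BF_p}=\Omega^1_{B/B^p}$, valid because $d(b^p)=0$ kills the image of $\Omega^1_{B^p}\otimes_{B^p}B$; thus (ii) is really a statement about the extension $B^p\subset B$. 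I would then extract $F$-finiteness: the conormal surjection $\Omega^1_{B/\BF_p}\otimes_B\kappa(\fp)\twoheadrightarrow\Omega^1_{\kappa(\fp)/\BF_p}$ bounds $\log_p[\kappa(\fp):\kappa(\fp)^p]\le d$ at each point, so all residue fields are $F$-finite, and this must be upgraded to module-finiteness of $B$ over $B^p$ with the help of the Noetherian hypothesis.

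With $F$-finiteness available, the decisive move is to convert freeness of $\Omega^1$ into flatness of the Frobenius $\Fr\colon B\to B$. Once $\Fr$ is flat, Kunz's theorem (a Noetherian $\BF_p$-algebra is regular if and only if its Frobenius is flat) shows that $B$ is regular; flat plus finite makes $B$ locally free over $B^p$, and comparing ranks through the fibres (where the claim reduces to the field case already controlled by $\dim_\kappa\Omega^1_\kappa$) shows the rank is $p^d$ with basis $\{x^\alpha\}_{\alpha\in J}$. This is exactly the assertion that $x_1,\dots,x_d$ is a $p$-basis, proving (i).

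The step I expect to be the \emph{main obstacle} is precisely this passage from ``$\Omega^1$ locally free'' to ``$\Fr$ flat / syntomic.'' Local freeness of the K\"ahler differentials is by itself too weak an invariant in characteristic $p$, and the argument must use $F$-finiteness together with the exact rank of $\Omega^1$ to exclude the degenerate behaviour in which $\Fr$ fails to be flat; this is the one place where more than the formal calculus of differentials is needed. Granting (i)$\Leftrightarrow$(ii), the final sentence is then immediate: regularity is Kunz's theorem (equivalently, formal smoothness over the perfect ring $\BF_p$ together with the Noetherian hypothesis), and excellence holds because a finite $p$-basis makes $B$ module-finite over $B^p$, hence $F$-finite, and $F$-finite Noetherian rings are excellent by Kunz.
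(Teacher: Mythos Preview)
The paper's own proof is essentially by citation: (i)$\Rightarrow$(ii) is declared clear, (ii)$\Rightarrow$(i) is Theorem~1 of Tyc \cite{Ty} (whose proof rests on Fogarty \cite[Prop.~1]{Fog}), and regularity and excellence are read off from Kunz's theorems \cite[\S 42, Thms.~107--108]{Mat}. Your treatment of (i)$\Rightarrow$(ii) and of the final sentence (regularity via Kunz, excellence via $F$-finiteness) agrees with the paper. For (ii)$\Rightarrow$(i) you are, in effect, trying to reconstruct Tyc's argument rather than cite it, and you correctly single out the passage ``$\Omega^1$ locally free $\Rightarrow$ Frobenius flat'' as the crux.

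There is, however, a second gap that you underestimate. You write that $F$-finiteness ``must be upgraded to module-finiteness of $B$ over $B^p$ with the help of the Noetherian hypothesis,'' starting from the observation that all residue fields $\kappa(\fp)$ are $F$-finite. But for a Noetherian $\BF_p$-algebra, $F$-finiteness of all residue fields does \emph{not} imply $F$-finiteness of $B$; there exist Noetherian local rings of characteristic $p$ with perfect residue field that are not $F$-finite. What one actually needs is that \emph{finite generation of $\Omega^1_{B/\BF_p}$ itself} forces $B$ to be finite over $B^p$, and this is a genuine theorem rather than bookkeeping---it is precisely the input from Fogarty that the paper flags as underlying Tyc's proof. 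So your two obstacles (``get $F$-finite'' and ``get $\Fr$ flat'') are really one intertwined problem, and neither reduces to formal manipulations with differentials; once that joint step is granted, your endgame (flat $+$ finite $\Rightarrow$ locally free of rank $p^d$ with basis $\{x^\alpha\}$ by a fibre-rank count and Nakayama) is fine.
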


\begin{proof}
It is clear that (i)$\Rightarrow$(ii). 
The implication (ii)$\Rightarrow$(i) follows from  Theorem~1 of \cite{Ty}, whose proof is based on \cite[Prop.~1]{Fog}; the reducedness assumption in (ii) is necessary because the definition of ``having a $p$-basis'' used in \cite{Ty} is different from the one from Lemma~\ref{l:morally smooth}.
E.~Kunz proved that property (i) implies regularity and excellence, see Theorems 107-108 of 
\cite[\S 42]{Mat}.
\end{proof}

\emph{Throughout \,\S\ref{s:Isoc}, we assume that $X$ has the equivalent properties of Proposition~\ref{p:noetherian morally smooth}.}
For instance, if $k$ is a perfect field of characteristic $p$ one can take $X$ to be either $\Spec k[[x_1,\ldots ,x_n]]$ or a quasi-compact smooth $k$-scheme.

\subsubsection{The ring of constants}  \label{sss:ring of constants}
In \ref{sss:2ring of constants} we defined the ring of constants $k$ by $k:=\bigcap\limits_{n=1}^\infty A^{p^n}$, where $A$ is the ring of regular functions on $X$.
If $X$ has the properties of  Proposition~\ref{p:noetherian morally smooth} then $k=\bigcap\limits_{n=1}^\infty E^{p^n}$, where $E$ is the ring of rational functions on $X$: indeed, if
$f\in\bigcap\limits_{n=1}^\infty E^{p^n}$and $D$ is the divisor of poles of $f$ then $D$ is divisible by $p^n$ for all $n\in\BN$, so $D=0$. Thus if $X$ is irreducible then the ring of constants is a perfect field; in general, it is a product of perfect fields.

\subsection{Coherent crystals and isocrystals}  \label{ss:CohIsoc}
Let $X$ be an $\BF_p$-scheme that has the equivalent properties of Proposition~\ref{p:noetherian morally smooth}. 
By a \emph{coherent crystal} on an $\BF_p$-scheme $X$ we mean a crystal of finitely generated\footnote{Let us recall that according to the definition from EGA, ``finitely generated" really means ``locally finitely generated".} quasi-coherent $\cO$-modules on the absolute crystalline site of $X$. 

Let $\sG$ be the groupoid on $W(X_{\perf})$ constructed in Theorem~\ref{t:crystals via stacks}(i). We have the categories
\[
\QCoh_{\fin}^\flat (W(X_{\perf})/\sG )\subset\QCoh_{\fin} (W(X_{\perf})/\sG )\subset\QCoh (W(X_{\perf})/\sG ),
\]
(we are using the notation of \S\ref{sss:qcoh notation}-\ref{sss:qcoh-fin}).

\begin{lem}    \label{l:CohCrys}
(i) The equivalence \eqref{sss:in one direction} identifies the category of coherent crystals on $X$ with $\QCoh_{\fin}  (W(X_{\perf})/\sG )$.

(ii) The category $\QCoh_{\fin} (W(X_{\perf})/\sG )$ is abelian.

(iii) $\QCoh_{\fin}^\flat  (W(X_{\perf})/\sG )\otimes\BQ=\QCoh_{\fin} (W(X_{\perf})/\sG )\otimes\BQ$
\end{lem}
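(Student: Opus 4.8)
The plan is to prove all three assertions on the Noetherian model $\sX$ of \S\ref{sss:sXbullet} rather than directly on $W(X_{\perf})$, transporting everything along the faithfully flat morphism $\tilde\pi\colon W(X_{\perf})\to\sX$ of \S\ref{sss:tildepi}. By Remark~\ref{r:iso of stacks} the latter induces an isomorphism $W(X_{\perf})/\sG\iso\sX/\sR$, hence $\QCoh(W(X_{\perf})/\sG)\simeq\QCoh(\sX/\sR)$. Since finite generation of a quasi-coherent module descends along faithfully flat morphisms (applied mod $p$, where $\tilde\pi$ becomes the faithfully flat map $X_{\perf}\to X$), this equivalence restricts to $\QCoh_{\fin}(W(X_{\perf})/\sG)\simeq\QCoh_{\fin}(\sX/\sR)$ and likewise on the $\BZ_p$-flat finitely generated subcategories. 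The point of working on $\sX$ is that $\sB:=H^0(\sX,\cO_\sX)$ is Noetherian (it is $p$-adically complete and $\BZ_p$-flat with $\sB/p\sB=B$ Noetherian), whereas $W(X_{\perf})$ is not; consequently $\QCoh_{\fin}(\sX)$ is equivalent to the abelian category of finitely generated $\sB$-modules.

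For (i), I would combine Theorem~\ref{t:crystals via stacks}(ii) with Lemma~\ref{l:"formal smoothness"}. A coherent crystal $M$ has, by definition, finitely generated value on every object of $\Cris(X)$; in particular its value $M_\sX\in\QCoh(\sX)$ on the thickenings $\sX\otimes(\BZ/p^r\BZ)$ is finitely generated, so $M_\sX\in\QCoh_{\fin}(\sX/\sR)$. Conversely, if $M_\sX$ is finitely generated, then for an arbitrary object $T$ of $\Cris(X)$ Lemma~\ref{l:"formal smoothness"} provides a morphism $T\to\sX\otimes(\BZ/p^r\BZ)$, and the Cartesian (crystal) property identifies $M_T$ with the pullback of $M_{\sX\otimes(\BZ/p^r\BZ)}$, which is finitely generated; hence $M$ is a coherent crystal. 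Transporting back along $\tilde\pi$ gives the identification with $\QCoh_{\fin}(W(X_{\perf})/\sG)$.

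For (ii), I would use the flatness of the groupoid. The two structure morphisms $s,t\colon\sR\to\sX$ are flat by Corollary~\ref{c:flatness by BM}, so $s^*$ and $t^*$ are exact. Given a morphism $f\colon M\to N$ in $\QCoh_{\fin}(\sX/\sR)$, form its kernel and cokernel in the abelian category $\QCoh_{\fin}(\sX)$; applying the exact functors $s^*,t^*$, the equivariance isomorphisms $\theta_M,\theta_N$ restrict and descend to equivariance data on $\Ker f$ and $\Coker f$, and the cocycle conditions are inherited. Since the forgetful functor to $\QCoh_{\fin}(\sX)$ is faithful and exact and reflects isomorphisms, the image--coimage axiom passes upward, and $\QCoh_{\fin}(\sX/\sR)\simeq\QCoh_{\fin}(W(X_{\perf})/\sG)$ is abelian.

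For (iii), note the inclusion $\QCoh_{\fin}^\flat\subset\QCoh_{\fin}$ is fully faithful, so after $\otimes\BQ$ it suffices to prove essential surjectivity, i.e.\ that every finitely generated object is isogenous to a $\BZ_p$-flat one. Working on $\sX$, let $M_{\mathrm{tors}}\subset M$ be the $p$-power-torsion submodule; it is finitely generated over the Noetherian ring $\sB$, hence killed by a single $p^N$, and being functorial it is an $\sR$-equivariant submodule. Then $M/M_{\mathrm{tors}}$ is finitely generated, $\sR$-equivariant, and $p$-torsion-free, and a short computation of $\Tor^{\BZ/p^r\BZ}_1(\BZ/p\BZ,-)$ shows that a $p$-torsion-free finitely generated $\sB$-module is $\BZ_p$-flat in the sense of \S\ref{sss:qcoh flatness}. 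The surjection $M\epi M/M_{\mathrm{tors}}$ has kernel annihilated by $p^N$, so it becomes an isomorphism in $\QCoh_{\fin}(\sX/\sR)\otimes\BQ$, which yields the desired equality. The main obstacle throughout is precisely that $W(X_{\perf})$ is non-Noetherian, so the abelian structure of (ii) and the boundedness of the torsion in (iii) are only visible on the model $\sX$; the work lies in checking that finite generation, kernels and cokernels, and the torsion submodule all transfer correctly between the two models, which is exactly what faithful flatness of $\tilde\pi$ and flatness of the groupoid guarantee.
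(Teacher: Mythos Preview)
Your approach is essentially the paper's: the paper's proof reads in full ``By Remark~\ref{r:iso of stacks}, $W(X_{\perf})/\sG$ is locally isomorphic to a quotient of a Noetherian formal scheme by a flat groupoid. This implies (ii--iii). Statement (i) follows from Lemma~\ref{l:"formal smoothness"}.'' You have correctly unpacked these three sentences, with the same reduction to the Noetherian model $\sX/\sR$ and the same appeal to Lemma~\ref{l:"formal smoothness"}.

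One small point: the model $\sX$ of \S\ref{sss:sXbullet} is only constructed after assuming $X=\Spec B$ with $B$ admitting a finite $p$-basis, so strictly speaking your argument proves the lemma in that case. The paper's wording ``\emph{locally} isomorphic'' signals that for general Noetherian Frobenius-smooth $X$ one covers by such affines; since coherence of a crystal, membership in $\QCoh_{\fin}$, the formation of kernels and cokernels, and the $p$-power torsion submodule are all Zariski-local constructions, the passage from the affine case to the general case is routine, but you should say so explicitly.
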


\begin{proof}
By Remark~\ref{r:iso of stacks}, $W(X_{\perf})/\sG$ is locally isomorphic to a quotient of a Noetherian formal scheme by a flat groupoid. This implies (ii-iii). Statement (i) follows from Lemma~\ref{l:"formal smoothness"}.
\end{proof}

\begin{defin}  \label{def:Isoc}
Objects of the category
\[
\Isoc (X):=\QCoh_{\fin} (W(X_{\perf})/\sG )\otimes\BQ=\QCoh_{\fin}^\flat  (W(X_{\perf})/\sG )\otimes\BQ
\]
will be called \emph{isocrystals} on $X$.
\end{defin}

(Thus our isocrystals are not necessarily convergent in the sense of \cite{O}.)

\subsection{Local projectivity}   
\begin{prop}  \label{l:local freeness}
Let $\sX$ be a $\BZ_p$-flat $p$-adic formal scheme with $\sX\otimes_{\BZ_p}\BF_p=X$. 
Let $M$ be a coherent crystal on $X$ and $M_{\sX}$ the corresponding coherent $\cO_{\sX}$-module. Then the $(\cO_{\sX}\otimes\BQ )$-module $M_{\sX}\otimes\BQ$ is locally projective. 
\end{prop}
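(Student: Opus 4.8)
The plan is to reduce the statement to a local, connection-theoretic assertion and then to exploit the rigidity of integrable connections in characteristic $0$, which becomes available after inverting $p$. Since local projectivity is local on $\sX$, I would first assume $X=\Spec B$ with $B$ carrying a finite $p$-basis $x_1,\dots,x_d$, and write $\tilde B:=H^0(\sX,\cO_\sX)$, a $p$-adically complete, $\BZ_p$-flat lift of $B$. Then $\tilde B$ is Noetherian (it is complete along the finitely generated ideal $(p)$, with Noetherian quotient $B$) and regular (because $\tilde B/p=B$ is regular and $p$ is a nonzerodivisor). Choosing lifts $\tilde x_i\in\tilde B$ of the $x_i$, the forms $d\tilde x_i$ give a basis of $\Omega^1_{\tilde B/\BZ_p}$, with dual derivations $D_i$, exactly as in \propref{p:constants}. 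The coherent crystal $M$ evaluates on the thickenings $\sX\otimes(\BZ/p^r\BZ)$ to the coherent $\tilde B$-module $M_\sX$ together with its integrable crystalline connection $\nabla\colon M_\sX\to M_\sX\otimes_{\tilde B}\Omega^1_{\tilde B/\BZ_p}$. It therefore suffices to prove that the finitely generated $\tilde B[1/p]$-module $M_\sX[1/p]$, with its induced integrable connection, is locally free; note that $\tilde B[1/p]$ is a regular (hence reduced and normal) Noetherian $\BQ_p$-algebra, so we are now in a characteristic-$0$ situation.

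The engine of the proof is the standard fact that a finitely generated module over a \emph{reduced} Noetherian ring whose fibre dimension $\mathfrak p\mapsto\dim_{\kappa(\mathfrak p)}\bigl(M_\sX[1/p]\otimes\kappa(\mathfrak p)\bigr)$ is locally constant is locally free. Since $\tilde B[1/p]$ is normal, it is a finite product of domains, so its connected and irreducible components coincide and are disjoint; hence it is enough to show that this fibre dimension is constant on each component. I would deduce this constancy from the connection by showing that every Fitting ideal $\mathrm{Fitt}_r\bigl(M_\sX[1/p]\bigr)$ is \emph{horizontal}, i.e. $D_i(\mathrm{Fitt}_r)\subseteq\mathrm{Fitt}_r$ for all $i$. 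This comes from the stratification encoded by $\nabla$: over the first infinitesimal neighbourhood of the diagonal the two pullbacks $p_1^*M_\sX$ and $p_2^*M_\sX$ are identified, so their Fitting ideals agree, and comparing $p_1^*g=g$ with $p_2^*g=g+\sum_i D_i(g)\,\epsilon_i$ for $g\in\tilde B[1/p]$ forces each $D_i$ to preserve the Fitting ideal.

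The hard part is to rule out an actual jump, and this is precisely where characteristic $0$ is indispensable. Fix a component with generic fibre rank $r$; the jump locus $Z:=\Spec\bigl(\tilde B[1/p]/\mathrm{Fitt}_r\bigr)$ meets this component in a horizontal closed subset not containing its generic point. If this intersection were nonempty, I would localize at a height-one prime $\mathfrak p$ minimal over the ideal of $Z$, obtaining a discrete valuation ring with uniformizer $\pi$ and $\mathrm{Fitt}_r\cdot\cO_{\mathfrak p}=(\pi^m)$ for some $m\ge 1$. Horizontality gives $D_i(\pi^m)=m\,\pi^{m-1}D_i(\pi)\in(\pi^m)$, and since $m$ is invertible in $\BQ_p$ this yields $D_i(\pi)\in(\pi)=\mathfrak p\cO_{\mathfrak p}$ for all $i$, whence $d\pi\equiv 0$ in $\Omega^1_{\tilde B[1/p]/\BQ_p}\otimes\kappa(\mathfrak p)$. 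But $\tilde B[1/p]$ is a regular $\BQ_p$-algebra of characteristic $0$, so at the codimension-one prime $\mathfrak p$ the conormal map $\mathfrak p/\mathfrak p^2\hookrightarrow\Omega^1_{\tilde B[1/p]/\BQ_p}\otimes\kappa(\mathfrak p)$ is injective and $d\pi\neq 0$ — a contradiction. Hence $Z$ misses every component, the fibre dimension is locally constant, and $M_\sX[1/p]$ is locally free, as claimed. The only genuinely delicate ingredient is this horizontal-ideal rigidity: it fails in characteristic $p$, and recovering it is exactly what the passage to isocrystals ($\otimes\BQ$) provides.
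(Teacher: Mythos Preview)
Your overall strategy---reduce to local, show the Fitting ideals are preserved by the derivations $D_i$ coming from the crystal structure, and then argue that a horizontal proper closed subset must be empty---matches the paper's. The gap is in the last step. You assert that the $d\tilde x_i$ form a basis of $\Omega^1_{\tilde B/\BZ_p}$; this is false in general, because $\tilde B$ is not of finite type over $\BZ_p$ (for instance, when $\tilde B=\BZ_p[[T]]$ the fraction field has infinite transcendence degree over $\BQ_p$, so the algebraic $\Omega^1$ is not finitely generated). What is true is that the \emph{$p$-adically completed} module $\hat\Omega^1_{\tilde B/\BZ_p}$ is free on the $d\tilde x_i$, and the $D_i$ are the dual basis of continuous derivations. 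Regularity of $\tilde B[1/p]$ together with characteristic zero does give injectivity of the conormal map $\mathfrak p/\mathfrak p^2\hookrightarrow\Omega^1_{\tilde B[1/p]/\BQ_p}\otimes\kappa(\mathfrak p)$ into the \emph{algebraic} K\"ahler differentials, but you then need the image of $d\pi$ to remain nonzero after projecting to the rank-$d$ quotient $\hat\Omega^1\otimes\kappa(\mathfrak p)$---equivalently, that some $D_i(\pi)$ is a unit at $\mathfrak p$. This does not follow from regularity alone: your argument is the standard one for algebras \emph{smooth} over a characteristic-$0$ field, and $\tilde B[1/p]$ is regular but not smooth (not of finite type) over $\BQ_p$. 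A secondary issue: a minimal prime over $\mathrm{Fitt}_r$ need not have height $1$, so the passage to a DVR is also unjustified as stated.

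The paper fills exactly this gap by using more than first-order information and by working in characteristic $p$. From $D_i(I_r)\subset I_r$ one deduces that the $p$-saturation $J_r\subset\tilde B$ is stable under all the divided-power operators $D_i^{(l)}=D_i^l/l!$, hence so is $J_r':=J_r/pJ_r\subset B$. This means $V(J_r')\subset\Spec B$ is invariant under the $\hat\BG_a^d$-action of \lemref{l:hat G_a}(i); by \lemref{l:hat G_a}(iii) one has $V(J_r')=(\Fr_X^n)^{-1}(Z_n)$ for every $n$, and then Noetherianity of $B$ (via Krull's intersection theorem at each point of $V(J_r')$) forces $V(J_r')$ to be open. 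So where you invoke a characteristic-$0$ Jacobian-type statement that is unavailable for non-smooth regular rings, the paper substitutes the interplay between the $\hat\BG_a^d$-action and Frobenius on the special fibre.
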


A proof will be given in \S\ref{ss:proof of local freeness}.

\begin{cor}  \label{c:local freeness}
Let $M\in\QCoh_{\fin} (W(X_{\perf})/\sG )$. Let $M_{W(X_{\perf})}$ be the corresponding module over~$\cO_{W(X_{\perf})}$. Then $M_{W(X_{\perf})}\otimes\BQ$ is a locally projective\footnote{A module over a sheaf of rings is said to be locally projective if it can be locally represented as a direct summand of a free module.} module over $\cO_{W(X_{\perf})}\otimes\BQ$.
\end{cor}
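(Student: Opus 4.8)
The plan is to deduce the corollary from Proposition~\ref{l:local freeness} by transporting the statement along the faithfully flat lift $\tilde\pi\colon W(X_{\perf})\to\sX$ of \S\ref{sss:tildepi}. Local projectivity can be checked locally on the topological space of $W(X_{\perf})$, which coincides with that of $X$; so I may assume $X=\Spec B$ for an $\BF_p$-algebra $B$ with a finite $p$-basis. In that case \S\ref{sss:sXbullet} provides a $\BZ_p$-flat lift $\sX$ of $X$ with $\sX\otimes_{\BZ_p}\BF_p=X$, and \S\ref{sss:tildepi} provides a flat morphism $\tilde\pi\colon W(X_{\perf})\to\sX$ lifting $\pi\colon X_{\perf}\to X$. (One cannot apply Proposition~\ref{l:local freeness} directly to $W(X_{\perf})$ itself, since $W(X_{\perf})\otimes_{\BZ_p}\BF_p=X_{\perf}\ne X$; the role of $\sX$ is precisely to provide a lift of $X$.)

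First I would identify the two modules attached to $M$. By Lemma~\ref{l:CohCrys}(i) the object $M\in\QCoh_{\fin}(W(X_{\perf})/\sG)$ is the same as a coherent crystal on $X$, which by \S\ref{sss:factorizing} also yields a coherent $\cO_{\sX}$-module $M_{\sX}$, i.e.\ an object of $\QCoh_{\fin}(\sX/\sR)$. By Remark~\ref{r:iso of stacks} the morphism $\tilde\pi$ induces the isomorphism of stacks $W(X_{\perf})/\sG\iso\sX/\sR$ of \eqref{e:iso of stacks}, and this isomorphism is compatible with $\tilde\pi$ in the sense that $\phi\circ a_{\sG}=a_{\sR}\circ\tilde\pi$, where $a_{\sG}, a_{\sR}$ are the atlas morphisms and $\phi$ is the stack isomorphism. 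Pulling $M$ back along these maps then gives $M_{W(X_{\perf})}=a_{\sG}^*\phi^*M=(a_{\sR}\circ\tilde\pi)^*M=\tilde\pi^*M_{\sX}$.

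Next I would invoke Proposition~\ref{l:local freeness} for the lift $\sX$: the $(\cO_{\sX}\otimes\BQ)$-module $M_{\sX}\otimes\BQ$ is locally projective, i.e.\ locally on $\sX$ it is a direct summand of a free module. Pulling back along $\tilde\pi$ preserves this property, since $\tilde\pi^*$ is additive, carries free modules to free modules, and commutes with passage to direct summands; moreover $\tilde\pi$ is a homeomorphism on underlying spaces (both being that of $X$), so an open cover of $\sX$ on which $M_{\sX}\otimes\BQ$ trivializes is literally an open cover of $W(X_{\perf})$. Hence $M_{W(X_{\perf})}\otimes\BQ=\tilde\pi^*(M_{\sX}\otimes\BQ)$ is locally a direct summand of a free $(\cO_{W(X_{\perf})}\otimes\BQ)$-module, which is the assertion.

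The only substantive input is Proposition~\ref{l:local freeness}; the remaining steps are the identification $M_{W(X_{\perf})}=\tilde\pi^*M_{\sX}$ and the (formal) stability of local projectivity under pullback. I expect the point requiring the most care to be this identification, i.e.\ verifying that the equivalence $\QCoh(\sX/\sR)\iso\QCoh(W(X_{\perf})/\sG)$ attached to \eqref{e:iso of stacks} sends the $\cO_{\sX}$-module underlying $M$ to its $\tilde\pi$-pullback; but this is exactly the compatibility recorded in Remark~\ref{r:iso of stacks}, coming from the Cartesian square \eqref{e:Cartesian by BM} and faithful flatness of $\tilde\pi$.
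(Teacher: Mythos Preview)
Your proposal is correct and follows essentially the same route as the paper: reduce to the local case, choose a lift $\sX$ of $X$ and a lift $\tilde\pi\colon W(X_{\perf})\to\sX$, identify $M_{W(X_{\perf})}$ with $\tilde\pi^*M_{\sX}$, and apply Proposition~\ref{l:local freeness}. The paper's proof is terser (it leaves the identification $M_{W(X_{\perf})}=\tilde\pi^*M_{\sX}$ and the stability of local projectivity under pullback implicit), but the argument is the same.
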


\begin{proof}
The statement is local on $X$. So by \cite[Prop.~1.2.6]{BM}, we can assume that the canonical morphism $X_{\perf}\to X$ lifts to a morphism $W(X_{\perf})\to\sX$, where 
$\sX\in\FSch$, $\sX\otimes_{\BZ_p}\BF_p=X$. 
It remains to apply Proposition~\ref{l:local freeness}. 
\end{proof}

\begin{cor}  \label{c:Tannakian}
 If $X$ is irreducible then $\Isoc (X)$ is a Tannakian category over $\Frac W(k)$, where $k$ is the field of constants of $X$ (in the sense of \S\ref{sss:ring of constants}). 
\end{cor}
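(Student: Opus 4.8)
The plan is to verify the standard axioms of a Tannakian category for $\Isoc(X)$. By Definition~\ref{def:Isoc}, an isocrystal is an object of $\QCoh_{\fin}^\flat(W(X_{\perf})/\sG)\otimes\BQ$, and by Corollary~\ref{c:local freeness} such an object is represented by a $\sG$-equivariant locally projective module over $\cO_{W(X_{\perf})}\otimes\BQ$. I would first observe that $\Isoc(X)$ is a $\BQ$-linear rigid symmetric monoidal abelian category: it is abelian by Lemma~\ref{l:CohCrys}(ii-iii) (localizing an abelian category at $\BQ$ keeps it abelian), the tensor product is the usual tensor product of equivariant modules, and rigidity (existence of duals) follows because a locally projective module of finite rank over the sheaf of rings $\cO_{W(X_{\perf})}\otimes\BQ$ admits a dual $\HHom(M,\cO)$, with the $\sG$-equivariant structure transported along. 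The unit object is $\cO\otimes\BQ$.

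The substantive point is to identify the endomorphism ring of the unit object, which must be the base field. By Proposition~\ref{p:constants}, $\End(\cO)=H^0_{\cris}(X,\cO)=W(k)$, where $k$ is the ring of constants; tensoring with $\BQ$ gives $\End_{\Isoc(X)}(\e)=W(k)\otimes\BQ=\Frac W(k)$ precisely when $k$ is a perfect field, i.e.\ when $X$ is irreducible (see \S\ref{sss:ring of constants}). So the hypothesis of irreducibility is exactly what guarantees that $\End(\e)$ is a field rather than a product of fields, and that $\Frac W(k)$ is the correct field of definition.

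The remaining axiom is the existence of a fiber functor, i.e.\ an exact faithful $\Frac W(k)$-linear tensor functor to the category of finite-dimensional vector spaces over some extension of $\Frac W(k)$; equivalently, by Deligne's intrinsic characterization, one must check that every object has dimension a nonnegative integer (so that $\otimes$-categorically $\Isoc(X)$ is Tannakian over the field $\Frac W(k)$). I would obtain this by pulling back along a point: choosing a closed point or a geometric point $x$ of $X$ and restricting an isocrystal to the corresponding $\Frac W(k)$-point of $(W(X_{\perf})/\sG)\otimes\BQ$ yields a finite-dimensional $\Frac W(k)$-vector space, and the rank is locally constant hence gives a well-defined nonnegative integer dimension; faithfulness on the connected (irreducible) base then gives a fiber functor after passing to a suitable extension.

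The main obstacle is producing an honest fiber functor (or verifying Deligne's integrality-of-dimension criterion) rather than merely the formal monoidal structure. The difficulty is that $W(X_{\perf})/\sG$ is a non-Artin formal stack whose structure sheaf is genuinely infinite-type, so one cannot naively evaluate at a point of $X$ itself; one must instead use the local lift $W(X_{\perf})\to\sX$ from Corollary~\ref{c:local freeness} together with the local projectivity of $M_{\sX}\otimes\BQ$ to reduce the rank computation to the underlying module over the Noetherian base $\sX$, where restriction to a point is unproblematic. Checking that this restriction is compatible with the groupoid action and is exact and tensor-compatible is where the real work lies; the rest is formal.
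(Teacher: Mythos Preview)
Your outline matches the paper's proof: abelianness from Lemma~\ref{l:CohCrys}(ii), rigidity from local projectivity (the paper phrases this via the map $\HHom(M,\cO)\otimes N\to\HHom(M,N)$ becoming an isomorphism after $\otimes\BQ$, then cites \cite[Prop.~2.3]{De90}), and $\End(\e)=\Frac W(k)$ from Proposition~\ref{p:constants}.

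Where you diverge is in the fiber functor, and here you are making life harder than necessary. The paper does not try to evaluate at a point of the stack $(W(X_{\perf})/\sG)\otimes\BQ$, nor does it invoke Deligne's dimension criterion. It simply uses functoriality of $\Isoc(-)$ in $X$: pick any morphism $\alpha:\Spec E\to X$ with $E$ a perfect field (e.g.\ the perfection of a residue field), and compose $\alpha^*:\Isoc(X)\to\Isoc(\Spec E)$ with the tautological equivalence $\Isoc(\Spec E)\simeq\{\Frac W(E)\text{-vector spaces}\}$. This is automatically an exact tensor functor; there is no issue with the stack being non-Artin, because the pullback is defined at the level of crystals (or equivalently, via the obvious map of simplicial formal schemes $\Acris_\bullet(\Spec E)\to\Acris_\bullet(X)$). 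Your ``main obstacle'' paragraph is solving a problem that does not arise.

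For faithfulness the paper argues exactly as you sketch: by Proposition~\ref{l:local freeness} the rank of $M$ is locally constant, so on the connected $X$, $\Phi_\alpha(M)=0$ forces the rank to be zero everywhere, hence $M=0$. So your instinct there is correct; you just need to route the fiber functor through $\Isoc(\Spec E)$ rather than through a point of the quotient stack.
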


\begin{proof}
The fact that $\Isoc (X)$ is abelian  follows from Lemma~\ref{l:CohCrys}(ii).

$W(X_{\perf})/\sG$ is locally isomorphic to a quotient of a Noetherian formal scheme by a flat groupoid, so
the tensor category  $\QCoh_{\fin} (W(X_{\perf})/\sG )$ has internal  $\HHom$'s. Proposition~\ref{l:local freeness} implies that for any $M,N\in\QCoh_{\fin} (W(X_{\perf})/\sG )$, the canonical morphism
\[
\HHom (M,\cO )\otimes N\to \HHom (M,N )
\]
becomes an isomorphism in the category $\Isoc (X):=\QCoh_{\fin} (W(X_{\perf})/\sG )\otimes\BQ$. So $\Isoc (X)$ is rigid by \cite[Prop. 2.3]{De90}.

By Proposition~\ref{p:constants}, the endomorphism ring of the unit object of $\Isoc (X)$ equals $\Frac W(k)$.  

Let $\alpha:\Spec E\to X$ be a point with $E$ being a perfect field. Let $K:=\Frac W(E)$. Let
\[
\Phi_\alpha:\Isoc (X)\to\{K\mbox{-vector spaces\}}
\]
be the functor obtained by composing $\alpha^*:\Isoc (X)\to \Isoc (\Spec E)$ with the obvious fiber functor $\Isoc (\Spec E)\to$\{$K$-vector spaces\}. 
Then $\Phi_\alpha$ is an exact tensor functor. Since $X$ is connected, Proposition~\ref{l:local freeness} implies that if $M\in\Isoc (X)$ and $\Phi_\alpha (M)=0$ then $M=0$. So $\Phi_\alpha$ is a fiber functor.
\end{proof}

\subsection{Proof of Proposition~\ref{l:local freeness}}   \label{ss:proof of local freeness}
The proposition is well known if $X$ is a scheme of finite type over a perfect field, see \cite[Cor.~2.9]{O} or \cite[\S 2.3.4]{berthelot-rigid}. The proof given in \emph{loc. cit.} uses the following fact: if $E$ is a field of characteristic 0 and $N$ is a finitely generated module over $E[[t_1,\ldots, t_n]]$ which admits a connection, then $N$ is free.
The proof given below is essentially the same (but organized using Fitting ideals). 

\subsubsection{Strategy}
We can assume that $X=\Spec B$, $\sX =\Spf\tilde B$, where $B$ and $\tilde B$ are as in the proof of Proposition~\ref{p:constants}. Choose an exact sequence of $\tilde B$-modules
\[
0\to N\to\tilde B^l\to M_{\sX}\to 0.
\]
For $r\in\{0,\ldots ,l\}$, let $I_r\subset\tilde B$ be the image of the canonical map
\begin{equation}  \label{e:Fitting}
\bigwedge\nolimits^{l-r}N\otimes\Hom(\bigwedge\nolimits^{l-r}\tilde B^l, \tilde B)\to\tilde B
\end{equation}
(so the ideals $I_0\subset\ldots I_{l-1}\subset I_l=\tilde B$ are the Fitting ideals of $M_{\sX}$). Let 
\[
J_r:=\{u\in\tilde B\,|\, p^ju\in I_r \mbox{ for some }j\}.
\]
Define the ideals $J'_0\subset\ldots J'_{l-1}\subset J'_l=B$ by $J'_r:=J_r/pJ_r\,$. 
The $(\cO_{\sX}\otimes\BQ )$-module $M_{\sX}\otimes\BQ$ is locally projective of constant rank $r$ if and only if $J'_r=B$ and $J'_i=0$ for $i<r$.
So to prove Proposition~\ref{l:local freeness}, it suffices to show that the closed subscheme $\Spec (B/J'_r)\subset\Spec B$ is open for each $r$.

We will use the notation from the proof of Proposition~\ref{p:constants}; in particular, we have the derivations $D_i:\tilde B\to\tilde B$ and the differential operators $D_i^{(l)}:\tilde B\to\tilde B$ and $D_i^{(l)}:B\to B$. 

\begin{lem}  \label{l:preserved by D_i}
$D_i (I_r)\subset I_r$ for all $i$ and $r$.
\end{lem}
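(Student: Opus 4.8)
The key input I would use is that $M$ being a crystal endows $M_{\sX}$ with an integrable connection. Indeed, via the standard equivalence between crystals on $X$ and $\cO_{\sX}$-modules with integrable connection (the first-order part of the stratification encoded by the pullbacks of $M$ to the PD-neighborhoods $\sX_n$, cf. \cite[\S 1.3]{BM}), and since $\Omega^1_{\tilde B}$ is free with basis $dx_1,\ldots,dx_m$, one obtains operators $\nabla_i:M_{\sX}\to M_{\sX}$ satisfying the Leibniz rule $\nabla_i(b s)=D_i(b)\,s+b\,\nabla_i(s)$ for $b\in\tilde B$, $s\in M_{\sX}$. Thus the lemma reduces to the purely algebraic assertion that the Fitting ideals of a finitely presented $\tilde B$-module equipped with such operators are stable under $D_i$. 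Note that we use one $\nabla_i$ at a time, so integrability plays no role here.

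First I would lift each $\nabla_i$ through the presentation $\pi:\tilde B^l\epi M_{\sX}$. Equip $\tilde B^l$ with the tautological connection $\tilde\nabla_i$ (zero on the standard basis, extended by Leibniz over $D_i$). The difference $\nabla_i\circ\pi-\pi\circ\tilde\nabla_i:\tilde B^l\to M_{\sX}$ is $\tilde B$-linear, so it lifts along $\pi$ to a $\tilde B$-linear endomorphism $\psi_i$ of $\tilde B^l$; then $\bar\nabla_i:=\tilde\nabla_i+\psi_i$ is a connection on $\tilde B^l$ over $D_i$ with $\pi\circ\bar\nabla_i=\nabla_i\circ\pi$. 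Consequently $\bar\nabla_i$ preserves $N=\Ker\pi$, since for $n\in N$ one has $\pi(\bar\nabla_i n)=\nabla_i(\pi n)=0$.

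Next I would propagate $\bar\nabla_i$ to the objects appearing in \eqref{e:Fitting}. The connection $\bar\nabla_i$ induces connections (all over $D_i$, all by Leibniz) on $\bigwedge^{l-r}\tilde B^l$, under which the image of $\bigwedge^{l-r}N$ is stable, and on the dual $\Hom(\bigwedge^{l-r}\tilde B^l,\tilde B)$, in such a way that the evaluation pairing into $\tilde B$ is horizontal: $D_i\langle\omega,\phi\rangle=\langle\bar\nabla_i\omega,\phi\rangle+\langle\omega,\bar\nabla_i^*\phi\rangle$. Taking $\omega$ in the image of $\bigwedge^{l-r}N$ and $\phi$ arbitrary, both terms on the right lie in $I_r$: the first because $\bar\nabla_i\omega$ is again in the image of $\bigwedge^{l-r}N$, the second because $\omega$ is and $\bar\nabla_i^*\phi$ is still an element of $\Hom(\bigwedge^{l-r}\tilde B^l,\tilde B)$. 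Since the elements $\langle\omega,\phi\rangle$ generate the ideal $I_r$ and $D_i$ is a derivation, $D_i(I_r)\subset I_r$ follows. The only genuinely non-formal point is the input of the first paragraph, namely that the crystal structure produces the connection operators $\nabla_i$ compatibly with the derivations $D_i$ on $\tilde B$; everything afterward is a routine manipulation of connections on free modules and their exterior powers.
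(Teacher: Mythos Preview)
Your argument is correct and follows the same approach as the paper: lift the connection on $M_{\sX}$ (coming from the crystal structure) to a connection $\bar\nabla_i=D_i+\psi_i$ on $\tilde B^l$ compatible with the surjection, observe that it preserves $N$, and deduce that the Fitting map \eqref{e:Fitting} is horizontal. The paper states this last step more tersely (``\eqref{e:Fitting} becomes a horizontal morphism, so its image $I_r$ is preserved''), whereas you spell out the passage to exterior powers and duals; both also note that integrability of the connection is irrelevant.
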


\begin{proof}
Our $M_{\sX}$ is a module over the ring of differential operators $B[D_1,\ldots , D_m]$. 
For each $i$ there exists a $\tilde B$-linear operator $L_i:\tilde B^l\to\tilde B^l$ such that the map $f:\tilde B^l\to M_{\sX}$ satisfies the identity
\[
D_i\circ f=f\circ\nabla_i \,, \mbox{ where } \nabla_i:=D_i+L_i\, .
\]
Then $\nabla_i (N)\subset N$. Think of $\nabla$ as a (not necessarily integrable) connection on $\tilde B^l$ and $N$. Equip $\tilde B$ with the trivial connection. Then \eqref{e:Fitting} becomes a horizontal morphism, so its image $I_r$ is preserved by the operators $D_i\,$.
\end{proof}

Our goal is to show that the closed subscheme $\Spec (B/J'_r)\subset\Spec B$ is open for each $r$. Since $B$ is Noetherian, it suffices to prove the following

\begin{lem}  
For each $r,n\in\BN$ there exists a closed subscheme $Z\subset\Spec B=X$ such that $\Spec (B/J'_r)=(\Fr^n_X)^{-1}(Z)$.
\end{lem}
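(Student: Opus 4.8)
The plan is to identify the assertion with the $\hat\BG_a^m$-invariance criterion of Lemma~\ref{l:hat G_a}(iii). Indeed, the statement to be proved is verbatim the condition appearing there, so it suffices to show that the closed subscheme $\Spec(B/J'_r)\subset X$ is stable under the $\hat\BG_a^m$-action of Lemma~\ref{l:hat G_a}(i). Recall from the proof of Proposition~\ref{p:constants} that this action is the one determined by the commuting divided-power operators $D_i^{(l)}\colon B\to B$; concretely the coaction sends $f$ to $\sum D_1^{(l_1)}\cdots D_m^{(l_m)}(f)\,t_1^{l_1}\cdots t_m^{l_m}$, so a closed subscheme is invariant precisely when its defining ideal is carried into itself by every $D_i^{(l)}$. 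Thus everything reduces to the inclusions $D_i^{(l)}(J'_r)\subset J'_r$ in $B$.

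I would establish these by lifting to $\tilde B$ and proving $D_i^{(l)}(J_r)\subset J_r$. Lemma~\ref{l:preserved by D_i} gives $D_i(I_r)\subset I_r$, and since $J_r$ is the $p$-saturation of $I_r$ this immediately upgrades to $D_i(J_r)\subset J_r$: if $p^j u\in I_r$ then $p^j D_i(u)=D_i(p^j u)\in I_r$. The main obstacle is the division by $l!$ built into $D_i^{(l)}=(l!)^{-1}D_i^l$: stability of $J_r$ under $D_i$ only yields $D_i^l(J_r)\subset J_r$, not directly $(l!)^{-1}D_i^l(J_r)\subset J_r$. I would circumvent this by working rationally. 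In $\tilde B\otimes\BQ$ one has $I_r\otimes\BQ=J_r\otimes\BQ$ by the definition of the saturation, and since $D_i$ preserves $I_r\otimes\BQ$ so does each $D_i^{(l)}$. Now take $u\in J_r$; the element $D_i^{(l)}(u)$ lies in $\tilde B$ (the operators $D_i^{(l)}$ preserve $\tilde B$, as noted in the proof of Proposition~\ref{p:constants}) and also in $J_r\otimes\BQ$. Because $J_r$ is $p$-saturated we have $J_r=(J_r\otimes\BQ)\cap\tilde B$, whence $D_i^{(l)}(u)\in J_r$, giving $D_i^{(l)}(J_r)\subset J_r$.

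Finally I would descend modulo $p$. The $p$-saturation of $J_r$ gives $J_r\cap p\tilde B=pJ_r$, so $J'_r=J_r/pJ_r$ embeds as an ideal of $B=\tilde B/p\tilde B$; moreover each $D_i^{(l)}$ is $\BZ_p$-linear and preserves $\tilde B$, hence descends to the operator $D_i^{(l)}$ on $B$ and, by the previous paragraph, carries $J'_r$ into $J'_r$. Therefore $\Spec(B/J'_r)$ is $\hat\BG_a^m$-invariant, and Lemma~\ref{l:hat G_a}(iii) produces for each $n$ the required closed subscheme $Z$ with $\Spec(B/J'_r)=(\Fr^n_X)^{-1}(Z)$.
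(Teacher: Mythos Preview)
Your proof is correct and follows essentially the same route as the paper: reduce to showing $D_i^{(l)}(J'_r)\subset J'_r$ and then invoke Lemma~\ref{l:hat G_a}(iii). The paper's argument is terser (it simply asserts that Lemma~\ref{l:preserved by D_i} implies $D_i^{(l)}(J_r)\subset J_r$), while you supply the natural justification via $J_r=(I_r\otimes\BQ)\cap\tilde B$; this is exactly the detail the paper leaves implicit.
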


\begin{proof}
Lemma~\ref{l:preserved by D_i} implies that $D_i^{(l)} (J_r)\subset J_r$ for all $i$ and $l$ (recall that  $D_i^{(l)}= D_i^l/l!\,$). So $D_i^{(l)} (J'_r)\subset J'_r\,$.
This means that the closed subscheme $\Spec B/J'_r\subset\Spec B$ is preserved by the $\hat\BG_a^m$-action from Lemma~\ref{l:hat G_a}(i).
It remains to use Lemma~\ref{l:hat G_a}(iii).
\end{proof}

\subsection{Isocrystals as vector bundles}   \label{ss:Bun_Q}
\subsubsection{The category $\Bun_{\BQ}(\sY )$}   \label{sss:Bun_Q}
Let $\FSch$ denote the category of $\BZ_p$-flat $p$-adic formal schemes. 
For $\sY\in\FSch$, let $\Bun_{\BQ}(\sY )$ denote the category of finitely generated locally projective $(\cO_{\sY}\otimes\BQ )$-modules.

We secretly think of $\Bun_{\BQ}(\sY )$ as the category of vector bundles on the ``generic fiber" $\sY\otimes\BQ$, which can hopefully be defined as some kind of ana\-lytic space (maybe in the sense of R.~Huber, see \cite{Hub} and \cite[\S 2.1]{SW}). This is true if $\sY$ is a formal scheme of finite type over $W(k)$, where $k$ is a perfect field of characteristic $p$ (moreover, in this case $\sY\otimes\BQ$ can be understood as an analytic space in the sense of Tate or Berkovich). But we need non-Noetherian formal schemes (e.g., the formal scheme $W(X_{\perf})$ from \S\ref{ss:CohIsoc}).

\begin{prop}  \label{p:global projectivity}
Suppose that $\sY\in\FSch$ is affine. Then any object of $\Bun_{\BQ}(\sY )$ is a direct summand of $\cO_{\sY}^n\otimes\BQ$ for some $n\in\BN$. 
\end{prop}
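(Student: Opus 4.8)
The plan is to realize $M$ as the image of a free module and then split the resulting surjection. Write $\sY=\Spf R$ with $R$ a $p$-torsion-free $p$-adically complete ring, and set $A:=\cO_{\sY}\otimes\BQ$. First I would establish global generation. Choose an integral model, i.e.\ a finitely generated $\BZ_p$-flat coherent sheaf $\cM\in\QCoh^\flat_{\fin}(\sY)$ with $\cM\otimes\BQ=M$; such a lattice exists because $M$ is, locally, a direct summand of a free $A$-module of finite rank. The reduction $\cM/p\cM$ is a finitely generated quasi-coherent sheaf on the affine scheme $\Spec(R/p)$, hence is generated by finitely many global sections $\bar m_1,\dots,\bar m_n$. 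Since the transition maps $\Gamma(\sY,\cM/p^{k+1}\cM)\to\Gamma(\sY,\cM/p^{k}\cM)$ are surjective (affineness) and $\Gamma(\sY,\cM)=\varprojlim_k\Gamma(\sY,\cM/p^k\cM)$, the $\bar m_i$ lift to sections $m_i\in\Gamma(\sY,\cM)$. Because $\Gamma(\sY,\cM)$ is $p$-adically complete and the $m_i$ generate it modulo $p$, a successive-approximation (complete Nakayama) argument shows that the $m_i$ generate $\Gamma(\sY,\cM)$, and hence generate $\cM$ as a sheaf; inverting $p$ yields a surjection $\phi\colon A^n\twoheadrightarrow M$.

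It then remains to split $\phi$. Let $\cK:=\ker(\phi)$; as the kernel of a surjection between finite locally free $A$-modules it is again finite locally free. Because $M$ is locally projective, the functor $\HHom_A(M,-)$ is exact, so applying it to $0\to\cK\to A^n\to M\to 0$ produces a short exact sequence of sheaves, and the only obstruction to lifting $\id_M\in\Gamma(\sY,\EEnd_A(M))$ to a section in $\Gamma(\sY,\HHom_A(M,A^n))$ lies in $H^1(\sY,\HHom_A(M,\cK))$. The key point is that this $H^1$ vanishes. For any $\cF\in\QCoh(\sY)$ with $\sY$ affine one has $R\Gamma(\sY,\cF)=R\varprojlim_k\Gamma(\Spec(R/p^k),\cF/p^k\cF)$ with all transition maps surjective, so $R^1\varprojlim$ vanishes and the cohomology is concentrated in degree $0$. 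Since $\HHom_A(M,\cK)$ admits a $\BZ_p$-flat integral model and cohomology commutes with $-\otimes\BQ$ on the quasi-compact space $\sY$, its $H^1$ vanishes as well. Hence $\id_M$ lifts to a section $s\colon M\to A^n$ with $\phi\circ s=\id_M$, exhibiting $M$ as a direct summand of $A^n$.

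I expect the main obstacle to be this cohomological vanishing in the splitting step: in the absence of Noetherian hypotheses on $R$ one must check that $\HHom_A(M,\cK)$ is controlled well enough, via a $\BZ_p$-flat integral model with surjective transition maps, for the Mittag-Leffler argument to apply, and that forming $-\otimes\BQ$ genuinely commutes with cohomology on $\sY$. A secondary subtlety is the existence of the integral lattice $\cM$ in the global-generation step, where the local direct-summand-of-free descriptions of $M$ must be assembled into a single finitely generated $\BZ_p$-flat sheaf.
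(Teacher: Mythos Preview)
Your outline is natural, but the step you label a ``secondary subtlety'' is in fact the heart of the matter, and your one-line justification for it is not a proof. You assert that an integral model $\cM\in\QCoh^\flat_{\fin}(\sY)$ with $\cM\otimes\BQ=M$ exists ``because $M$ is, locally, a direct summand of a free $A$-module of finite rank.'' Locally one can certainly write down lattices $\cM_i\subset M|_{U_i}$, but on overlaps these agree only up to isogeny, not on the nose; there is no evident way to glue them to a single object of $\QCoh^\flat_{\fin}(\sY)$. In the paper this is exactly the content of Lemma~\ref{l:bundles as fg modules}, whose proof invokes the Karoubian property (Proposition~\ref{p:Karoubian}) together with fpqc descent for $\QCoh^\flat_{\fin}\otimes\BQ$ (Corollary~\ref{c:descent for QCohQ}); the latter in turn rests on Proposition~\ref{p:equivariance up to isogeny}, an Ogus-type argument showing that descent data given only up to isogeny can be rigidified to honest descent data. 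None of this is automatic.

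The same gap reappears in your splitting step. To run the $H^1$-vanishing argument you need $\HHom_A(M,\cK)$ to admit a $\BZ_p$-flat integral model in $\QCoh(\sY)$. But $\HHom_A(M,\cK)$ is again just an object of $\Bun_\BQ(\sY)$, so producing such a model is the same problem you started with; and the naive candidate $\HHom_{\cO_\sY}(\cM,\cN)$ need not lie in $\QCoh(\sY)$ when $R$ is not Noetherian (the dual of a merely finitely generated module over $R/p^k$ need not be quasi-coherent). So what you flag as the ``main obstacle'' and the ``secondary subtlety'' are really the same obstacle, and it is the whole difficulty.

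For comparison, once the paper has the lattice (via Lemma~\ref{l:bundles as fg modules}) it does not argue cohomologically. It first uses descent of finiteness (Proposition~\ref{p:descent of finiteness}) to get a two-term presentation $(\cO_\sZ\otimes\BQ)^l\to(\cO_\sZ\otimes\BQ)^m\to M\to 0$, and then shows projectivity by a Fitting-ideal argument: Lemma~\ref{l:open-closed subschemes} and Corollary~\ref{c:open-closed subschemes} prove that the minors of the presentation matrix generate idempotent ideals. This route has the advantage of simultaneously yielding the descent statement (Proposition~\ref{p:descent for Bun_Q}), not just the affine global-projectivity.
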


The proof is given in \S\ref{ss:descent for Bun}.

\subsubsection{Flat descent for $\Bun_{\BQ}(\sY )$}   \label{sss:descent for Bun_Q}
Let $C^\bullet$ be a cosimplicial category, i.e., a functor from the simplex category $\Delta$ to the 2-category of categories. Then the projective limit of this functor is denoted by 
$\Tot (C^\bullet )$. So an object of $\Tot (C^\bullet )$ is an object of $C^0$ with an isomorphism between its two images in $C^1$ satisfying the cocycle condition (whose formulation in\-volves~$C^2$).

In particular, for a simplicial object $\sY_\bullet$ in $\FSch$ we have the category $\Tot (\Bun_\BQ (\sY_\bullet ))$.

\begin{prop}     \label{p:descent for Bun_Q}
Let $f:\sY\to\sZ$ be a faithfully flat\footnote{A morphism of $p$-adic formal schemes $f:\sY\to\sZ$ is said to be flat if for every $r\in\BN$ it induces a flat morphism $\sY\otimes (\BZ/p^r\BZ)\to\sZ\otimes (\BZ/p^r\BZ)$ (if $\sY$ and $\sZ$ are $\BZ_p$-flat it suffices to check this for $n=1$). We do \emph{not} require $f$ to induce flat morphisms 
$H^0(\sZ',\cO_{\sZ'})\to H^0(\sY',\cO_{\sY'})$, where $\sY'\subset\sY$ and $\sZ'\subset\sZ$ are open affines such that $\sY'\subset f^{-1}(\sZ')$.} quasi-compact morphism in $\FSch$.
Let $\sY_n$ be the fiber product (over $\sZ$) of $n+1$ copies of $\sY$ (in particular, $\sY_0=\sY$). Then the functor
\begin{equation}  \label{e:descent for Bun_Q}
\Bun_\BQ (\sZ )\to\Tot (\Bun_\BQ (\sY_\bullet ))
\end{equation}
is an equivalence.
\end{prop}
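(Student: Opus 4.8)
The plan is to reduce the statement to an affine Amitsur-style descent for finitely generated projective modules, and then to circumvent the failure of flatness after inverting $p$ by working modulo each power of $p$ and passing to the limit.

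First I would reduce to the case where both $\sZ$ and $\sY$ are affine. Since objects of $\Bun_\BQ$ are genuine sheaves of $(\cO\otimes\BQ)$-modules and both finite generation and local projectivity are Zariski-local conditions, the assignment $\sY\mapsto\Bun_\BQ(\sY)$ is a stack for the Zariski topology; this lets me assume $\sZ=\Spf R$ is affine. As $f$ is quasi-compact, $\sY$ is covered by finitely many affine opens, whose disjoint union $\sY'$ is again affine and faithfully flat over $\sZ$; since $\sY'\to\sY$ is a Zariski cover, the standard refinement argument in descent theory identifies $\Tot(\Bun_\BQ(\sY_\bullet))$ with $\Tot(\Bun_\BQ(\sY'_\bullet))$, so I may assume $\sY=\Spf S$ is affine as well. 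Writing $S_n$ for the $(n+1)$-fold completed tensor power of $S$ over $R$, so that $\sY_n=\Spf S_n$, and using \propref{p:global projectivity} to identify $\Bun_\BQ(\Spf S_n)$ with the category $\mathcal{P}(S_n\otimes\BQ)$ of finitely generated projective $(S_n\otimes\BQ)$-modules, the assertion becomes that
\[
\mathcal{P}(R\otimes\BQ)\to\Tot(\mathcal{P}(S_\bullet\otimes\BQ))
\]
is an equivalence. The essential difficulty throughout is that $R\otimes\BQ\to S\otimes\BQ$ need not be flat: by hypothesis $f$ is only flat modulo each $p^r$, not on global sections (cf. the footnote to the proposition).

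For full faithfulness, since every object of $\mathcal{P}(R\otimes\BQ)$ is a retract of a free module and a $\Hom$ into a retract is computed by applying an idempotent, it suffices to treat the unit object, i.e. to prove exactness of the rationalized Amitsur complex $0\to R\otimes\BQ\to S\otimes\BQ\to S_1\otimes\BQ$. I would prove this integrally: modulo $p^r$ the completed tensor powers coincide with the ordinary ones, so $S_\bullet/p^r$ is the Amitsur complex of the faithfully flat map $R/p^r\BZ\to S/p^r\BZ$ (faithful flatness modulo $p^r$ being built into the definition of a faithfully flat morphism in $\FSch$), which is exact by the classical argument. The transition maps of the inverse system $\{S_\bullet/p^r\}_r$ are surjective, so the inverse limit complex $0\to R\to S\to S_1\to\cdots$ is again exact; inverting $p$ gives the claim.

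For essential surjectivity I would descend an integral lattice and then use honest flat descent level by level. Given a descent datum $(P,\phi)$ over $S\otimes\BQ$, I first dispose of the projectivity question: local projectivity of a finitely generated module is a flat-local property, so it is enough to produce a finitely generated $\BZ_p$-flat $R$-module $M$ with $(M\otimes_R S)\otimes\BQ\cong P$ carrying the datum, after which \propref{p:global projectivity} and flat-local detection of projectivity show $M\otimes\BQ\in\Bun_\BQ(\sZ)$. To build $M$, choose a finitely generated $\BZ_p$-flat lattice $\Lambda\subset P$ with $\Lambda\otimes\BQ=P$; since $\Lambda$ is finitely generated, the two pullbacks of $\Lambda$ to $S_1$ are commensurable and $\phi$ carries one into $p^{-c}$ times the other for some $c$, so $\phi$ is integral up to a bounded power of $p$. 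Modulo $p^r$ the pair $(\Lambda/p^r,\phi)$ is then a descent datum for the faithfully flat map $R/p^r\BZ\to S/p^r\BZ$ up to a bounded error, and Grothendieck's faithfully flat descent for quasi-coherent sheaves descends it to $R/p^r\BZ$; passing to the inverse limit over $r$ produces $M$, and inverting $p$ trivialises the bounded errors and recovers $(P,\phi)$ together with its cocycle condition.

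The main obstacle is precisely this last step: because a rational descent datum need neither preserve an integral lattice nor satisfy the cocycle identity integrally, the integral descents produced modulo $p^r$ are only effective up to isogeny, and the real work is to control these bounded denominators uniformly in $r$ so that the corrected objects organise into a genuine inverse system whose limit, after $\otimes\BQ$, yields an honest object of $\Bun_\BQ(\sZ)$ inducing the given datum. Full faithfulness, once established, then guarantees that this descended object is unique and that the comparison isomorphism is canonical.
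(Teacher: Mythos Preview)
Your overall architecture matches the paper's: reduce to the affine case, prove full faithfulness via the integral Amitsur complex modulo each $p^r$, and for essential surjectivity first descend to an object of $\QCoh^\flat_{\fin}\otimes\BQ$ and then verify projectivity. But two steps in your sketch are genuine gaps, and the paper supplies a specific idea for each.

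\medskip

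\textbf{The equivariant lattice.} You correctly flag that $(\Lambda/p^r,\phi)$ is \emph{not} a descent datum modulo $p^r$: $\phi$ only carries $\pi_0^*\Lambda$ into $p^{-c}\pi_1^*\Lambda$, and there is no ``descent up to bounded error'' theorem to invoke. The paper does not try to correct the descended objects level by level and organise them into an inverse system. Instead it replaces $\Lambda$ by a single $\phi$-stable sublattice in one stroke (Proposition~\ref{p:equivariance up to isogeny}): with $n$ chosen so that $p^n\phi$ is integral, one sets
\[
\Lambda'/p^n\Lambda \;:=\; \Ker\bigl(\Lambda/p^n\Lambda \xrightarrow{\ \beta\ } \Phi(p^{-n}\Lambda/\Lambda)\bigr),\qquad \Phi:=(\pi_0)_*\pi_1^*,
\]
which is the largest submodule of $\Lambda$ with $\phi(\pi_0^*\Lambda')\subset\pi_1^*\Lambda$. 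The cocycle condition for $\phi$ translates into a comonad-type identity for $\beta$, and a short diagram chase shows that in fact $\phi(\pi_0^*\Lambda')\subset\pi_1^*\Lambda'$. Now $(\Lambda',\phi)$ is an honest integral descent datum, and ordinary faithfully flat descent modulo each $p^r$ (then inverse limit) finishes. This single construction is what replaces your unspecified ``control these bounded denominators uniformly in $r$''.

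\medskip

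\textbf{Descent of projectivity.} Your sentence ``local projectivity of a finitely generated module is a flat-local property'' is precisely what you cannot use here: the footnote to the proposition warns that $H^0(\sZ,\cO)\otimes\BQ\to H^0(\sY,\cO)\otimes\BQ$ need not be flat, so ``flat-local detection of projectivity'' over $R\otimes\BQ$ has no hypothesis to stand on. The paper handles this by a Fitting-ideal argument (Lemma~\ref{l:open-closed subschemes} and Corollary~\ref{c:open-closed subschemes}): once the descended $M\in\QCoh^\flat_{\fin}(\sZ)\otimes\BQ$ is presented as a cokernel $(\cO_\sZ\otimes\BQ)^l\to(\cO_\sZ\otimes\BQ)^m\to M\to 0$, projectivity of $f^*M$ forces each Fitting ideal of $M$ to become idempotent-generated after base change to $B\otimes\BQ$, and one shows directly (using faithful flatness modulo~$p^r$ and $p$-adic completeness, not flatness over $\BQ$) that such an idempotent already lies in $A\otimes\BQ$. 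This is Lemma~\ref{l:descent for Bun_Q}; your appeal to \propref{p:global projectivity} does not substitute for it.
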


\begin{proof}[Proof of full faithfulness]
If $\sY$ and $\sZ$ are affine then the sequence
\[
0\to H^0(\sZ ,\cO_{\sZ})\to H^0(\sY_0,\cO_{\sY_0})\rightrightarrows H^0(\sY_1,\cO_{\sY_1})
\]
is exact by usual flat descent. This implies that the functor \eqref{e:descent for Bun_Q} is fully faithful. Essential surjectivity will be proved in \S\ref{ss:descent for Bun}.
\end{proof}

Let us note that Proposition~\ref{p:descent for Bun_Q} was substantially generalized by A.~Mathew \cite{Akh}.

\subsubsection{Equivariant objects of $\Bun_{\BQ}(\sY )$}
Suppose we have a groupoid $\Gamma\rightrightarrows\sY$ in $\FSch$ such that the two morphisms $\Gamma\to\sY$ are flat and quasi-compact.
Let $\sY_\bullet$ be its nerve (this is a simplicial formal scheme with $\sY_0=\sY$ and $\sY_1=\Gamma$). Set
\[
\Bun_\BQ (\sY/\Gamma ):=\Tot (\Bun_\BQ (\sY_\bullet )).
\]
This notation is legitimate because by Proposition~\ref{p:descent for Bun_Q}, the category $\Tot (\Bun_\BQ (\sY_\bullet ))$ depends only on the quotient stack $\sY/\Gamma$: indeed, 
\[
\Tot (\Bun_\BQ (\sY_\bullet ))=\underset{\sZ}{\underset{\longleftarrow}\lim}\,  \Bun_\BQ (\sZ ),
\]
where $\sZ$ runs through the category of objects of $\FSch$ equipped with a morphism to~$\sY/\Gamma$.
Objects of $\Bun_\BQ (\sY/\Gamma )$ are called \emph{$\Gamma$-equivariant objects of $\Bun_{\BQ}(\sY )$.} 

Now let $\sY=W(X_{\perf})$, $\Gamma =\sG$. Let $\sY_\bullet$ be the nerve of $\sG$. By Corollary~\ref{c:local freeness}, the essential image of the functor
\begin{equation}   \label{e:isoc to Tot}
\Isoc (X)=\Tot(\QCoh_{\fin}^\flat  (\sY_\bullet ))\otimes\BQ\to\Tot(\QCoh_{\fin}^\flat  (\sY_\bullet )\otimes\BQ )
\end{equation}
is contained in $\Tot(\Bun_{\BQ}(\sY_\bullet ) )=\Bun_{\BQ}(W(X_{\perf})/\sG )$. So we get a tensor functor 
\begin{equation}   \label{e:isoc to BunQ}
\Isoc (X)\to\Bun_{\BQ}(W(X_{\perf})/\sG ). 
\end{equation}

\begin{thm} \label{t:isocrystals via stacks}
The functor \eqref{e:isoc to BunQ} is an equivalence.
\end{thm}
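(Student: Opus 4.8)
The plan is to prove full faithfulness and essential surjectivity after reducing to a Noetherian local model. Both $\Isoc(X)$ and $\Bun_\BQ(W(X_{\perf})/\sG)$ are Zariski sheaves on $X$ and the functor \eqref{e:isoc to BunQ} is a morphism of such sheaves, so the assertion is local on $X$ and I may assume $X=\Spec B$ with $B$ carrying a finite $p$-basis. Fixing a lift $\sX=\Spf\tilde B$ as in \S\ref{sss:sXbullet}, Remark~\ref{r:iso of stacks} gives $W(X_{\perf})/\sG\iso\sX/\sR$, so that $\Isoc(X)=\QCoh_{\fin}^\flat(\sX/\sR)\otimes\BQ$ and $\Bun_\BQ(W(X_{\perf})/\sG)=\Tot(\Bun_\BQ(\sX_\bullet))$, where now $\sX=\Spf\tilde B$ is a $\BZ_p$-flat \emph{Noetherian} affine formal scheme and $\sX_\bullet$ is the nerve of $\sR$. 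Under this dictionary an object of $\QCoh_{\fin}^\flat(\sX/\sR)$ is a finitely generated $p$-torsion-free $\tilde B$-module equipped with a stratification; by Lemma~\ref{l:CohCrys}(i) these are exactly the coherent crystals on $X$.

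For full faithfulness, observe that on both sides morphisms are computed as the equalizer of the $\Hom$-groups over the first two terms $\sX_0,\sX_1$ of the nerve: on the source these are $\Hom_{\QCoh_{\fin}^\flat(\sX_n)}(M_n,N_n)\otimes\BQ$, on the target $\Hom_{\Bun_\BQ(\sX_n)}((M_n)_\BQ,(N_n)_\BQ)$. Since each $M_n$ is finitely presented over $\cO_{\sX_n}$ (being the pullback of the coherent crystal $M$, whose value over $\tilde B$ is finitely presented) and $\BZ_p\to\BQ_p$ is flat, one has $\Hom(M_n,N_n)\otimes\BQ=\Hom((M_n)_\BQ,(N_n)_\BQ)$ at each level; as $\otimes\BQ$ is exact it commutes with the equalizer, and full faithfulness follows. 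This is the same mechanism as the full-faithfulness half of Proposition~\ref{p:descent for Bun_Q}.

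The substance is essential surjectivity. Let $V\in\Bun_\BQ(\sX/\sR)$, with underlying module $V_0:=V|_\sX$ a finitely generated projective $(\tilde B\otimes\BQ)$-module; since $\sX$ is affine, Proposition~\ref{p:global projectivity} realizes $V_0$ as a direct summand of $(\tilde B\otimes\BQ)^n$. Set $M_0:=V_0\cap\tilde B^n$, a finitely generated $p$-torsion-free $\tilde B$-module with $M_0\otimes\BQ=V_0$; it is an integral lattice in $V_0$ but need not be stratification-stable. The $\sR$-equivariance is an isomorphism $\phi:\partial_0^*V_0\iso\partial_1^*V_0$ over $\sX_1\otimes\BQ$ restricting to the identity on the diagonal $\sX\subset\sX_1$. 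By Proposition~\ref{p:flatness by BM} the ring $H^0(\sX_1,\cO)$ is topologically free over $\tilde B$ on the divided-power monomials $\xi^{[l]}$ in $\xi_j:=i_0(\tilde x_j)-i_1(\tilde x_j)$, and the Taylor expansion $\phi(\partial_0^*m)=\sum_l\theta^{(l)}(m)\,\xi^{[l]}$ defines additive operators $\theta^{(l)}:V_0\to V_0$ satisfying the usual composition relations. Because $M_0$ is finitely generated there is $c$ with $\phi(\partial_0^*M_0)\subset p^{-c}\,\partial_1^*M_0$; comparing coefficients against the integral basis $\{\xi^{[l]}\}$ yields the \emph{uniform} bound $\theta^{(l)}(M_0)\subset p^{-c}M_0$ for all $l$. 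Hence $M:=\sum_l\theta^{(l)}(M_0)$ lies in $p^{-c}M_0$, is finitely generated (Noetherianity of $\tilde B$), is $p$-torsion-free, satisfies $M\otimes\BQ=V_0$, and—by the composition relations among the $\theta^{(l)}$—is stable under all $\theta^{(l)}$. Thus $M$ is a coherent crystal on $X$ whose stratification is $\phi$, so $M\otimes\BQ\cong V$. Having produced a preimage locally, the full faithfulness already proven glues these uniquely over a cover of $X$, and coherent crystals form a Zariski sheaf; this gives essential surjectivity in general.

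I expect the main obstacle to be exactly the construction of the finitely generated, stratification-stable lattice $M$: a naive saturation of $M_0$ under the divided-power operators $\theta^{(l)}$ need not be finitely generated. What rescues the argument is the combination of (a) the Noetherianity of the \emph{local} model $\tilde B$ supplied by Remark~\ref{r:iso of stacks} together with Proposition~\ref{p:noetherian morally smooth}, (b) the global projectivity of Proposition~\ref{p:global projectivity}, which furnishes the initial finitely generated lattice $M_0$, and (c) the explicit integral divided-power basis of $H^0(\sX_1,\cO)$ from Proposition~\ref{p:flatness by BM}, which converts the boundedness of the single isomorphism $\phi$ into a uniform denominator bound on all the $\theta^{(l)}$. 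Once this uniform bound is in hand, $M\subset p^{-c}M_0$ is automatically coherent and the construction closes up.
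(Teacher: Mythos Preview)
Your argument is correct and essentially complete, but it takes a genuinely different route from the paper's proof.

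The paper works entirely at the level of the groupoid $\sG$ on $W(X_{\perf})$, without ever passing to the Noetherian model $\sX/\sR$. The key input is Proposition~\ref{p:equivariance up to isogeny}, which shows for \emph{any} flat groupoid $\Gamma\rightrightarrows\sY$ in $\qFSch$ that
\[
\Tot(\QCoh^\flat(\sY_\bullet))\otimes\BQ\;\longrightarrow\;\Tot(\QCoh^\flat(\sY_\bullet)\otimes\BQ)
\]
is an equivalence. The construction there is coordinate-free: given $(M,\alpha)$ with $p^n\alpha$ integral, one takes $M'$ to be the largest submodule of $M$ with $\alpha(\pi_0^*M')\subset\pi_1^*M$, namely $M'/p^nM=\Ker\bigl(M/p^nM\xrightarrow{\beta}\Phi(p^{-n}M/M)\bigr)$ for the comonad $\Phi=(\pi_0)_*\pi_1^*$, and then verifies $\alpha(\pi_0^*M')\subset\pi_1^*M'$ using the coproduct $\mu:\Phi\to\Phi^2$. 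Combined with Proposition~\ref{p:descent of finiteness} (descent of finite generation up to isogeny), this makes \eqref{e:isoc to Tot} an equivalence, which is exactly Theorem~\ref{t:isocrystals via stacks}. No Noetherian hypothesis and no choice of $p$-basis enter.

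Your route instead exploits the Noetherian model $\tilde B$ and the explicit PD basis of Proposition~\ref{p:flatness by BM}: the single bound $\phi(\partial_0^*M_0)\subset p^{-c}\partial_1^*M_0$ becomes a uniform bound $\theta^{(l)}(M_0)\subset p^{-c}M_0$ by reading off coefficients, and Noetherianity of $\tilde B$ then forces $M=\sum_l\theta^{(l)}(M_0)\subset p^{-c}M_0$ to be finitely generated. This is closer in spirit to Ogus's original argument \cite[Thm.~1.9]{O} (which the paper cites as motivation), and is perfectly valid for the problem at hand. What you lose is generality: your lattice construction is specific to the crystalline groupoid with its explicit PD coordinates, whereas the paper's comonad argument applies to arbitrary flat groupoids over non-Noetherian bases and is what drives the general Banachian descent results of \S\ref{ss:Banachian games}. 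What you gain is concreteness: your stable lattice is produced by a visible saturation rather than an abstract kernel.
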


The proof will be given in \S\ref{sss:isocrystals via stacks}.

\subsection{Banachian games}   \label{ss:Banachian games}
We will use the notation and terminology of \S\ref{ss:qcoh notation}.
As explained in \S\ref{sss:bad&good news}, for a $p$-adic formal scheme $\sY$ it is harmless to identify an object $\cF\in\QCoh (\sY )$ with the corresponding sheaf $\sF_\infty$ on 
$\sY$, where $\cF_\infty$ is the projective limit of the sheaves $\cF_n=\cF/p^n\cF$. We will do it sometimes.

\subsubsection{One of the goals}
Let $\qFSch\subset\FSch$ be the full subcategory of quasi-compact quasi-separated formal schemes.
We will study the presheaves of categories
\[
\sY\mapsto \QCoh^\flat (\sY )\otimes\BQ \quad\mbox{ and } \quad \sY\mapsto \QCoh^\flat_{\fin} (\sY )\otimes\BQ , \quad \sY\in\qFSch\, .
\]
One of our goals is to prove that these presheaves are fpqc sheaves (see Corollary~\ref{c:descent for QCohQ} below).

\begin{rem}   \label{r:Banachian spaces}
$\QCoh^\flat (\Spf\BZ_p )\otimes\BQ$ is the category of \emph{Banachian spaces} over $\BQ_p\,$. By a Banachian space over a non-Archimedean field we mean a complete topological vector space such that the topology comes from a non-Archimedean norm. (Note that the topology determines  an equivalence class of norms but not a particular norm.)
\end{rem}

\begin{rem}   \label{r:Banachian modules}
If $\sY\in\FSch$ is affine then $\QCoh^\flat (\sY )\otimes\BQ$ is the category of Banachian modules over the Banach $\BQ_p$-algebra $H^0(\sY,\cO_{\sY})\otimes\BQ$. For any
$\sY\in\qFSch\,$, Corollary~\ref{c:descent for QCohQ} proved below allows one to identify $\QCoh^\flat (\sY )\otimes\BQ$ with the category of ``quasi-coherent sheaves of Banachian $(\cO_{\sY}\otimes\BQ)$-modules".
\end{rem}

\begin{prop}   \label{p:descent of finiteness}
Let $f:\sY'\to\sY$ be a faithfully flat morphism in  $\qFSch\,$. Let $M\in\QCoh^\flat (\sY )\otimes\BQ$. Then $M\in\QCoh^\flat_{\fin} (\sY )\otimes\BQ$ if and only if 
$f^*M\in\QCoh^\flat_{\fin} (\sY' )\otimes\BQ$.
\end{prop}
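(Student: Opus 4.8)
The plan is to prove the non-trivial implication, that finiteness \emph{descends}, the reverse being clear since the pullback of a finitely generated flat object is again finitely generated. Throughout I fix a $\BZ_p$-flat model $\cF\in\QCoh^\flat(\sY)$ with $M=\cF\otimes\BQ$. The first move is to replace the ``rational'' membership $M\in\QCoh^\flat_{\fin}(\sY)\otimes\BQ$ by an integral, finite-level criterion: I claim it is equivalent to the existence of an integer $N$ and a finite-type $\cO_{\sY}$-submodule $\cG\subseteq\cF$ (equivalently, a morphism $\cO_{\sY}^{\,n}\to\cF$ whose cokernel is killed by $p^N$) with $p^N\cF\subseteq\cG$. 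One direction is immediate; for the other I would use that any two models of $M$ are commensurable (related by an isogeny, so $p^{a}\cF\subseteq\cG'\subseteq p^{-a}\cF$ for a finite-type model $\cG'$), and then saturate $\cG$ inside $\cF$ so that the quotient becomes $p$-torsion-free. The point of this reformulation is that the condition ``$p^N\Coker=0$'' is isogeny-invariant and is already visible modulo $p^{N+1}$.

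Next come the reductions. Since the criterion is local on $\sY$ — here I would glue using the standard fact that a quasi-coherent sheaf on a quasi-compact quasi-separated scheme is the filtered union of its finite-type quasi-coherent subsheaves, together with finiteness of the cover — I may assume $\sY=\Spf R$ is affine; and since $f$ is quasi-compact I may replace $\sY'$ by the disjoint union of a finite affine cover, so that $\sY'=\Spf R'$ is affine and faithfully flat over $\sY$. The hypothesis that $f^{*}M$ is finitely generated then furnishes, after clearing denominators, a morphism $\psi\colon\cO_{\sY'}^{\,m}\to f^{*}\cF$ with $p^{N}\Coker(\psi)=0$ for some $N$; here I use that $f^{*}\cF$ is itself a model of $f^{*}M$, so the finite-type model guaranteed by the hypothesis is isogenous to it.

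The crux is the descent of this bounded-cokernel datum, and I would carry it out modulo a single power of $p$. Reducing everything modulo $p^{N+1}$ lands on the honest quasi-compact quasi-separated schemes $\sY_{N+1}$ and $\sY'_{N+1}$, and it is exactly here that the flatness hypothesis is available: by the very definition of flatness for $p$-adic formal schemes (the footnote to Proposition~\ref{p:descent for Bun_Q}), $f_{N+1}\colon\sY'_{N+1}\to\sY_{N+1}$ is faithfully flat, even though $R\to R'$ need not be. Writing $\cF_{N+1}=\cF/p^{N+1}\cF$ as the filtered union of its finite-type quasi-coherent subsheaves $\cE_i$ and pulling back along the exact functor $f_{N+1}^{*}$, the finite-type subsheaf $\operatorname{Im}(\psi\bmod p^{N+1})$ lands in $f_{N+1}^{*}\cE_{i_0}$ for some $i_0$; hence $p^{N}f_{N+1}^{*}\cF_{N+1}\subseteq f_{N+1}^{*}\cE_{i_0}$, and faithfulness of $f_{N+1}^{*}$ forces $p^{N}\cF_{N+1}\subseteq\cE_{i_0}$ on $\sY_{N+1}$. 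This is nothing but classical faithfully flat descent of finite generation, applied at the one level where flatness genuinely holds.

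Finally I would reassemble. From the finite-type $\cE_{i_0}$ with $p^{N}\cF_{N+1}\subseteq\cE_{i_0}\subseteq\cF_{N+1}$ one lifts generators to $\cF$ and, using that $\cF$ is $p$-adically complete and separated, a successive-approximation (topological Nakayama) argument upgrades the relation $p^{N}\cF\subseteq\operatorname{Im}(\phi)+p\,(p^{N}\cF)$ to $p^{N}\cF\subseteq\operatorname{Im}(\phi)$ for an honest $\phi\colon\cO_{\sY}^{\,n}\to\cF$; saturating then produces the finite-type flat model demanded by the criterion of the first paragraph, so $M\in\QCoh^\flat_{\fin}(\sY)\otimes\BQ$. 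The main obstacle is precisely the mismatch flagged in that footnote: flatness is present only modulo powers of $p$, while the assertion is rational. The bounded-cokernel reformulation is what reconciles the two — it is isogeny-invariant, hence insensitive to the choice of model, and detectable at the single finite level $p^{N+1}$, where ordinary faithfully flat descent applies; the remaining work is the $p$-adic bookkeeping needed to pass from a one-level approximation back to an integral model over the possibly non-Noetherian $\sY$.
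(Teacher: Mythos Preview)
Your proof is correct and follows essentially the same route as the paper's: both pass to the single finite level $\sY\otimes\BZ/p^{N+1}\BZ$, write $\cF/p^{N+1}\cF$ as the filtered union of its finite-type quasi-coherent subsheaves (using that $\sY_{N+1}$ is qcqs), descend the containment $p^{N}\cF_{N+1}\subset\cE$ by faithful flatness at that level, and then produce a finitely generated $\BZ_p$-flat model isogenous to~$\cF$. The paper's write-up is marginally leaner in that it works globally without reducing to the affine case and, for the lift-back, simply takes the preimage in $\cF$ of the finite-type submodule $L\subset\cF/p^{N+1}\cF$ instead of invoking topological Nakayama; note also that your final ``saturation'' step is unnecessary, since any $\cO_\sY$-submodule of the $\BZ_p$-flat $\cF$ is automatically $\BZ_p$-flat.
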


\begin{proof}
The ``only if" statement is obvious. 

Say that $M_1,M_2\in\QCoh^\flat (\sY )$ are \emph{isogenous} if they are isomorphic in 
$\QCoh^\flat (\sY )\otimes\BQ$. To prove the  ``if" statement, we have to show that if $M\in\QCoh^\flat (\sY )$ and $f^*M$ is isogenous to some $N'\in\QCoh^\flat_{\fin} (\sY' )$ then $M$ is isogenous to some $N\in\QCoh^\flat_{\fin} (\sY )$. We have 
$f^*M\supset N'\supset p^nf^*M$ for some $n$. The submodule $N'/p^{n+1}f^*M\subset f^*(M/p^{n+1}M)$ is finitely generated. Since $\sY\otimes (\BZ/p^{n+1}\BZ)$ is a quasi-compact quasi-separated scheme, $M/p^{n+1}M$ equals the sum of its finitely generated submodules. So there exists a finitely generated submodule $L\subset M/p^{n+1}M$ such that $f^*L\supset N'/p^{n+1}f^*M$. Since $N'\supset p^nf^*M$, we have $f^*L\supset f^*(p^nM/p^{n+1}M)$, so $L\supset p^nM/p^{n+1}M$ by fully faithfulness. Now let $N\subset M$ be the preimage of $L$. Then $N\supset p^nM$, and $N/p^{n+1}M=L$ is finitely generated. So $N$ is finitely generated. It is clear that $N$ is isogeneous to $M$.
\end{proof}

\begin{prop}   \label{p:equivariance up to isogeny}
Let $\Gamma\rightrightarrows\sY$ be a flat groupoid in $\qFSch$. Let $\sY_\bullet$ be its nerve. Then the functor
\begin{equation}  
\Tot ( \QCoh^\flat (\sY_\bullet ))\otimes\BQ\to\Tot ( \QCoh^\flat (\sY_\bullet )\otimes\BQ )
\end{equation}
is an equivalence.
\end{prop}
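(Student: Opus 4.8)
The plan is to prove the functor is an equivalence by treating full faithfulness and essential surjectivity separately. Full faithfulness will be formal, coming from exactness of localization at $p$; essential surjectivity is the real content, as it amounts to rigidifying a descent datum that a priori exists only up to isogeny, and this is where the quasi-compactness of $\Gamma$ is indispensable.

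\emph{Full faithfulness.} For objects $M,N$ of $\Tot(\QCoh^\flat(\sY_\bullet))$, the group $\Hom(M,N)$ is by definition the equalizer of the two coface maps $\Hom_{\QCoh^\flat(\sY_0)}(M,N)\rightrightarrows\Hom_{\QCoh^\flat(\sY_1)}(p_0^*M,p_1^*N)$, i.e. a kernel. The $\Hom$-groups in each $\QCoh^\flat(\sY_n)$ are $p$-torsion-free, one has $\Hom_{\QCoh^\flat(\sY_n)\otimes\BQ}=\Hom_{\QCoh^\flat(\sY_n)}\otimes\BQ$ by the definition of the isogeny category, and $-\otimes\BQ$ is exact and so preserves this kernel. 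Consequently both $\Hom_{\Tot(\QCoh^\flat(\sY_\bullet))\otimes\BQ}(M,N)$ and $\Hom_{\Tot(\QCoh^\flat(\sY_\bullet)\otimes\BQ)}(M,N)$ agree with $\Hom_{\Tot(\QCoh^\flat(\sY_\bullet))}(M,N)\otimes\BQ$, so the functor is fully faithful.

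\emph{Essential surjectivity.} An object of the target is a pair $(M,\phi)$, where $M\in\QCoh^\flat(\sY)$ and $\phi$ is an isomorphism $p_0^*M\iso p_1^*M$ in $\QCoh^\flat(\Gamma)\otimes\BQ$ satisfying the cocycle condition over $\sY_2$. Since $\QCoh^\flat(\Gamma)\otimes\BQ$ is an isogeny category, $\phi$ is the same as an honest isomorphism $\phi\colon p_0^*(M\otimes\BQ)\iso p_1^*(M\otimes\BQ)$ of the generic-fibre sheaves, satisfying the honest cocycle identity over $\sY_2$; the task is to find a lattice $N$ inside $M\otimes\BQ$, isogenous to $M$, with $\phi(p_0^*N)=p_1^*N$. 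Because $\phi$ is invertible and $\Gamma$ is quasi-compact, one integer $c\ge0$ works uniformly over all of $\Gamma$ in the bound $p^c\,p_1^*M\subseteq\phi(p_0^*M)\subseteq p^{-c}\,p_1^*M$, so every ``translate'' of $M$ is commensurable with $M$ within a fixed band. The heart of the argument is then to let $N\subseteq M\otimes\BQ$ be the smallest $\cO_\sY$-submodule containing $M$ and closed under both $\phi$ and $\phi^{-1}$, in the sense that $\phi(p_0^*N)\subseteq p_1^*N$ and $\phi^{-1}(p_1^*N)\subseteq p_0^*N$. Here the cocycle identity is what keeps the construction bounded: composing two translates equals the translate by a single element of $\Gamma$, so the uniform band confines the entire saturation to $M\subseteq N\subseteq p^{-c}M$. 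Thus $N$ is isogenous to $M$; being a submodule of the $\BQ_p$-module $M\otimes\BQ$ it is $p$-torsion-free, hence $N\in\QCoh^\flat(\sY)$; and closure under both $\phi$ and $\phi^{-1}$ upgrades the inclusion to an equality $\phi(p_0^*N)=p_1^*N$, so $\phi$ restricts to an honest isomorphism $\phi_N\colon p_0^*N\iso p_1^*N$. The cocycle identity for $\phi_N$ over $\sY_2$ then holds on the nose, since its two sides are honest morphisms of $\BZ_p$-flat sheaves agreeing after $\otimes\BQ$, hence equal by $p$-torsion-freeness. Therefore $(N,\phi_N)\in\Tot(\QCoh^\flat(\sY_\bullet))$ maps to an object isomorphic, via the isogeny $M\hookrightarrow N$, to $(M,\phi)$.

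I expect the main obstacle to be the rigorous construction of the invariant lattice $N$: one must verify that this bounded saturation under the groupoid action is a genuine quasi-coherent subsheaf which glues over $\sY$ and is \emph{exactly} $\Gamma$-invariant, for a formal groupoid that is only quasi-compact and quasi-separated rather than of finite type. It is precisely here that quasi-compactness of $\Gamma$ (which makes the band $p^{-c}M/p^cM$ uniform) and quasi-separatedness (which makes the construction local on $\sY$) enter; without quasi-compactness the statement is simply false, as an infinite constant group acting by an unbounded isogeny already shows.
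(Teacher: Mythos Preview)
Your strategy is the natural ``dual'' of the paper's: you enlarge $M$ to the smallest $\phi$-invariant lattice $N\supset M$, whereas the paper shrinks $M$ to the largest sublattice $M'\subset M$ with $\alpha(\pi_0^*M')\subset\pi_1^*M$, and then shows (using the cocycle identity, via the comonad $\Phi=(\pi_0)_*\pi_1^*$) that this $M'$ is automatically invariant. Both approaches rest on the same one-step-suffices phenomenon coming from the cocycle relation, so your intuition is sound.

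The difficulty you flag in your last paragraph is, however, a real gap rather than a routine detail. To build the \emph{smallest} $N$ with $\pi_1^*N\supset\phi(\pi_0^*M)$ you would need either a left adjoint $(\pi_1)_!$ to $\pi_1^*$ (which does not exist for a merely flat qcqs morphism) or that $\pi_1^*$ commute with the relevant intersection of subsheaves (it need not, since flat pullback is only finitely continuous). So ``smallest invariant lattice containing $M$'' is not manifestly a quasi-coherent subsheaf. The paper's choice avoids this entirely: ``largest $M'\subset M$ with $\alpha(\pi_0^*M')\subset\pi_1^*M$'' is literally the preimage of a kernel, namely $M'/p^nM=\Ker\bigl(M/p^nM\xrightarrow{\beta}\Phi(p^{-n}M/M)\bigr)$, which is quasi-coherent on the nose; and the qcqs hypothesis enters precisely so that $(\pi_0)_*$ preserves quasi-coherence, not (as you suggest) to make the exponent $c$ uniform---that uniformity is already built into the definition of the isogeny category. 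If you want to salvage your direction with minimal change, note that you only need \emph{some} invariant lattice commensurable with $M$, not the smallest one; the paper's $M'$ (or its rescaling $p^{-n}M'$, which sits between $M$ and $p^{-n}M$) already provides that.
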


The proof given below is similar to that of \cite[Thm.~1.9]{O}.

\begin{proof}
Full faithfulness is clear. Let us prove essential surjectivity.

Let $\pi_0,\pi_1:\sY_1\to\sY_0$ be the face maps and $e:\sY_0\to\sY_1$ the degeneration map (i.e., the unit of the groupoid). An object of $\Tot ( \QCoh^\flat (\sY_\bullet )\otimes\BQ )$ is given by a pair $(M,\alpha )$,  where $M\in\QCoh^\flat (\sY_0)$ and $\alpha\in\Hom (\pi_0^*M,\pi_1^*M)\otimes\BQ$ satisfies the cocycle condition and the condition $e^*(\alpha)=\id_M$. Let $n\ge 0$ be such that  $p^n\alpha\in\Hom (\pi_0^*M,\pi_1^*M)$. We will construct an $\cO_{\sY}$-submodule $M'\subset M$ containing $p^nM$ such that 
$M'\in\QCoh (\sY )$ and the morphism $\alpha :\pi_0^*M\to p^{-n}\pi_1^*M$ maps $\pi_0^*M'$ to $\pi_1^*M'$. This is clearly enough.

Let us introduce some notation. For $i\le j$ we have the morphism $$\alpha:\pi_0^*(p^iM/p^jM)\to\pi_1^*(p^{i-n}M/p^{j-n}M).$$
It induces a morphism
\begin{equation}  \label{e:beta maps}
\beta :p^iM/p^jM\to\Phi (p^{i-n}M/p^{j-n}M), \quad \Phi:=(\pi_0)_*\pi_1^*.
\end{equation}

Now let $M'\subset M$ be the submodule containing $p^nM$ such that
\begin{equation}  \label{e:def of M'}
M'/p^nM=\Ker (M/p^nM\overset{\beta}\longrightarrow\Phi(p^{-n}M/M)).
\end{equation}
Then $\alpha (\pi_0^*M')\subset \pi_1^*M$, and $M'$ is the biggest submodule of $M$ with this property. Our goal is to prove that
\begin{equation}  \label{e:desired inclusion}
\alpha (\pi_0^*M')\subset \pi_1^*M'.
\end{equation}

Let us make some general remarks. Since $\sY_\bullet$ is the nerve of a flat groupoid, $\Phi$ is a left exact comonad acting on the category 
$\{ N\in\QCoh (\sY )\,|\, p^rN=0 \mbox{ for some }r\}$.
In particular, we have the coproduct $\mu :\Phi\to\Phi^2$. The morphisms \eqref{e:beta maps} satisfy the following identity\footnote{If $n=0$ this identity means that $p^iM/p^jM$ is a comodule over $\Phi$.}, which follows from the cocycle property of $\alpha$: the composite maps
\[ 
p^iM/p^jM\overset{\beta}\longrightarrow \Phi (p^{i-n}M/p^{j-n}M)\overset{\Phi (\beta )}\longrightarrow \Phi^2 (p^{i-2n}M/p^{j-2n}M) ,
\]
\[   
p^iM/p^jM\overset{\beta}\longrightarrow \Phi (p^{i-n}M/p^{j-n}M)\to\Phi (p^{i-2n}M/p^{j-2n}M)\overset{\mu}\longrightarrow\Phi^2 (p^{i-2n}M/p^{j-2n}M)
\]
are equal to each other. In particular, the composite maps
\begin{equation}   \label{e:2action comp1}
M/p^{2n}M\overset{\beta}\longrightarrow \Phi (p^{-n}M/p^nM)\overset{\Phi (\beta )}\longrightarrow \Phi^2 (p^{-2n}M/M) ,
\end{equation}
\begin{equation}     \label{e:2action comp2}
M/p^{2n}M\overset{\beta}\longrightarrow\Phi (p^{-n}M/p^nM)\to\Phi (p^{-2n}M/M) \overset{\mu}\longrightarrow \Phi^2 (p^{-2n}M/M) 
\end{equation}
are equal to each other. 

Let us now prove \eqref{e:desired inclusion}. By \eqref{e:def of M'}, $M'/p^{2n}M$ is equal to the preimage of $\Phi (M/p^nM)$ under the map
\begin{equation}   \label{e:beta_2n}
M/p^{2n}M\overset{\beta}\longrightarrow \Phi (p^{-n}M/p^nM).
\end{equation}
 So the first two arrows of \eqref{e:2action comp2} kill $M'/p^{2n}M$. Therefore the composite map 
\eqref{e:2action comp1} kills $M'/p^{2n}M$. So the morphism \eqref{e:beta_2n} maps $M'/p^{2n}M$ to
\[
\Ker (\Phi (M/p^nM)\overset{\Phi (\beta )}
\longrightarrow \Phi^2 (p^{-n}M/M))=\Phi (M'/p^nM).
\]
This means that the composite map $$\pi_0^*(M'/p^{2n}M)\overset{\alpha}\longrightarrow\pi_1^*(p^{-n}M/p^nM)\to\pi_1^*(p^{-n}M/M')$$ is zero, which 
is equivalent to \eqref{e:desired inclusion}.
\end{proof}

\begin{cor}     \label{c:descent for QCohQ}
Let $f:\sY\to\sZ$ be a faithfully flat morphism in $\qFSch\,$.
Let $\sY_n$ be the fiber product (over $\sZ$) of $n+1$ copies of $\sY$. Then the functors
\begin{equation}  \label{e:descent for QCohQ}
\QCoh^\flat  (\sZ )\otimes\BQ\to\Tot ( \QCoh^\flat (\sY_\bullet )\otimes\BQ ),
\end{equation}
\begin{equation}  \label{e:descent for finQCohQ}
\QCoh^\flat_{\fin}  (\sZ )\otimes\BQ\to\Tot ( \QCoh^\flat_{\fin} (\sY_\bullet )\otimes\BQ ),
\end{equation}
are equivalences.
\end{cor}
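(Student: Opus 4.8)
The plan is to obtain both equivalences by assembling the two preceding propositions with ordinary (integral) faithfully flat descent. First I would observe that the simplicial formal scheme $\sY_\bullet$ is precisely the nerve of the \v{C}ech groupoid $\Gamma:=\sY\times_\sZ\sY\rightrightarrows\sY$ (the banal groupoid of the cover $f$): its two structure maps are base changes of $f$, hence flat and quasi-compact, and $\Gamma$ lies in $\qFSch$. Thus \propref{p:equivariance up to isogeny} applies to $\Gamma$ and already supplies an equivalence
\[
\Tot(\QCoh^\flat(\sY_\bullet))\otimes\BQ\iso\Tot(\QCoh^\flat(\sY_\bullet)\otimes\BQ).
\]
It therefore suffices, for \eqref{e:descent for QCohQ}, to produce an equivalence $\QCoh^\flat(\sZ)\otimes\BQ\iso\Tot(\QCoh^\flat(\sY_\bullet))\otimes\BQ$ and to note that the composite of the two is, by construction, the functor \eqref{e:descent for QCohQ}.

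For that first factor I would establish the integral statement $\QCoh^\flat(\sZ)\iso\Tot(\QCoh^\flat(\sY_\bullet))$ and then apply the $2$-functor $(-)\otimes\BQ$. The integral statement reduces to classical descent level by level: using $\QCoh^\flat(\sW)=\underset{r}{\underset{\longleftarrow}\lim}\,\QCoh^\flat(\sW\otimes\BZ/p^r\BZ)$ for every $\sW\in\qFSch$ (see \secref{sss:qcoh notation}) and interchanging the two limits, one gets
\[
\Tot(\QCoh^\flat(\sY_\bullet))=\underset{r}{\underset{\longleftarrow}\lim}\,\Tot\bigl(\QCoh^\flat((\sY\otimes\BZ/p^r\BZ)_\bullet)\bigr),
\]
where $(\sY\otimes\BZ/p^r\BZ)_\bullet$ denotes the \v{C}ech nerve of $f\otimes\BZ/p^r\BZ$. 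For each $r$ the morphism $f\otimes\BZ/p^r\BZ$ is a faithfully flat quasi-compact morphism of qcqs schemes, so ordinary faithfully flat descent for quasi-coherent sheaves gives $\Tot(\QCoh((\sY\otimes\BZ/p^r\BZ)_\bullet))\iso\QCoh(\sZ\otimes\BZ/p^r\BZ)$; since $\BZ/p^r\BZ$-flatness descends along faithfully flat morphisms, this equivalence restricts to the full subcategories of flat objects. Passing to the limit over $r$ yields $\Tot(\QCoh^\flat(\sY_\bullet))\iso\QCoh^\flat(\sZ)$, and tensoring with $\BQ$ provides the desired first factor. Composing with \propref{p:equivariance up to isogeny} shows that \eqref{e:descent for QCohQ} is an equivalence.

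The finite version \eqref{e:descent for finQCohQ} then follows formally by cutting \eqref{e:descent for QCohQ} down to finitely generated objects. Under \eqref{e:descent for QCohQ} an object $M\in\QCoh^\flat(\sZ)\otimes\BQ$ corresponds to a descent datum whose underlying object on $\sY_0=\sY$ is $f^*M$. By \propref{p:descent of finiteness}, $M$ lies in $\QCoh^\flat_\fin(\sZ)\otimes\BQ$ if and only if $f^*M$ lies in $\QCoh^\flat_\fin(\sY)\otimes\BQ$, i.e.\ if and only if the associated descent datum lands in $\Tot(\QCoh^\flat_\fin(\sY_\bullet)\otimes\BQ)$. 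Hence \eqref{e:descent for QCohQ} identifies the full subcategory $\QCoh^\flat_\fin(\sZ)\otimes\BQ$ with $\Tot(\QCoh^\flat_\fin(\sY_\bullet)\otimes\BQ)$, which is exactly \eqref{e:descent for finQCohQ}.

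The genuine obstacle is not located in this assembly but has already been surmounted in \propref{p:equivariance up to isogeny}: the delicate point is precisely that inverting $p$ does \emph{not} a priori commute with the totalization $\Tot$, and bridging that gap is the whole content of that proposition. Within the present argument the only steps requiring care are the interchange of $\underset{r}{\underset{\longleftarrow}\lim}$ with $\Tot$ and the verification that the two projections $\Gamma\rightrightarrows\sY$ are flat and quasi-compact so that \propref{p:equivariance up to isogeny} genuinely applies; both are routine.
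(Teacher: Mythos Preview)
Your proposal is correct and follows essentially the same approach as the paper's own proof: the paper also reduces \eqref{e:descent for QCohQ} to ``usual flat descent'' combined with \propref{p:equivariance up to isogeny} applied to the \v{C}ech groupoid of $f$, and then deduces \eqref{e:descent for finQCohQ} from \propref{p:descent of finiteness}. Your version is simply more explicit about the integral step (spelling out the interchange of the limit over $r$ with $\Tot$ and the level-by-level descent), which the paper compresses into the single phrase ``usual flat descent.''
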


\begin{proof}
Full faithfulness is clear. To prove essential surjectivity of \eqref{e:descent for QCohQ}, combine usual flat descent with Proposition~\ref{p:equivariance up to isogeny} applied to the groupoid with nerve $\sY_\bullet\,$. Essential surjectivity of \eqref{e:descent for finQCohQ} follows from Proposition~\ref{p:descent of finiteness} and essential surjectivity of \eqref{e:descent for QCohQ}.
\end{proof}

\subsubsection{Proof of Theorem~\ref{t:isocrystals via stacks}} \label{sss:isocrystals via stacks}
By Corollary~\ref{c:descent for QCohQ}, the functor \eqref{e:isoc to Tot} is an equivalence. Theorem~\ref{t:isocrystals via stacks} follows. \qed

\begin{prop}   \label{p:Karoubian}
The categories $\QCoh^\flat  (\sY )\otimes\BQ$ and $\QCoh^\flat_{\fin}  (\sY )\otimes\BQ$ are Karoubian for every $\sY\in\qFSch\,$.
\end{prop}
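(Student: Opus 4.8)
The plan is to check the defining property directly: every idempotent endomorphism splits. Since both categories are additive, it suffices to fix $M\in\QCoh^\flat(\sY)$ and an idempotent $e\in\End(M)\otimes\BQ$, $e^2=e$, and to produce a splitting. First I would record that $\End(M)$ is a $\BZ_p$-algebra and that, because $M$ is $\BZ_p$-flat, the associated sheaf $M=M_\infty$ of \S\ref{sss:bad&good news} is $p$-torsion-free; hence $\End(M)$ is $p$-torsion-free and embeds into $\End(M)\otimes\BQ=\End(M)[1/p]$. Writing $e=\phi/p^n$ with $\phi\in\End(M)$, the relation $e^2=e$ becomes the honest identity $\phi^2=p^n\phi$ in $\End(M)$.

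Then I would split $e$ by an explicit kernel. Set $P:=\Ker(p^n-\phi)\subseteq M$, the kernel taken sheaf-theoretically under the identification of \S\ref{sss:bad&good news}. Granting for the moment that $P\in\QCoh^\flat(\sY)$, the rest is formal: for any local section $x$ of $M$ one has $\phi(\phi(x))=\phi^2(x)=p^n\phi(x)$, so $\phi(x)\in P$ and therefore $e$ maps $M\otimes\BQ$ into $P\otimes\BQ$; and for $y\in P$ one has $\phi(y)=p^ny$, so $e(y)=y$, i.e.\ $e$ restricts to the identity on $P\otimes\BQ$. Taking $i:P\otimes\BQ\hookrightarrow M\otimes\BQ$ the inclusion and $r:M\otimes\BQ\to P\otimes\BQ$ the corestriction of $e$, we get $ir=e$ and $ri=\id$, so $e$ splits with image $P\otimes\BQ$; this shows $\QCoh^\flat(\sY)\otimes\BQ$ is Karoubian.

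For $\QCoh^\flat_{\fin}(\sY)\otimes\BQ$, which is a full subcategory of the Karoubian category just treated, it is enough to check it is stable under direct summands, i.e.\ that $P\otimes\BQ$ is finitely generated once $M$ is. Here I would invoke \propref{p:descent of finiteness}: finiteness may be tested after a faithfully flat quasi-compact base change, so I pass to an affine cover $f:\sY'=\Spf(\prod_iA_i)\to\sY$. Over the affine $\sY'$ one has, by \remref{r:Banachian modules}, the category of Banachian modules over $R=H^0(\sY',\cO_{\sY'})\otimes\BQ$, and $P\otimes\BQ$ is a direct summand of the finitely generated $R$-module $M\otimes\BQ$, hence finitely generated over $R$; lifting finitely many generators into $P$ and $p$-adically completing the submodule they span exhibits $f^*(P\otimes\BQ)$ as an object of $\QCoh^\flat_{\fin}(\sY')\otimes\BQ$. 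Then \propref{p:descent of finiteness} gives $P\otimes\BQ\in\QCoh^\flat_{\fin}(\sY)\otimes\BQ$, so the finite category is Karoubian as well.

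The one point that is not purely formal, and which I expect to be the main obstacle, is the verification that the sheaf kernel $P=\Ker(p^n-\phi)$ really is an object of $\QCoh^\flat(\sY)$: recall that $\QCoh(\sY)$ is not abelian, so kernels need not remain in the category. Here I would use that $p^n-\phi$ is $\cO_\sY$-linear, hence $p$-adically continuous, so that $P$ is a closed submodule of the $p$-adically complete and separated, $p$-torsion-free sheaf $M$; thus $P$ is itself $p$-adically complete and $p$-torsion-free, whence $\BZ_p$-flat, and one checks affine-locally that its reductions are quasi-coherent and form an inverse system with $P=\varprojlim_k P/p^kP$ and $P/p^kP$ flat over $\BZ/p^k\BZ$. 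Once this identification is in place the whole argument goes through; everything else is manipulation of the single relation $\phi^2=p^n\phi$.
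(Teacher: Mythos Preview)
Your route is genuinely different from the paper's, and the place you flag as ``the main obstacle'' is exactly where the write-up is too thin.

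The paper does not take a kernel at all. With your notation $\phi=p^n e\in\End(M)$, it sets
\[
M':=p^nM+\phi(M)\subset M,
\]
notes that $p^nM\subset M'\subset M$ (so $M'\cong M$ in $\QCoh^\flat(\sY)\otimes\BQ$) and that $e(M')\subset M'$; hence $e$ is an \emph{integral} idempotent of $M'$, and $M'=e(M')\oplus(1-e)(M')$ splits levelwise in the abelian categories $\QCoh(\sY_k)$. This also handles $\QCoh^\flat_{\fin}$ in one line: if $m_1,\dots,m_s$ generate $M$, then $p^nm_i$ and $\phi(m_i)$ generate $M'$, and $e(M')$ is a quotient of $M'$.

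Your argument that $P=\Ker(p^n-\phi)$ lies in $\QCoh^\flat(\sY)$ needs more than ``$P$ is closed, $p$-complete and $p$-torsion-free, and one checks affine-locally that its reductions are quasi-coherent''. Quasi-coherence of $P/p^kP$ on the scheme $\sY_k$ is precisely the assertion that for affines $\Spf B\subset\Spf A$ one has $\Ker(\psi_B)/p^k=(\Ker(\psi_A)/p^k)\otimes_{A/p^k}B/p^k$, where $\psi=p^n-\phi$ acts on the $p$-completed module $M_B$; kernels do not commute with $p$-completion in general, so this is not automatic. It \emph{is} true here, but only because of the relation you have not yet used: from $\phi+\psi=p^n$ and $\psi\phi=0$ one gets $\phi(M)\subset P$ and $P\cap p^jM=p^jP$, and then for $j\ge n$ the inclusion $\Ker(\bar\psi_j)\subset (P+p^{\,j-n}M)/p^jM$ lets you push any element of $P_B/p^kP_B$ down into $(P_A/p^k)\otimes B/p^k$ by lifting to level $k+n$. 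None of this is in your sketch; without it the step is a gap, not a routine check.

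Your treatment of $\QCoh^\flat_{\fin}$ has a second soft spot. After passing to an affine you lift finitely many $R$-generators of $P\otimes\BQ$ to $v_1,\dots,v_m\in P$ and take the completed $A$-span $P'$. This gives $P'\otimes\BQ=P\otimes\BQ$ as $R$-modules, but to get an isomorphism in $\QCoh^\flat(\sY')\otimes\BQ$ you need a \emph{uniform} $j$ with $p^jP\subset P'$, which does not follow from what you wrote. The fix again uses the structure: since $\phi(M)$ is finitely generated (image of a finitely generated module) and $p^nP\subset\phi(M)\subset P$, one has $P\cong\phi(M)$ in $\QCoh^\flat_{\fin}(\sY')\otimes\BQ$ directly, with no need for \propref{p:descent of finiteness}. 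This is exactly the summand $e(M')=\phi(M)$ the paper produces, so once repaired your argument converges to the paper's.
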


\begin{proof}   
Let $M\in\QCoh^\flat (\sY )$, $r\ge 0$ an integer, $\pi\in p^{-r}\cdot\End M$, $\pi^2=\pi$. Set 
\[
M':=p^rM+(p^r\cdot\pi)(M)\subset M.
\]
Then $M'\in\QCoh^\flat (\sY )$, $p^rM\subset M'\subset M$. Moreover, $\pi (M')\subset M'$, so we have a direct sum decomposition
$$M'=\pi (M')\oplus (1-\pi)(M')$$
in $\QCoh^\flat (\sY )$. It gives the desired direct sum composition of $M$ in $\QCoh^\flat (\sY )\otimes\BQ$. 

If $M$ is in $\QCoh^\flat_{\fin}  (\sY )$ then so are $M'$ and $\pi (M')$.
\end{proof}

\subsection{Proof of Propositions \ref{p:global projectivity} and  \ref{p:descent for Bun_Q}}   \label{ss:descent for Bun}
\begin{lem}   \label{l:open-closed subschemes}
Let $\varphi :A\to B$ be a homomorphism of $\BZ_p$-flat $p$-adically complete rings. Assume that the homomorphism $A/p^rA\to B/p^rB$ induced by $\varphi$ is faithfully flat for all $r$ (or equivalently, for $r=1$). Let $I_A\subset A\otimes\BQ$ be an ideal, and let $I_B\subset B\otimes\BQ$ be the ideal generated by $I_A$. If $I_B$ is generated by an idempotent in $B\otimes\BQ$ then $I_A$ is generated by an idempotent in $A\otimes\BQ$.
 \end{lem}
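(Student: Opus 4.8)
The plan is to translate the condition ``generated by an idempotent'' into a projectivity statement and then descend it along $\varphi$. Recall that for a commutative ring $R$ and an ideal $\fa\subset R$, the ideal $\fa$ is generated by an idempotent if and only if the surjection $R\to R/\fa$ splits as a map of $R$-modules, i.e.\ if and only if $R/\fa$ is a projective $R$-module: a splitting exhibits $\fa$ as a direct summand of $R$, and the image of $1$ under the resulting projection $R\to\fa$ is the desired idempotent. Applying this to $R=A\otimes\BQ$ with $\fa=I_A$, and to $R=B\otimes\BQ$ with $\fa=I_B$, the lemma becomes the assertion that if $(B\otimes\BQ)/I_B$ is a projective $(B\otimes\BQ)$-module, then $(A\otimes\BQ)/I_A$ is a projective $(A\otimes\BQ)$-module. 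Note that $M:=(A\otimes\BQ)/I_A$ is cyclic, hence finitely generated, and that by right exactness $M\otimes_{A\otimes\BQ}(B\otimes\BQ)=(B\otimes\BQ)/I_B$, since $I_B$ is by hypothesis the ideal generated by $I_A$.

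The crux is to show that $\varphi\otimes\BQ\colon A\otimes\BQ\to B\otimes\BQ$ is faithfully flat. I would first prove that $\varphi\colon A\to B$ itself is faithfully flat. For flatness I would invoke the standard fact that a homomorphism of $p$-torsion-free, $p$-adically complete rings which is flat modulo $p$ is flat; our hypothesis supplies flatness of $A/p^rA\to B/p^rB$ for all $r$, in particular modulo $p$. For faithfulness it suffices to check that $\fm B\neq B$ for every maximal ideal $\fm\subset A$. Since $A$ is $p$-adically complete, $1+pA\subset A^\times$, so $p$ lies in the Jacobson radical of $A$ and hence $p\in\fm$; thus $\fm/pA$ is a prime of $A/pA$. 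As $A/pA\to B/pB$ is faithfully flat, $\Spec(B/pB)\to\Spec(A/pA)$ is surjective, so there is a prime $\fQ\subset B$ containing $p$ with $\fQ\cap A=\fm$; then $\fm B\subset\fQ\subsetneq B$. Hence $\varphi$ is faithfully flat, and faithful flatness is preserved under the base change along $A\to A\otimes\BQ$, so $A\otimes\BQ\to B\otimes\BQ$ is faithfully flat.

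With faithful flatness in hand the conclusion follows from faithfully flat descent for modules. The module $M=(A\otimes\BQ)/I_A$ satisfies $M\otimes_{A\otimes\BQ}(B\otimes\BQ)=(B\otimes\BQ)/I_B$, which is finitely generated projective (being a direct summand of $B\otimes\BQ$), in particular finitely presented and flat. Both finite presentation and flatness descend along the faithfully flat map $A\otimes\BQ\to B\otimes\BQ$, so $M$ is a finitely presented flat $(A\otimes\BQ)$-module, hence finitely generated projective. By the first paragraph this means that $I_A$ is a direct summand of $A\otimes\BQ$, i.e.\ it is generated by an idempotent.

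I expect the main obstacle to be the faithful-flatness step, and within it the passage from flatness modulo $p$ to honest flatness of $\varphi$; the descent results invoked at the end (descent of finite presentation and of flatness along faithfully flat maps, and the equality ``finitely presented $+$ flat $=$ finitely generated projective'') are standard and should present no difficulty.
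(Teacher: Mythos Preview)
Your argument is correct, but it follows a different route from the paper's. You reduce to faithfully flat descent of finite projectivity along $A\otimes\BQ\to B\otimes\BQ$, and for this you need $\varphi$ itself (hence its localization) to be faithfully flat. The fact you invoke---that a map of $p$-torsion-free, $p$-adically complete rings which is flat modulo $p$ is flat---is true (since $B$ is derived $p$-complete over $A$ and $B\otimes_A^{L}A/pA\simeq B/pB$ is flat over $A/pA$, one can apply the criterion that a derived $I$-complete module whose derived reduction is flat is flat), but it is less elementary than you suggest and sits outside classical faithfully flat descent. The paper deliberately sidesteps this point: note the footnote to Proposition~\ref{p:descent for Bun_Q}, where the author explicitly refrains from asserting that the map on global sections is flat. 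Instead, the paper argues directly: the idempotent $e\in B\otimes\BQ$ generating $I_B$ is shown to lie in $A\otimes\BQ$ via the equalizer description $A=\Ker(B\rightrightarrows B\hat\otimes_A B)$ (which only uses faithful flatness mod $p^r$), and then one checks by hand that $I_A=e\cdot(A\otimes\BQ)$, using faithful flatness of $A/p^{r+1}A\to B/p^{r+1}B$ to pull back the relation $p^r\in JB$ to $p^r\in J$. Your approach is cleaner conceptually and immediately gives the projectivity statement used in Corollary~\ref{c:open-closed subschemes}; the paper's approach is more self-contained and keeps the entire argument at the level of the reductions modulo~$p^r$.
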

 
 \begin{proof}
Let $e\in B\otimes\BQ$ be the idempotent that generates $I_B\,$. Then 
$$e\in\Ker (B\rightrightarrows B\hat\otimes_AB)\otimes\BQ=A\otimes\BQ.$$
Let us show that $I_A=e\cdot (A\otimes\BQ )$.

Since $A\subset B$ and $(1-e)\cdot I_B=0$, we have $(1-e)\cdot I_A=0$, so $I_A\subset e\cdot (A\otimes\BQ )$. It remains to show that
\begin{equation}   \label{e:unit_generation}
I_A+(1-e)\cdot (A\otimes\BQ)=A\otimes\BQ.
\end{equation}
Let $J:=A\cap (I_A+(1-e)\cdot (A\otimes\BQ))$. Then $J$ generates the unit ideal in $B\otimes\BQ$, so $p^r\in JB$ for some $r$. But the map
\[
A/(p^{r+1}A+J)\to B/(p^{r+1}B+J\cdot B)
\]
is injective by the faithful flatness assumption. So $p^r\in J+p^{r+1}A$. Therefore $p^r\in J$, which is equivalent to \eqref{e:unit_generation}.
 \end{proof}
 
 \begin{cor}   \label{c:open-closed subschemes}
Let $\varphi :A\to B$ be as in Lemma~\ref{l:open-closed subschemes}. Let $f:A^l\otimes\BQ\to A^m\otimes\BQ$ be an $A$-module homomorphism. Let 
$f_B:B^l\otimes\BQ\to B^m\otimes\BQ$ be the base change of $f$. If $\Coker f_B$ is a projective $(B\otimes\BQ )$-module then $\Coker f$ is a projective $(A\otimes\BQ )$-module. 
 \end{cor}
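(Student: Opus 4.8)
The plan is to detect projectivity through Fitting ideals and to deduce the statement from \lemref{l:open-closed subschemes} applied to each of them. Write $R:=A\otimes\BQ$ and $R_B:=B\otimes\BQ$, and set $M:=\Coker f$, a finitely presented $R$-module, whose base change is $M_B=\Coker f_B$, a finitely presented $R_B$-module. I will use the standard criterion that a finitely presented module over a commutative ring is projective (equivalently, finite locally free) if and only if each of its Fitting ideals is generated by an idempotent. Here the Fitting ideal $\on{Fitt}_i(M)\subseteq R$ is generated by the $(m-i)\times(m-i)$ minors of the matrix representing $f$, so it is a finitely generated ideal; the same applies to $\on{Fitt}_i(M_B)$.

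The two structural facts I need are both standard. First, Fitting ideals commute with base change, so the ideal generated by $\on{Fitt}_i(M)$ inside $R_B$ is exactly $\on{Fitt}_i(M_B)$. Second, since $M_B$ is assumed projective, the criterion above tells us that each $\on{Fitt}_i(M_B)$ is generated by an idempotent of $R_B$. With these in hand I apply \lemref{l:open-closed subschemes} with $I_A:=\on{Fitt}_i(M)$ and $I_B:=\on{Fitt}_i(M_B)$: its hypotheses hold because $A/p^rA\to B/p^rB$ is faithfully flat, $I_B$ is the ideal generated by $I_A$, and $I_B$ is generated by an idempotent. The lemma then yields that each $\on{Fitt}_i(M)$ is generated by an idempotent of $R$, and the criterion gives that $M=\Coker f$ is projective.

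I expect no serious obstacle in the corollary itself: the base-change formula for Fitting ideals and the Fitting-ideal criterion for projectivity are routine commutative algebra (the criterion follows, prime by prime, from the characterization of local freeness by vanishing and unit Fitting ideals). The only genuine content is the descent of idempotent-generation along $\varphi$, and this has already been isolated and proved in \lemref{l:open-closed subschemes}, where the faithful flatness of $A/p^rA\to B/p^rB$ does the work. Thus, once the Fitting-ideal reformulation is set up, the argument is immediate.
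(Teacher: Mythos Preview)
Your proposal is correct and is essentially the same as the paper's proof: the paper simply says ``for each $d$ apply \lemref{l:open-closed subschemes} to the ideal of $A\otimes\BQ$ generated by all minors of $f$ of order $d$,'' which is exactly your Fitting-ideal argument with the criterion left implicit. Your write-up just unpacks the projectivity criterion that the paper takes for granted.
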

 
 \begin{proof}
For each $d$ apply Lemma~\ref{l:open-closed subschemes} to the ideal of $A\otimes\BQ$ generated by all minors of $f$ of order $d$.
 \end{proof}

\begin{lem}    \label{l:bundles as fg modules}
For every $\sY\in\qFSch\,$, the canonical functor
\begin{equation}   \label{e:obvious functor}
\QCoh^\flat_{\fin}(\sY)\otimes\BQ \to\{\mbox{all } (\cO_\sY\otimes\BQ)\mbox{-modules}\}
\end{equation}
is fully faithful, and its essential image contains $\Bun_\BQ (\sY )$.
\end{lem}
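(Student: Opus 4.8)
The plan is to prove the two assertions separately, reducing each to an affine statement and then globalizing by means of the finite covers afforded by quasi-compactness and quasi-separatedness. For full faithfulness, recall that the functor \eqref{e:obvious functor} sends $\cF$ to the sheaf $\cF_\infty\otimes\BQ$ of \S\ref{sss:bad&good news}, so the claim is that
\[
\Hom_{\QCoh^\flat_{\fin}(\sY)}(\cF,\cG)\otimes\BQ\longrightarrow\Hom_{\cO_\sY\otimes\BQ}(\cF_\infty\otimes\BQ,\cG_\infty\otimes\BQ)
\]
is bijective. On an affine $\sY=\Spf A$ this is the assertion $\Hom_A(M,N)\otimes\BQ\iso\Hom_{A\otimes\BQ}(M\otimes\BQ,N\otimes\BQ)$, where $M,N$ are the finitely generated, $p$-torsion-free $A$-modules of global sections: injectivity follows from $\BZ_p$-flatness of $N$ (a homomorphism killed by inverting $p$ has image in the $p$-torsion of $N$, hence vanishes), and surjectivity from finite generation of $M$ (clear a common power of $p$ from the denominators of the images of a finite generating set). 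To pass to a general $\sY\in\qFSch$, I would pick a finite affine cover $\{U_i\}$; quasi-separatedness makes each $U_i\cap U_j$ quasi-compact, hence a finite union of affines. Both sides of the display are then the equalizer of the resulting finite diagram of affine Hom-groups, and since $-\otimes\BQ$ is exact it commutes with these finite limits; comparing the two equalizers via the affine case gives the bijection.

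For the essential image I would first record a formal consequence of full faithfulness together with \propref{p:Karoubian}: the essential image of \eqref{e:obvious functor} is stable under passage to direct summands in $(\cO_\sY\otimes\BQ)$-modules. Indeed, an idempotent endomorphism of $\cF_\infty\otimes\BQ$ is the image of a (unique) idempotent $e\in\End(\cF)\otimes\BQ$, and $e$ splits in the Karoubian category $\QCoh^\flat_{\fin}(\sY)\otimes\BQ$. Now take $P\in\Bun_\BQ(\sY)$ and a finite affine cover, and set $\sU:=\bigsqcup_i U_i$, a finite disjoint union of affines, hence itself affine, with $\sU\to\sY$ faithfully flat and quasi-compact. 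Over the affine $\sU$ the object $P|_\sU$ is a direct summand of a finite free $(\cO_\sU\otimes\BQ)$-module by \propref{p:global projectivity}; by the stability just noted it therefore lies in the essential image, say $P|_\sU\cong F(\cG)$ with $\cG\in\QCoh^\flat_{\fin}(\sU)\otimes\BQ$.

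It remains to descend $\cG$ along $\sU\to\sY$. The tautological descent datum on $P|_\sU$ transports, through full faithfulness applied over the terms of the \v{C}ech nerve $\sU_\bullet$, to a descent datum on $\cG$, i.e.\ an object of $\Tot\bigl(\QCoh^\flat_{\fin}(\sU_\bullet)\otimes\BQ\bigr)$. By \corref{c:descent for QCohQ} the functor $\QCoh^\flat_{\fin}(\sY)\otimes\BQ\to\Tot\bigl(\QCoh^\flat_{\fin}(\sU_\bullet)\otimes\BQ\bigr)$ is an equivalence, so $\cG$ comes from some $\cF\in\QCoh^\flat_{\fin}(\sY)\otimes\BQ$; since \eqref{e:obvious functor} is compatible with restriction and isomorphisms of sheaves of modules satisfy Zariski descent, one concludes $F(\cF)\cong P$, so $P$ is in the essential image.

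The step I expect to be the main obstacle is the globalization of full faithfulness: the whole argument hinges on $-\otimes\BQ$ commuting with the limits that compute the Hom-sheaves, and this is precisely where the quasi-compact and quasi-separated hypotheses enter, since they reduce everything to finite covers, over which localization (being exact) preserves the relevant finite limits.
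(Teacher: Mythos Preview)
Your approach is essentially the same as the paper's: both use \propref{p:Karoubian} for the case of a global direct summand of a free module, and then \corref{c:descent for QCohQ} to reduce the general case to that one. The paper's proof is two lines: ``Full faithfulness is clear''; for the essential image, if $M$ is a direct summand of $(\cO_\sY\otimes\BQ)^n$ use \propref{p:Karoubian}, and ``the general case follows by \corref{c:descent for QCohQ}''.

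There is, however, a genuine circularity in your argument. You invoke \propref{p:global projectivity} to conclude that $P|_\sU$ is a direct summand of a finite free module. In the paper's logical order, \propref{p:global projectivity} is established only \emph{after} the present lemma, as a consequence of \lemref{l:descent for Bun_Q}; and the very statement of \lemref{l:descent for Bun_Q} presupposes the embedding $\Bun_\BQ(\sY)\hookrightarrow\QCoh^\flat_{\fin}(\sY)\otimes\BQ$ furnished by the lemma you are trying to prove. So you may not cite \propref{p:global projectivity} here.

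The repair is immediate and is what the paper does implicitly: rather than taking an arbitrary finite affine cover and then appealing to \propref{p:global projectivity}, use the \emph{definition} of $\Bun_\BQ(\sY)$ (finitely generated locally projective) together with quasi-compactness to choose from the start a finite cover $\{U_i\}$ by open affines on which $P|_{U_i}$ is already a direct summand of some $(\cO_{U_i}\otimes\BQ)^{n_i}$. Your Karoubian-closure argument then applies directly on each $U_i$, and the descent step via \corref{c:descent for QCohQ} goes through unchanged.

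Your globalization of full faithfulness via finite covers and exactness of $-\otimes\BQ$ is fine and is not the obstacle you anticipated; the paper treats it as evident.
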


\begin{proof}   
Full faithfulness is clear. Let us show that every object $M\in\Bun_\BQ (\sY )$ belongs to the essential image of \eqref{e:obvious functor}. If $M$ is a direct summand of 
$(\cO_\sY\otimes\BQ)^n$ this is true by Proposition~\ref{p:Karoubian}. The general case follows by Corollary~\ref{c:descent for QCohQ}.
\end{proof}

Lemma~\ref{l:bundles as fg modules} provides a fully faithful embedding
$\Bun_\BQ (\sY )\mono\QCoh^\flat_{\fin}(\sY)\otimes\BQ$.
It commutes with pullbacks.

The following lemma implies Propositions \ref{p:global projectivity} and  \ref{p:descent for Bun_Q}.

\begin{lem}    \label{l:descent for Bun_Q}
Let $\sY,\sZ\in\FSch$ be affine and $f:\sY\to\sZ$ a faithfully flat morphism. Let $M\in\QCoh^\flat_{\fin}(\sZ)\otimes\BQ$. Suppose that $f^*M\in\Bun_\BQ (\sY )$. Then $M$ is a direct summand of $\cO_\sZ^n\otimes\BQ$ for some $n$. 
\end{lem}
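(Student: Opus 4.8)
The plan is to reduce the statement to a question about Fitting ideals and to carry out the descent using Lemma~\ref{l:open-closed subschemes} alone, so that I never have to assume $A\otimes\BQ\to B\otimes\BQ$ is flat (the paper's conventions only grant faithful flatness modulo each $p^r$). Write $\sZ=\Spf A$ and $\sY=\Spf B$, so $A,B$ are $\BZ_p$-flat and $p$-adically complete and $A/p^rA\to B/p^rB$ is faithfully flat for every $r$. By Lemma~\ref{l:bundles as fg modules} the object $M$ is represented by a finitely generated $p$-torsion-free $A$-module $M_0$, say with $m$ generators, and $M=M_0[1/p]$ is a finitely generated $(A\otimes\BQ)$-module. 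Since $M_0$ is finitely generated over the complete ring $A$, its completed pullback agrees with the ordinary one, so $f^*M=M\otimes_{A\otimes\BQ}(B\otimes\BQ)$; the hypothesis $f^*M\in\Bun_\BQ(\sY)$ says this is a finitely generated locally projective $(B\otimes\BQ)$-module. The goal is to show that $M$ is finitely generated locally projective over $A\otimes\BQ$, since a finitely generated projective module is automatically a direct summand of a finite free one.

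The mechanism is exactly that of Corollary~\ref{c:open-closed subschemes}, but applied to $M$ itself rather than to a chosen finite presentation; this is legitimate because Fitting ideals are defined for any finitely generated module. For $0\le d\le m$ let $\on{Fitt}_d(M)\subset A\otimes\BQ$ be the $d$-th Fitting ideal of $M$, generated by the appropriate minors of an arbitrary (possibly infinite) presentation. Fitting ideals commute with base change, so $\on{Fitt}_d(f^*M)=\on{Fitt}_d(M)\cdot(B\otimes\BQ)$. Since $f^*M$ is finitely generated and locally projective, it is finitely presented (being finitely generated and, locally on the quasi-compact space $\Spec(B\otimes\BQ)$, a direct summand of a finite free module), its rank is locally constant, and the resulting finite clopen decomposition of $\Spec(B\otimes\BQ)$ shows that each $\on{Fitt}_d(f^*M)$ is generated by an idempotent. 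Now Lemma~\ref{l:open-closed subschemes}, applied with $I_A=\on{Fitt}_d(M)$ and $I_B=\on{Fitt}_d(f^*M)$, descends this: each $\on{Fitt}_d(M)$ is generated by an idempotent of $A\otimes\BQ$.

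It remains to deduce local projectivity of $M$ from the fact that all its Fitting ideals are idempotent-generated. The chain $\on{Fitt}_0(M)\subset\cdots\subset\on{Fitt}_m(M)=A\otimes\BQ$ of idempotent ideals yields orthogonal idempotents $\varepsilon_0,\dots,\varepsilon_m$ summing to $1$, hence a finite product decomposition $A\otimes\BQ=\prod_{r=0}^m R_r$ with $\on{Fitt}_{r-1}(M\otimes R_r)=0$ and $\on{Fitt}_r(M\otimes R_r)=R_r$. The second condition says $M\otimes R_r$ is locally generated by $r$ elements; on an affine open where this holds, the kernel of a surjection $R_r^{\,r}\to M\otimes R_r$ has all of its coordinates in $\on{Fitt}_{r-1}(M\otimes R_r)=0$ and therefore vanishes, so $M\otimes R_r$ is Zariski-locally free of rank $r$. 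Thus $M$ is finitely generated and locally projective over $A\otimes\BQ$, and splitting a surjection $(A\otimes\BQ)^m\to M$ exhibits $M$ as a direct summand of $\cO_\sZ^m\otimes\BQ$, as required.

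I expect the main obstacle to be organizing the argument so that flatness of $A\otimes\BQ\to B\otimes\BQ$ is never invoked: the naive route, which would descend a finite presentation of $M$ from $B\otimes\BQ$ to $A\otimes\BQ$, requires genuine faithful flatness after inverting $p$, and this is not available under the paper's notion of flat morphism. Routing everything through the idempotent descent of Lemma~\ref{l:open-closed subschemes}, which uses only the faithful flatness of $A/p^rA\to B/p^rB$, sidesteps the issue. The two technical verifications to be careful about are the identification $f^*M=M\otimes_{A\otimes\BQ}(B\otimes\BQ)$ (needed so that Fitting ideals base-change correctly) and the claim that a finitely generated locally projective module over $B\otimes\BQ$ has idempotent-generated Fitting ideals.
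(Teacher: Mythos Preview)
Your strategy is essentially the paper's: both descend projectivity via the idempotent/Fitting-ideal mechanism of Lemma~\ref{l:open-closed subschemes} (the paper packages this as Corollary~\ref{c:open-closed subschemes}). The difference is that the paper first invokes Proposition~\ref{p:descent of finiteness} to show that the kernel $N$ of a surjection $(\cO_\sZ\otimes\BQ)^m\twoheadrightarrow M$ again lies in $\QCoh^\flat_{\fin}(\sZ)\otimes\BQ$, thereby producing a genuine \emph{finite presentation} of $M$, and only then runs the Fitting argument. You try to skip this and apply the Fitting argument directly to the merely finitely generated module~$M$.

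That shortcut has a gap precisely at the point you flag. In the paper's formalism, $f^*M_0\in\QCoh^\flat(\sY)$ corresponds to the $p$-adic \emph{completion} of $M_0\otimes_A B$, not to $M_0\otimes_A B$ itself, and your justification (``$M_0$ is finitely generated over the complete ring $A$'') does not show these coincide. The obstruction is $\bigcap_r p^r(M_0\otimes_A B)$: this submodule is $p$-torsion-free and $p$-divisible, hence is unchanged by inverting $p$, and under the paper's hypotheses ($A\to B$ only $p$-completely faithfully flat, $B$ not assumed Noetherian) there is no reason for it to vanish. If it does not, the natural map $M\otimes_{A\otimes\BQ}(B\otimes\BQ)\to f^*M$ is a proper surjection, so $\on{Fitt}_d(M)\cdot(B\otimes\BQ)$ may be strictly smaller than $\on{Fitt}_d(f^*M)$, and you cannot infer that it is idempotent-generated. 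The paper's detour through Proposition~\ref{p:descent of finiteness} is exactly what repairs this: once one has an explicit map $\varphi:(\cO_\sZ\otimes\BQ)^l\to(\cO_\sZ\otimes\BQ)^m$ with cokernel $M$, the base change $\varphi_B$ is a map of \emph{free} modules (so unambiguous), and one checks---using only that the completion map is surjective on finitely generated modules---that the honest cokernel of $\varphi_B$ really is $f^*M$, so Corollary~\ref{c:open-closed subschemes} applies.
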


\begin{proof}   
Let $M_0\in\QCoh^\flat_{\fin}(\sZ)$ be a representative for $M$. Since $\sZ$ is affine, $M_0$ is generated by finitely many global sections, so we get an exact sequence
\[
0\to N_0\to\cO_{\sZ}^m\to M_0\to 0
\]
in $\QCoh^\flat(\sZ)$. Let $N\in\QCoh^\flat(\sZ)\otimes\BQ$ correspond to $N_0$. Then 
$$f^*N\in\Bun_{\BQ}(\sY)\subset\QCoh^\flat_{\fin}(\sY)\otimes\BQ,$$ 
so 
$N\in\QCoh^\flat_{\fin}(\sZ)$ by Proposition~\ref{p:descent of finiteness}. Thus we have an exact sequence
\[
(\cO_{\sZ}\otimes\BQ)^l\overset{\varphi}\longrightarrow (\cO_{\sZ}\otimes\BQ)^m\to M\to 0
\]
inducing an exact sequence $(\cO_{\sY}\otimes\BQ)^l\to (\cO_{\sY}\otimes\BQ)^m\to f^*M\to 0$. It remains to apply Corollary~\ref{c:open-closed subschemes}.
\end{proof}

\appendix

\section{The isomorphism between $W(X_{\perf})/\sG$ and the prismatization of~$X$}     \label{appendix}
\subsection{The goal}
\emph{Prismatization} is a certain functor $X\mapsto\WCart_X$ from the category of $p$-adic formal schemes to the category of stacks, see \cite{BL} or  
\cite[\S 1]{Prismatization}. Let us note that in \cite{Prismatization} this functor is denoted by $X\mapsto X^\prism$.

The stack $\WCart_X$ can always be presented (in many different ways) as a quotient of a formal scheme by a flat groupoid. The goal of this Appendix is to deduce from 
\cite{BL} that if $X$ is a Frobenius-smooth $\BF_p$-scheme then $\WCart_X$ has a \emph{canonical} presentation of this type; namely, one has a canonical isomorphism 
\begin{equation}    \label{e:WCart_X=W(X_perf)/sG}
\WCart_X\iso W(X_{\perf})/\sG,
\end{equation}
 as promised in Remark~\ref{r:prismatization}.

\subsection{Prismatization of semiperfect $\BF_p$-schemes}
\subsubsection{Perfect case}   \label{sss:Perfect case} 
According to \cite[Example~3.12]{BL}, for any perfect $\BF_p$-scheme $X$ one has a canonical isomorphism $W(X)\iso\WCart _X$, where
$W(X)$ is the $p$-adic formal scheme whose underlying topological space is that of $X$ and whose structure sheaf is obtained by applying to 
$\cO_X$ the functor of $p$-typical Witt vectors. 

\subsubsection{General case}   \label{sss:General case} 
More generally, for any semiperfect $\BF_p$-scheme $X$ one has a canonical isomorphism $A_{\cris}(X)\iso\WCart_X$, where $A_{\cris}(X)$ is the $p$-adic formal scheme whose underlying topological space is that of $X$ and whose structure sheaf is obtained by applying Fontaine's functor $A_{\cris}$ to $\cO_X$. 
This is \cite[Lemma~6.1]{BL} in the case that $X$ is regular semiperfect (the only one that we need) and  \cite[Cor.~7.18]{BL} for arbitrary semiperfect schemes. 

In these statements from \cite{BL} $A_{\cris}$ is not mentioned explicitly. Instead, it is proved that if $X$ is affine then $\WCart_X$ is the $\Spf$ of the prismatic cohomology of $X$. But it is known from \cite[\S 5]{BS} that the prismatic cohomology of an $\BF_p$-scheme identifies with its crystalline cohomology; on the other hand, if $R$ is a semiperfect $\BF_p$-algebra then the crystalline cohomology of $\Spec R$ equals $A_{\cris} (R)$ (in particular, it lives in degree zero\footnote{In general, the ``true'' objects are the \emph{derived} crystalline cohomology of $\Spec R$ and the \emph{derived} version of $A_{\cris} (R)$. They are canonically isomorphic and live in nonpositive degrees; if $R$ is \emph{regular} semiperfect then they live in degree zero and are equal to their classical prototypes.}).

\subsection{The morphism $W(X_{\perf})\to \WCart_X$}
By \ref{sss:Perfect case}, 
For any $\BF_p$-scheme $X$, the morphism 
\begin{equation}  \label{e:X_perf to X}
X_{\perf}\to X
\end{equation}
induces a morphism
\begin{equation}  \label{e:W(X_perf to WCart (X)}
W(X_{\perf})=\WCart_{X_{\perf}}\to\WCart_X
\end{equation}
(we have used \S\ref{sss:Perfect case}). 

From now on, suppose that $X$ is Frobenius-smooth in the sense of \S\ref{ss:morally smooth}. Then \eqref{e:X_perf to X} is a quasi-syntomic cover, so by \cite[Lemma~6.3]{BL},
the map \eqref{e:W(X_perf to WCart (X)} is  an fpqc cover (by this we mean that for every scheme $S$, every morphism $S\to \WCart_X$ lifts to a morphism 
$S\to W(X_{\perf})$ fpqc-locally on $S$).

\subsection{The  \v {C}ech nerve of \eqref{e:W(X_perf to WCart (X)}}
By definition, $\sG$ is the groupoid acting on $W(X_{\perf})$ whose \v {C}ech nerve is the simplical formal scheme $\Acris_{\bullet}\,$ from \S\ref{sss:Acris}.
So to finish constructing the isomorphism \eqref{e:WCart_X=W(X_perf)/sG}, it remains to identify the \v {C}ech nerve of $\eqref{e:W(X_perf to WCart (X)}$ with $\Acris_{\bullet}\,$.

Since $X$ is Frobenius-smooth, the morphism \eqref{e:X_perf to X} is flat. So by  \cite[Rem.~3.5]{BL}, the \v {C}ech nerve of $\eqref{e:W(X_perf to WCart (X)}$ is
$\WCart _{\Pow_{\bullet}}\,$, where $\Pow_{\bullet}$ is the \v {C}ech nerve of \eqref{e:X_perf to X}. By \S\ref{sss:General case}, we have
$\WCart_{\Pow_{\bullet}}=A_{\cris}(\Pow_{\bullet})$. Finally,  $A_{\cris}(\Pow_{\bullet})=:\Acris_{\bullet}\,$.

\bibliographystyle{alpha}

\end{document}